\renewcommand\thefigure{\thesection.\@arabic\c@figure}
\renewcommand\thetable{\thesection.\@arabic\c@table}
\newtheorem{theorem}{Theorem}[section]
\newtheorem{lemma}[theorem]{Lemma}
\newtheorem{proposition}[theorem]{Proposition}
\newtheorem{corollary}[theorem]{Corollary}
\newtheorem{remark}[theorem]{Remark} 
\newcommand{\ms}[1]{{\mathscr #1}}
\newcommand{\mc}[1]{{\mathcal #1}}
\newcommand{\mf}[1]{{\mathfrak #1}}
\newcommand{\mb}[1]{{\mathbf #1}}
\newcommand{\bb}[1]{{\mathbb #1}}
\newcommand{\X}{\bb{X}}
\newcommand{\HH}{\bb{H}}
\renewcommand{\Cap}{{\rm cap}}
\DeclareMathOperator{\diam}{diam}
\DeclareMathOperator{\dist}{dist}
\DeclareMathOperator{\Var}{Var}
\def\clap#1{\hbox to 0pt{\hss#1\hss}}
\begin{document}
\title{Universality of trap models in the ergodic time scale}

\author{M. Jara, C. Landim, A. Teixeira}

\begin{abstract}
  Consider a sequence of possibly random graphs $G_N=(V_N, E_N)$,
  $N\ge 1$, whose vertices's have i.i.d. weights $\{W^N_x : x\in V_N\}$
  with a distribution belonging to the basin of attraction of an
  $\alpha$-stable law, $0<\alpha<1$. Let $X^N_t$, $t \ge 0$, be a
  continuous time simple random walk on $G_N$ which waits a
  \emph{mean} $W^N_x$ exponential time at each vertex $x$. Under
  considerably general hypotheses, we prove that in the ergodic time
  scale this trap model converges in an appropriate topology to a
  $K$-process. We apply this result to a class of graphs which
  includes the hypercube, the $d$-dimensional torus, $d\ge 2$, random
  $d$-regular graphs and the largest component of super-critical
  Erd\"os-R\'enyi random graphs.
\end{abstract} 

\address{\noindent IMPA, Estrada Dona Castorina 110, CEP 22460 Rio de
  Janeiro, Brasil
\newline
e-mail: \rm \texttt{mjara@impa.br}}

\address{\noindent IMPA, Estrada Dona Castorina 110, CEP 22460 Rio de
  Janeiro, Brasil and CNRS UMR 6085, Universit\'e de Rouen, Avenue de
  l'Universit\'e, BP.12, Technop\^ole du Madril\-let, F76801
  Saint-\'Etienne-du-Rouvray, France.  \newline e-mail: \rm
  \texttt{landim@impa.br} }

\address{\noindent IMPA, Estrada Dona Castorina 110, CEP 22460 Rio de
  Janeiro, Brasil  \newline e-mail: \rm
  \texttt{augusto@impa.br} }

\keywords{Trap models, scaling limit, metastability} 

\maketitle

\section{Introduction}
\label{sec0}

Trap models were introduced to investigate aging, a nonequilibrium
phenomenon of considerable physical interest \cite{lsnb1, bou1,
  ck1}. These trap models are defined as follows: consider an
unoriented graph $G = (V,E)$ with finite degrees and a sequence of
i.i.d.\! strictly positive random variables $\{W_z : z\in V\}$ indexed
by the vertices.  Let $\{X_t : t\ge 0\}$ be a continuous-time random
walk on $V$ which waits a mean $W_z$ exponential time at site $z$, at
the end of which it jumps to one of its neighbors with uniform
probability.

The expected time spent by the random walk on a vertex $z$ is
proportional to the value of $W_z$. It is thus natural to regard the
environment $W$ as a landscape of valleys or traps with depth given by
the values of the random variables $\{W_z : z\in V\}$.  As the random
walk evolves, it explores the random landscape, finding deeper and
deeper traps, and aging appears as a consequence of the longer and
longer times the process remains at the same vertex.

Since \cite{bou1}, aging has been observed in short time scales in
many trap models \cite{bbg, bc4, bcm1, bc1, bbc}.  The investigation
of trap models in the time scale in which the deepest traps are
visited was started in \cite{fin, fm1}, where the authors examined the
asymptotic behavior of the random walk among traps in $\bb Z$ and in
the complete graph. In the first case the random walk converges to a
degenerated diffusion, while in the second it converges to the
$K$-process, a continuous-time, Markov dynamics on $\bb N$ which hits
any finite subset $A$ of $\bb N$ with uniform distribution. This
latter result was extended in \cite{fli1} to the hypercube and in
\cite{jlt1} to the $d$-dimensional torus, $d\ge 2$.

In the present paper, we exhibit simple conditions that imply the
convergence to the $K$-process in the scaling limit. Our conditions
are general enough to include the hypercube and the torus, as well as
random $d$-regular graphs and the largest component of the
super-critical Erd\"os-R\'enyi random graphs. These are good examples
to keep in mind throughout the text.

Let $\{G_N : N\ge 1\}$, $G_N = (V_N, E_N)$, be a sequence of possibly
random, finite, connected graphs defined on a probability space
$(\Omega, \mc F, \bb P)$, where $V_N$ represents the set of vertices
and $E_N$ the set of unoriented edges. Assume that the number of
vertices, $|V_N|$, converges to $+\infty$ in $\bb P$-probability.

Assume that on the same probability space $(\Omega, \mc F, \bb P)$, we
are given an i.i.d collection of random variables $\{W^N_j : j\ge
1\}$, $N \geq 1$, independent of the random graph $G_N$ and whose
common distribution belongs to the basin of attraction of an
$\alpha$-stable law, $0<\alpha <1$. Hence, for all $N\ge 1$ and $j\ge
1$,
\begin{equation}
\label{13}
\bb P[W^N_1>t] \;=\; \frac{L(t)}{t^\alpha}\;, \quad t>0\;,
\end{equation}
where $L$ is a slowly varying function at infinity. 

For each $N\ge 1$, re-enumerate in decreasing order the weights
$W^N_1, \dots , W^N_{|V_N|}$: $\hat W^N_j = W^N_{\sigma(j)}$, $1\le
j\le |V_N|$ for some permutation $\sigma$ of the set $\{1, \dots,
|V_N|\}$ and $\hat W^N_j \ge \hat W^N_{j+1}$ for $1\le j < |V_N|$. Let
$(x^N_1, \dots, x^N_{|V_N|})$ be a random enumeration of the vertices
of $G_N$ and define $W^N_{x^N_j} = \hat W^N_j$, $1\le j\le |V_N|$,
turning $G_N = (V_N, E_N, W^N)$ into a finite, connected,
vertex-weighted graph.

Consider for each $N \geq 1$, a continuous-time random walk
$\{X^N_t : t\ge 0\}$ on $V_N$, which waits a mean $W^N_x$
exponential time at site $x$, after which it jumps to one of its neighbors
with uniform probability. The generator $\mc L_N$ of this walk is given by:
\begin{equation}
\label{f05}
(\mc L_N f)(x) \;=\; \frac 1{\deg(x)} \, \frac 1{W^N_x} \, 
\sum_{y\sim x} [f(y) - f(x)]
\end{equation}
for every $f:V_N\to\bb R$, where $y\sim x$ means that $\{x,y\}$
belongs to the set of edges $E_N$ and where $\deg(x)$ stands for the
degree of $x$: $\deg(x) = \# \{y\in V_N : y\sim x\}$. 

\medskip\noindent{\bf Heuristics.}  The main results of this article
assert that, under fairly general conditions on the graph sequence
$G_N$, the random walk $X^N_t$ converges in the ergodic time scale to
a $K$-process. Let us now give an informal description of the above
statement.

Given the graph sequence $G_N$ and the associated weights $W^N_x$,
suppose that 
\begin{enumerate}
\item A small number of sites supports most of the stationary measure
  of the process $X^N_t$, see \eqref{B0},
\end{enumerate}
and that we are able to find a sequence $\ell_N$ satisfying the
following conditions:
\begin{enumerate}
\setcounter{enumi}{1}
\item the ball $B(x, \ell_N)$ around a typical point $x$ has a volume
  much smaller than $|V_N|$, see \eqref{BB0},
\item starting outside of the above ball, the random walk `mixes'
  before hitting its center $x$, see \eqref{BB1} and
\item the graphs $G_N$ are transitive (or satisfy the much weaker
  hypothesis \eqref{e:c4}).
\end{enumerate}

Under the above conditions, we are able to show that
\begin{equation}
\label{e:main}
\text{$X^N_t$ converges to a  $K$-process,}
\end{equation}
introduced in \cite{fm1, p1}, after proper scaling, see
Theorems~\ref{t:trans} and \ref{t:random}.

Still on a heuristic level, let us give a brief explanation of why the
above conditions should imply the stated convergence. Let $M_N$ be a
sequence of integers converging to $+\infty$ slowly enough for the
balls $B(x^N_j, \ell_N)$, $1\le j \le M_N$, to be disjoint. We call
the vertices $\{x_1^N,\dots , x_{M_N}^N\}$ the {\em deep traps} and
the remaining vertices $\{x_{M_N+1}^N,\dots ,x_{|V_N|}^N\}$ the {\em
  shallow traps}. The idea is to decompose the trajectory of the
random walk in excursions between the successive visits to the balls
$B(x^N_j, \ell_N)$.

Denote by $v_{\ell_N}(x^N_j)$ the escape probability from
$x^N_j$. This is the probability that the random walk $X^N_t$ starting
from $x^N_j$ attains the boundary of the ball $B(x^N_j,\ell_N)$ before
returning to $x^N_j$. The random walk $X^N_t$ starting from $x^N_j$
visits $x^N_j$ on average $v_{\ell_N}(x^N_j)^{-1}$ times before it
escapes. After escaping, it mixes and then it reaches a new deep trap
with a distribution determined by the topology of the graph. This
distribution does not depend on the last deep trap visited because the
process has mixed before reaching the next trap. In an excursion between
two deep traps, the random walk visits only shallow traps, which
should not influence the asymptotic behaviour.

Hence, if the escape probabilities and the degrees of the random graph
have a reasonable asymptotic behavior, see \eqref{e:c4}, we expect the
random walk $X^N_t$ to evolve as a Markov process on $\{1, \dots,
M_N\}$ which waits at site $j$ a mean $W^N_{x^N_{j}}
v_{\ell_N}(x^N_j)^{-1}$ exponential time, at the end of which it jumps
to a point in $\{1, \dots, M_N\}$ whose distribution does not depend
on $j$. This latter process can be easily shown to converge to the
$K$-process, proving the main result of this article.

There are several interesting examples of random graphs which are not
considered in this article. Either because the assumptions
\eqref{B0}--\eqref{e:c4} fail or because they have not been proved
yet. We leave as open problems the asymptotic behavior of a random walk
among random traps on uniform trees on $N$ vertices, on the critical
component of an Erd\"os-R\'enyi graph, on Sierpinski carpets, on the
giant component of the percolation cluster on a torus or on the
invasion percolation cluster. \smallskip

The article is organized as follows. In the next section we give a
precise statement of our main results. In the following two sections
we present some preliminary results on hitting probabilities and
holding times of a random walk among random traps. In section
\ref{sec4} we present the topology in which the convergence to the
$K$-process takes place and in Section \ref{sec6} we construct a
coupling between the random walk and a Markov process on the set $\{1,
\dots, M\}$.  This latter process can be seen as the trace of the
$K$-process on the set $\{1, \dots, M\}$ and the coupling as the main
step of the proof. In Section \ref{sec5} we show that this latter
process converges to the $K$-process. Putting together the assertions
of Sections \ref{sec4}, \ref{sec6}, \ref{sec5} we derive in Section
\ref{sec8} a result which provides sufficient conditions for the
convergence to the $K$-process of a sequence of random walks among
random traps on deterministic graphs. We adapt this result in Section
\ref{sec7} to random pseudo-transitive graphs and in Section
\ref{sec11} to graphs with asymptotically random conductances. We show
in Section \ref{sec10} that this latter class includes the
largest component of a super-critical Erd\"os-R\'enyi graphs.

\section{Notation and results}
\label{sec1}

Recall the notation introduced in the previous section up to the
subsection Heuristics.  Denote by $\nu_N$ the unique stationary
distribution of the process $\{X^N_t: t\ge 0\}$. An elementary
computation shows that $\nu_N$ is in fact reversible and given by
\begin{equation}
\label{nu}
\nu_N(x) \;=\; \frac{\deg(x) W^N_x}{Z_N}\;,\quad x\in V_N\;,
\end{equation} 
where $Z_N$ is the normalizing constant $Z_N = \sum_{y\in V_N}
\deg(y) W^N_y$.

For a fixed graph $G_N$ and a fixed environment $\mb W=\{W^N_z : z\in
V_N\}$, denote by $\mb P^N_x=\mb P^{G_N,\mb W}_x$, $x\in V_N$, the
probability on the path space $D(\bb R_+, V_N)$ induced by the Markov
process $\{X^N_t : t\ge 0\}$ starting from $x$. Expectation with
respect to $\mb P^N_x$ is represented by $\mb E^N_x$. We denote
sometimes $X^N_t$ by $X^N(t)$ to avoid small characters.

Let $\{\X^N_n : n \geq 0\}$ be the lazy embedded discrete-time chain
in $X^N_t$, i.e., the discrete-time Markov chain which jumps from $x$
to $y$ with probability $(1/2) \deg (x)^{-1}$ if $y\sim x$ and which
jumps from $x$ to $x$ with probability $(1/2)$.  Denote by $\pi_N$ the
unique stationary, in fact reversible, distribution of the skeleton
chain, given by
\begin{equation}
\label{00}
\pi_N(x) \;=\; \frac {\deg(x)}{\sum_{y \in V_N} \deg(y)}\;\cdot
\end{equation}

For a subset $B$ of $V_N$, we denote by $H_B$ the hitting time of $B$
and by $H^+_B$ the return time to $B$:
\begin{equation*}
\begin{array}{l}
H_B \;=\; \inf \big\{t\ge 0: X^N_t \in B \big\}\;, \\
H^+_B \;=\; \inf \big\{ t\ge 0: X^N_t \in B 
\text{ and } \exists s<t \text{ s.t. }  X^N_s\not\in B \big\}\;.
\end{array}
\end{equation*}
When $B$ is a singleton $\{x\}$, we denote $H_B$, $H^+_B$ by $H_x$,
$H^+_x$, respectively. We also write $\bb H_B$ (resp. $\HH^+_B$) for
the hitting time of a set $B$ (resp. return time to $B$) for the
discrete chain $\X^N_n$.

\smallskip
\noindent{\bf $K$-processes.} To describe the asymptotic behavior of
the random walk $X^N_t$, consider two sequences of positive real
numbers $\mb u = \{u_k : k \in \mathbb{N}\}$ and $\mb Z = \{Z_k : k
\in \mathbb{N}\}$ such that
\begin{equation}
\label{07}
\sum_{k \in \mathbb{N}} Z_k \, u_k \;<\; \infty \;, \quad
\sum_{k \in \mathbb{N}} u_k \;=\; \infty\;.
\end{equation}

Consider the set $\overline{\mathbb{N}} = \bb N \cup \{\infty\}$ of
non-negative integers with an extra point denoted by $\infty$.  We
endow this set with the metric induced by the isometry
$\phi:\overline{\mathbb{N}} \to \mathbb{R}$, which sends $n \in
\overline{\mathbb{N}}$ to $1/n$ and $\infty$ to $0$. This makes the
set $\overline{\mathbb{N}}$ into a compact metric space.

In Section \ref{sec5}, based on \cite{fm1}, we construct a Markov
process on $\overline{\mathbb{N}}$, called the $K$-process with
parameter $(Z_k , u_k)$ which can be informally described as
follows. Being at $k\in \bb N$, the process waits a mean $Z_k$
exponential time, at the end of which it jumps to
$\infty$. Immediately after jumping to $\infty$, the process returns
to $\bb N$. The hitting time of any finite subset $A$ of $\bb N$ is
almost surely finite. Moreover, for each fixed $n \ge 1$, the
probability that the process hits the set $\{1, \dots, n\}$ at the
state $k$ is equal to $u_k/\sum_{1\le j\le n} u_j$. In particular, the
trace of the $K$-process on the set $\{1, \dots, n\}$ is the Markov
process which waits at $k$ a mean $Z_k$ exponential time at the end of
which it jumps to $j$ with probability $u_j/\sum_{1\le i\le n} u_i$.

\smallskip
\noindent{\bf Topology.}
Between two successive sojourns in deep traps, the random walk $X^N_t$
visits in a short time interval several shallow traps. If we want to
prove the convergence of the process $X^N_t$ to a process which visits
only the deep traps, we need to consider a topology which disregard
short excursions. With this in mind, we introduce the following topology.

Fix $T>0$. For any function $f: [0,T] \to \bb R$ and any point $t \in
[0,T]$, we say that $f$ is locally constant at $t$ if $f$ is constant
in a neighborhood of $t$. Let
\begin{equation}
\label{ec2p}
\mc C(f) = \{t \in [0,T]; f \text{ is locally constant in } t\},
\end{equation}
and $\mc D(f) = \mc C(f)^c$.  Notice that the set $\mc D(f)$ is always
closed. Let $\Lambda$ denote the Lebesgue measure in $[0,T]$ and denote by
$M_0$ the space of functions which are locally constant a.e., that is 
\begin{equation}
\label{ec3p}
M_0 \;:=\; \{f: [0,T] \to \bb R; \Lambda\big(\mc D(f)\big)=0\}\;.
\end{equation}

We say that two locally constant functions $f$ and $g\in M_0$ are
equivalent, $f \sim g$, if $f(t) =g(t)$ for any $t \notin \mc D(f)
\cup \mc D(g)$. Note that if $f \sim g$ then $f=g$ almost everywhere.
We show in Lemma \ref{l1} below that $\sim$ is an equivalence relation
in $M_0$.

Let $M = M_0/\sim$ and make the space $M$ into a metric space by
introducing the distance
\begin{equation}
\label{c02p}
d_T(f,g) \;=\; \inf_{A \in \mc B} \big\{\|f-g\|_{\infty,A^c} +
\Lambda(A)\big\}\;, 
\end{equation}
where $\mc B = \mc B([0,T])$ is the set of Borel subsets of $[0,T]$,
and $\|f-g\|_{\infty,A^c}$ stands for the supremum norm of $f-g$
restricted to $A^c$. Intuitively speaking, the distance between $f$
and $g$ is small if they are close to each other, except for a set of
small measure.

We prove in Lemma \ref{l2} that $d_T$ is well defined and that it
introduces a metric in $M$. With this metric, $M$ is separable but not
complete. 

\smallskip
\noindent{\bf Main result.} Let $\bb V=\bb V_N = |V_N|$ and let
$\Psi_N: V_N \to \{1, \dots, \bb V_N\}$ be the random function defined
by $\Psi_N(x^N_j)=j$. The first main result of this article relies on
three assumptions.  We first require the sequence of invariant
measures $\nu_N$ to be almost surely tight.  Assume that for any
increasing sequence $J_N$, with $\lim_N J_N=\infty$,
\begin{equation}
\label{B0}
\tag{\bf B0}
\lim_{N\to\infty}  \bb E \big[ \nu_N(\{x^N_1, \dots, 
x^N_{\min\{J_N, \bb V_N\}}\}^c) \big] \;=\; 0\;. 
\end{equation}

Denote by $B(x,\ell)$ the ball of radius $\ell$ centered at $x\in V_N$
with respect to the graph distance $d=d_N$ in $G_N$. Fix a sequence
$\{\ell_N : N\ge 1\}$ of positive numbers, representing the radius of
balls we place around each deep trap. Let $\mf x$ be a vertex chosen
uniformly among the vertices of $V_N$. We assume that
\begin{equation}
\tag{{\bf B1}}
\label{BB0}
\lim_{N \to \infty} \bb E \Big[ \frac{|B(\mf x, 2
  \ell_N)|}{\bb V_N}\Big]\; = \;0 \;.
\end{equation}
It follows from this condition that the number of vertices $\bb V_N$
of the graph $G_N$ diverges in probability: 
\begin{equation*}
\lim_{N \to \infty} \bb P \big[ \bb V_N \ge K \big]\; = \;0 
\end{equation*}
for every $K\ge 1$.

Let $\Vert \mu - \nu \Vert_{TV}$ be the total variation distance
between two probability measures $\mu$, $\nu$ defined on $V_N$, and
let $t_{\text{mix}} = t^N_{\text{mix}}$ be the mixing time of the
discrete chain $\{\X^N_n : n\ge 0\}$, see equation (4.33) in
\cite{LPW09}.

We assume that the typical point $\mf x$ is not hit before the mixing
time if one starts the random walk at distance at least $\ell_N$ from
$\mf x$. More precisely, we suppose that there exists an increasing
sequence $L_N$, $\lim_{N\to\infty} L_N =\infty$, such that
\begin{equation}
\tag{{\bf B2}}
\label{BB1}
\lim_{N \to \infty} \bb E\Big [
\sup_{\substack{y \not \in B(\mf x, \ell_N)}} 
\mb P_y \big[\bb H_{\mf x} \leq L_Nt_{\text{mix}} \big] \, \Big] 
\;=\;0 \;.
\end{equation}

We finally introduce the notion of pseudo-transitive graphs, which
includes the classical definition of transitive graphs but also
encompasses other important examples such as random regular graphs,
discussed in Proposition~\ref{p:dreg}. 

Consider a sequence of possibly random graphs $G_N = (V_N, E_N)$. We
say that two subsets $A$, $B$ of $V_N$ with distinguished vertices
$\mf x\in A$, $\mf y\in B$, are isomorphic, $(\mf x, A)\equiv (\mf y,
B)$, if there exists a bijection $\varphi:A\to B$ with the property
that $\varphi(\mf x) = \mf y$ and that for any $a,b \in A$, $\{a,b\}$
is an edge of $G_N$ if and only if $\{\varphi(a), \varphi(b)\}$ is an
edge of $G_N$.

Let $\mf x$, $\mf y\in V_N$ be two vertices chosen independently and
uniformly in $V_N$. We say that $G_N$ is pseudo-transitive for the
sequence $\ell_N$, if
\begin{equation}
\label{pseudo}
\lim_{N\to\infty} \bb P \big[ 
(\mf x, B(\mf x,\ell_N)) \not \equiv (\mf y, B(\mf y,\ell_N)) 
\big] \;=\; 0\;.
\end{equation}
Clearly, any sequence of transitive graphs is pseudo-transitive for
any given sequence $\ell_N$.  

For $x\in V_N$, let $v_\ell (x) = v^N_{\ell_N}(x)$ be the probability
of escape from $x$:
\begin{equation*}
v_\ell (x) \;=\; \mb P^N_x \big[ \HH_{R(x,\ell)} < \HH^+_{x} \big]\;, 
\end{equation*}
and let $\{c_k : k\ge 1\}$ be the sequence defined by
\begin{equation}
\label{aa1}
c_k^{-1} \;=\; \inf \big\{t\ge 0 : \mathbb P[W^N_1 > t] 
\le k^{-1} \big\}\;,
\end{equation}
The constant $c_N^{-1}$ represents the typical size of $\max_{1\le
  k\le N} W^N_k$, so that $c_{\bb V} W^N_{x_j}$ for fixed $j$ is of
order one.

\begin{theorem}
\label{t:trans}
Fix a sequence of pseudo-transitive graphs $G_N$ with respect to a
sequence $\ell_N$. Suppose that \eqref{B0}--\eqref{BB1} hold and that
$\Psi_N(X^N_0)$ converges in probability to some $k\in\bb N$. Then,
letting $\beta_N^{-1}= c_{\bb V} v^N_{\ell_N}(x^N_1)$, we have that
\begin{equation*}
(c_{\bb V} \mb W^N, \Psi_N(X^N_{t \beta_N})) \quad\text{converges weakly
  to}\quad  (\mb w, K_t)\;,
\end{equation*}
where the sequence $\mb w =(w_1, w_2, \dots)$ is defined in
\eqref{10} and where for each fixed $\mb w$, $K_t$ is a $K$-process
with parameter $(\mb w,1)$ starting from $k$. In the convergence, we
adopted $L^1(\bb N)$ topology in the first coordinate and
$d_T$-topology in the second.
\end{theorem}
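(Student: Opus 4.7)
The plan is to reduce the convergence to the framework developed in Sections~\ref{sec4}--\ref{sec8} and then use pseudo-transitivity to identify the limit as a $K$-process with parameter $(\mb w, 1)$. The convergence of the first coordinate, $c_{\bb V} \mb W^N \to \mb w$ in $L^1(\bb N)$, is a standard extreme value result for i.i.d.\ weights whose tail satisfies \eqref{13}: the rescaled ordered weights converge to the ranked points of a Poisson process on $(0,\infty)$ with intensity $\alpha x^{-\alpha-1}\,dx$, and \eqref{aa1} supplies the correct normalization. The joint convergence is obtained by conditioning on the environment and the graph and then working pathwise in the second coordinate.

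For the second coordinate, I fix an integer $M\ge 1$ and analyse the trace of $\Psi_N(X^N_{t\beta_N})$ on $\{1,\dots,M\}$. The coupling of Section~\ref{sec6} identifies this trace, up to an error that is negligible in the $d_T$-topology, with an explicit continuous-time Markov chain $\xi^{N,M}$ on $\{1,\dots,M\}$: at state $j$ it waits a mean $W^N_{x^N_j}/v^N_{\ell_N}(x^N_j)$ exponential time (the value of $W^N_{x^N_j}$ multiplied by the mean number of returns to $x^N_j$ before escape from $B(x^N_j,\ell_N)$), and then jumps with a distribution $q^N_j$ on $\{1,\dots,M\}$ that records the deep trap hit after escape. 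Assumption \eqref{BB1} forces the walk to mix after escape before hitting any deep trap, so $q^N_j$ is asymptotically close to a law that depends only on the hitting statistics of $\{x^N_i : i \le M\}$ from a $\pi_N$-distributed start, and not on the origin $j$. Pseudo-transitivity \eqref{pseudo} then forces all the escape probabilities $v^N_{\ell_N}(x^N_i)$ for $i\le M$ to be asymptotically equal to $v^N_{\ell_N}(x^N_1)$, so that after speeding up by $\beta_N$ the mean holding time at $j$ becomes $c_{\bb V} W^N_{x^N_j}\to w_j$; the same symmetry also makes $q^N_j$ asymptotically uniform on $\{1,\dots,M\}$, independently of $j$.

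The limiting chain on $\{1,\dots,M\}$ thus has mean holding time $w_k$ at state $k$ and uniform jump distribution, which is precisely the trace on $\{1,\dots,M\}$ of the $K$-process with parameter $(\mb w, 1)$. Assumption \eqref{B0} controls the invariant mass of the shallow traps, ensuring that the set of times $t$ at which $\Psi_N(X^N_{t\beta_N})>M$ has Lebesgue measure in $[0,T]$ that vanishes as $M\to\infty$ uniformly in $N$; this is exactly the type of discrepancy absorbed by the metric $d_T$, and it allows us to interchange the limits $N\to\infty$ and $M\to\infty$ and recover the full $K$-process via the construction of Section~\ref{sec5}. The main technical obstacle is converting the qualitative isomorphism in \eqref{pseudo} into a quantitative comparison of the kernels $q^N_j$: one must show that random walks started from isomorphic balls $B(\mf x,\ell_N)\equiv B(\mf y,\ell_N)$ produce, after exit and mixing, essentially identical hitting distributions on $\{x^N_1,\dots,x^N_M\}$, so that pseudo-transitivity of the geometry actually propagates to uniformity of the jump kernel.
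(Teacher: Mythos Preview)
Your overall picture is right—pseudo-transitivity equalizes escape probabilities and degrees, \eqref{BB1} decouples successive deep traps, and \eqref{B0} controls the shallow mass—but the organization differs from the paper's and one step does not go through as written.

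The paper does \emph{not} fix $M$ and then send $M\to\infty$. It extracts from \eqref{B0}--\eqref{BB1} and \eqref{pseudo} a single slowly growing sequence $M_N\uparrow\infty$ for which the balls $B(x^N_j,\ell_N)$, $j\le M_N$, are disjoint and pairwise isomorphic (so all $v_\ell(x^N_j)$ and $\deg(x^N_j)$ literally coincide) and $M_N^3\kappa_N\to0$. After a subsequence argument to make the corresponding good events $\Sigma^1_N,\dots,\Sigma^4_N$ hold almost surely, it verifies the hypotheses of Theorem~\ref{s13} with $\gamma_N=[v_\ell(x^N_1)\deg(x^N_1)]^{-1}$ and $(Z_j,u_j)=(w_j,1)$; that theorem then delivers the $K$-process limit in one stroke. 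Incidentally, this dissolves the ``main technical obstacle'' you flag at the end: on the isomorphism event the escape probabilities and degrees are \emph{equal}, so $\rho_N$ in \eqref{rho2} is exactly uniform on $A_N$ and no comparison of kernels from different starting balls is needed.

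Your fixed-$M$ route is a legitimate alternative strategy, but the sentence ``the coupling of Section~\ref{sec6} identifies this trace, up to an error that is negligible in the $d_T$-topology'' is where it breaks. Theorem~\ref{dXYf} and Proposition~\ref{dXY} are proved under (A0)--(A3), which are formulated for $M_N\to\infty$: in particular \eqref{e:c2} and \eqref{B2}--\eqref{B4} require $\nu_N(A_N^c)\to0$, and this fails for $A_N=\{x^N_1,\dots,x^N_M\}$ with $M$ fixed, since the limiting time fraction spent outside $A_N$ is $\sum_{j>M}w_j/\sum_j w_j>0$. So Section~\ref{sec6} cannot be invoked as a black box here; you would have to redo the coupling estimates for fixed $M$, carry a residual $d_T$-error of order $\epsilon(M)$ uniform in $N$, and then perform a separate $M\to\infty$ limit together with the convergence of the $K$-process trace on $\{1,\dots,M\}$ to the full $K$-process. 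That can be made rigorous, but it is more work than the paper's diagonal choice of $M_N$, which absorbs all such errors already in the single $N\to\infty$ limit.
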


It is not difficult to show from the definition of the random sequence
$\mb w =(w_1, w_2, \dots)$ that $w_1$ has a Fr\'echet distribution.
In Section~\ref{sec7}, we apply Theorem \ref{t:trans} to the
hypercube, the $d$-dimensional torus, $d \geq 2$, and to a sequence of
random $d$-regular graphs, $d \geq 3$.  \medskip

The second main result of the article concerns graphs in which the
assumption \eqref{pseudo} of isometry of neighborhoods is replaced by
an asymptotic independence and a second moment bound. 

Assume that there exists a coupling $\mc Q_N$ between the random graph
$\{G_N : N\ge 1\}$ and a sequence of i.i.d random vectors $\{(D_k,
E_k) : k\ge 1\}$ such that for every $K\ge 1$ and $\delta>0$,
\begin{equation}
\tag{{\bf B3}}
\label{e:c4}
\begin{split}
& \lim_{N\to\infty} \mc Q_N \Big [\, 
\max_{1\le j\le K} \big \vert \, v_{\ell}(\mf x_j)^{-1} 
- E_j^{-1} \big\vert > \delta \Big] \;=\; 0 \;, \\
&\quad \lim_{N\to\infty} \mc Q_N \Big [\, 
\bigcup_{j=1}^K  \{ \deg(\mf x_j) \not = D_j \} \Big] \;=\; 0 \;, \\
&\qquad \mc Q_N \big[ D_1\ge 1 \,,\, 0< E_1 \le 1\big] \;=\;1\;,\quad 
E_{\mc Q_N} \big[(D_1/E_1)^2 \big] \;<\; \infty \; ,
\end{split}
\end{equation}
for one and therefore all $N\ge 1$, where $\ell=\ell_N$ is the radius
of the balls placed around each trap and introduced right above
\eqref{BB0}, and $\mf x_1, \dots, \mf x_K$ is a collection of distinct
vertices chosen uniformly in $V_N$.  We can now state our second main
result, which can be seen as a generalization of
Theorem~\ref{t:trans}.

\begin{theorem}
\label{t:random}
Fix a sequence of random graphs $G_N$. Suppose that
\eqref{B0}--\eqref{e:c4} hold and that $\Psi_N(X^N_0)$ converges in
probability to some $k\in\bb N$. Then, defining $\beta_N =
c_{\bb V}^{-1}$, we have that
\begin{equation*}
(c_{\bb V} \mb W^N, \Psi_N(X^N_{t \beta_N})) \quad\text{converges weakly
  to}\quad  (\mb w, K_t)\;,
\end{equation*}
where the sequence $\mb w =(w_1, w_2, \dots)$ is defined in
\eqref{10} and where for each fixed $\mb w$, $K_t$ is a $K$-process
starting from $k$ with parameter $(\mb Z,\mb u)$, where $Z_k =
w_k/E_k$ and $u_k = D_k E_k$.  In the convergence, we adopted $L^1(\bb
N)$ topology in the first coordinate and $d_T$-topology in the second.
\end{theorem}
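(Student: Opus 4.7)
The plan is to deduce Theorem~\ref{t:random} from the deterministic-graph result announced for Section~\ref{sec8} by letting the coupling \eqref{e:c4} play the role that pseudo-transitivity plays in the proof of Theorem \ref{t:trans}. Conditions \eqref{B0}--\eqref{BB1} still allow us to truncate the state space to the first $M$ deep traps whose neighborhoods $B(x^N_j, 2\ell_N)$, $1 \le j \le M$, are disjoint with high probability, carry all but a vanishing fraction of the invariant mass $\nu_N$, and cannot be reached in less than the mixing time when the chain starts outside them.

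Fix $M$ and work with $\{x^N_1,\dots,x^N_M\}$; the limit $M \to \infty$ will be taken at the end. By \eqref{e:c4} and \eqref{10} there is joint convergence
\begin{equation*}
 \bigl( c_{\bb V}\, W^N_{x^N_j},\; v_{\ell_N}(x^N_j),\; \deg(x^N_j) \bigr)_{1 \le j \le M}
 \;\Longrightarrow\; (w_j, E_j, D_j)_{1 \le j \le M},
\end{equation*}
with $(D_j, E_j)$ i.i.d.\ and independent of $\mb w$. On the time scale $\beta_N = c_{\bb V}^{-1}$, the mean rescaled holding time per excursion at the deep trap $x^N_j$ equals $c_{\bb V}\, W^N_{x^N_j} / v_{\ell_N}(x^N_j)$, which converges to $w_j/E_j = Z_j$. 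For the jump kernel, a standard potential-theoretic computation based on reversibility of $\X^N_n$ shows that, up to error absorbed by \eqref{BB1}, the conditional distribution of the next deep trap visited after an escape is proportional to $\pi_N(x^N_i)\, v_{\ell_N}(x^N_i) \propto \deg(x^N_i)\, v_{\ell_N}(x^N_i)$; the coupling in \eqref{e:c4} then normalizes this to $D_i E_i / \sum_{j \le M} D_j E_j = u_i / \sum_{j \le M} u_j$, which is precisely the entrance distribution of the $K$-process of parameter $(\mb Z, \mb u)$ on the set $\{1,\dots,M\}$.

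With these two identifications at hand, the coupling constructed in Section~\ref{sec6} matches, up to negligible error, the trace of $\Psi_N(X^N_{t\beta_N})$ on $\{1,\dots,M\}$ with a Markov chain on $\{1,\dots,M\}$ whose holding-time means and jump kernel converge to those of the trace of a $K$-process of parameter $(\mb Z, \mb u)$. Combining this with the abstract convergence result of Section~\ref{sec5} and with the $d_T$-topology of Section~\ref{sec4}, which is insensitive to short excursions through shallow traps, one obtains the stated weak limit upon letting $M \to \infty$.

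The principal obstacle is controlling the shallow-trap contribution uniformly in $M$: one must show that on any window $[0,T]$ of rescaled time, $\Lambda(\{t : \Psi_N(X^N_{t\beta_N}) > M\}) \to 0$ in the iterated limit $N \to \infty$, then $M \to \infty$. This is where the second-moment condition $E_{\mc Q_N}[(D_1/E_1)^2] < \infty$ in \eqref{e:c4} is used: the expected rescaled time spent per excursion at a vertex $y$ is of order $\deg(y)\, W^N_y / v_{\ell_N}(y)$, and a Cauchy--Schwarz-type estimate using the second moment, combined with the tightness \eqref{B0} of the invariant mass, makes the contribution of the tail $\{x^N_j : j > M\}$ vanish. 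The analogous bound in Theorem~\ref{t:trans} requires only a first-moment input since the escape probabilities are essentially deterministic under \eqref{pseudo}, which explains why \eqref{e:c4} must strengthen pseudo-transitivity precisely in this respect.
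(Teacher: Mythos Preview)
Your sketch captures the right ingredients---joint convergence of $(c_{\bb V}W^N_{x_j}, v_{\ell_N}(x_j), \deg(x_j))$ to $(w_j,E_j,D_j)$, identification of holding times $Z_j=w_j/E_j$ and entrance weights $u_j=D_jE_j$, and the need to control the shallow-trap contribution---and would lead to a valid proof. But the architecture differs from the paper's. The paper does \emph{not} fix $M$ and let $M\to\infty$ afterwards; instead it chooses a single diverging sequence $M_N\uparrow\infty$, slow enough that the quantitative forms of \eqref{BB0}--\eqref{e:c4} hold with the factor $M_N$ in front, defines the good events $\Sigma^1_N,\dots,\Sigma^5_N$ (disjoint balls, $|v_\ell(x_j)^{-1}-E_j^{-1}|\le M_N^{-2}$, $\deg(x_j)=D_j$, mixing-before-hitting, tightness of $\nu_N$), passes to a subsequence on which $\bigcap_j\Sigma^j_N$ holds almost surely, and then verifies the \emph{deterministic} hypotheses ({\bf A1})--({\bf A3}), \eqref{e:c2}, \eqref{14}, \eqref{04} of Theorem~\ref{s13} on that event. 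This packaging avoids the iterated limit and lets the machinery of Sections~\ref{sec6}--\ref{sec8} apply verbatim.

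Two smaller points where your account is imprecise. First, the expected time per excursion at $y$ is $W^N_y/v_\ell(y)$, not $\deg(y)W^N_y/v_\ell(y)$; the degree enters only through $\nu_N$ and the entrance measure $\rho_N$. Second, the shallow-trap control in the paper does not actually use a Cauchy--Schwarz estimate with the \emph{second} moment $E[(D_1/E_1)^2]$. What is used is (i) $\sum_j w_j D_j/E_j<\infty$ a.s., which follows from independence of $\{w_j\}$ and $\{(D_j,E_j)\}$ together with $E[D_1/E_1]<\infty$, and (ii) the strong law $\tfrac1n\sum_{j\le n}D_j/E_j\to E[D_1/E_1]$. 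Both require only the first moment; these two facts are exactly what is needed to verify ({\bf A2}) and the second line of \eqref{04}. The second-moment hypothesis in \eqref{e:c4} is therefore used only insofar as it implies the first.
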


In Section \ref{sec10}, we apply this result to the largest component
of a super-critical Erd\"os-R\'enyi random graph. We expect this
statement to be applicable in a wider context, such as random graphs
with random degree sequences, or percolation clusters on certain
graphs.

\section{Hitting probabilities}
\label{sec2}

We prove in this section general estimates on the hitting distribution
of a random walk on a finite graph.  These estimates will be useful in
the description of the trace of our trap model on the deepest traps.
Since $N$ will be kept fixed throughout the section, we omit $N$ from
the notation almost everywhere.

Recall that we denote by $d=d_N$ the graph distance on $V_N$: $d(x,y)=
m$ if there exists a sequence $x=z_0, z_1, \dots, z_m =y$ such that
$z_{i+1} \sim z_i$ for $0\le i\le m-1$, and if there do not exist
shorter sequences with this property. For $x\in V_N$ and a subset $C$
of $V_N$, denote by $d(x,C)$ the distance from $x$ to $C$: $d(x,C)
= \min_{y\in C} d(x,y)$.  For $\ell\ge 1$, denote by $B(C,\ell)$ the
vertices at distance at most $\ell$ from $C$: $B(C,\ell) = \{x\in V_N
: d(x,C) \le \ell\}$ and let $R(C,\ell) = B(C,\ell)^c$. When the set
$C$ is a singleton $\{x\}$, we write $B(x,\ell)$, $R(x,\ell)$ for
$B(\{x\},\ell)$, $R(\{x\},\ell)$, respectively.

Fix $M\ge 1$, a subset $A = \{x_1, \dots , x_M\}$ of $V_N$ and
$\ell\ge 1$. Recall from Section \ref{sec1} that we denote by $v_\ell
(x)$, $x\in A$, the escape probability from $x$, and let $p(x, A)$ be
the probability of reaching the set $A$ at $x$, when starting at
equilibrium:
\begin{equation}
\label{f01}
v_\ell (x) \;=\; \mb P_x \big[ \HH_{R(x,\ell)} < \HH^+_{x} \big]\;, 
\quad p(x, A) \;=\; \mb P_{\pi_N} \big[ \bb X^N(\HH_{A}) = x \big]\;,
\end{equation}
where $\pi_N$ is the stationary state of the discrete-time chain $\bb
X^N_n$, introduced in \eqref{00}.

\begin{lemma}
\label{s01}
Fix a subset $A =\{x_1,\dots,x_M\}$ of $V$. For any $z\not \in A$
and for any $L\ge 1$,
\begin{equation*}
\sum_{j=1}^M \big| \mb P_z[\X_{\HH_A} = x_j] - p(x_j, A)
\big| \; \leq \; 2 \, \big( 2^{-L} \;+\; \mb P_z[\HH_A <  L t_{\text{mix}}]
\big) \;.
\end{equation*}
Moreover, if there exists $\ell \ge 1$ such that $d(x_a,x_b) > 2 \ell+1$
for $a\not = b$, then for all $L\ge 1$ and for all $1\le i\le M$,
\begin{equation*}
\sum_{j\not = i} \big| \mb P_{x_i}[\X_{\HH_A} = x_j] - v_\ell (x_i) \, p(x_j, A)
\big| \;\leq\; 2 \, v_\ell (x_i) \, \max_{z \in R(A, \ell)} 
\Big\{  2^{-L} +  \mb P_z \big[ \HH_A <  L t_{\text{mix}} \big] \Big\}.
\end{equation*}
\end{lemma}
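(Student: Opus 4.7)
The plan is to prove the first estimate by a direct Markov decomposition at time $L t_{\text{mix}}$, and then to deduce the second by running the chain for one excursion out of $x_i$ and invoking the first at the exit point.

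For the first estimate, I set
\begin{equation*}
\nu_z(y) \;:=\; \mb P_z[\X_{L t_{\text{mix}}} = y],
\qquad
\mu(y) \;:=\; \mb P_z\big[\X_{L t_{\text{mix}}} = y,\, \HH_A \ge L t_{\text{mix}}\big],
\end{equation*}
and extend $f_j(y) := \mb P_y[\X_{\HH_A} = x_j]$ to $A$ by $f_j(x_k) = \delta_{kj}$. Applying the strong Markov property at time $L t_{\text{mix}}$, with both $\HH_A > L t_{\text{mix}}$ and the boundary case $\HH_A = L t_{\text{mix}}$ absorbed into $\mu$, yields
\begin{equation*}
\mb P_z[\X_{\HH_A} = x_j] \;=\; \sum_y \mu(y)\, f_j(y) \;+\; \mb P_z\big[\X_{\HH_A} = x_j,\, \HH_A < L t_{\text{mix}}\big],
\end{equation*}
whereas $p(x_j, A) = \sum_y \pi_N(y)\, f_j(y)$ by definition. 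Subtracting, taking $|\cdot|$, summing over $j$, and using the identity $\sum_j f_j(y) \equiv 1$ (valid for every $y \in V_N$) collapses the whole sum to
\begin{equation*}
\sum_{j=1}^M \big|\mb P_z[\X_{\HH_A} = x_j] - p(x_j, A)\big| \;\le\; \sum_y |\mu(y) - \pi_N(y)| \;+\; \mb P_z[\HH_A < L t_{\text{mix}}].
\end{equation*}
The triangle inequality $|\mu - \pi_N| \le |\mu - \nu_z| + |\nu_z - \pi_N|$ then splits the remaining sum: the first piece equals exactly $\mb P_z[\HH_A < L t_{\text{mix}}]$ since $0 \le \nu_z - \mu$ has total mass $\mb P_z[\HH_A < L t_{\text{mix}}]$, while the second is at most $2 \cdot 2^{-L}$ by the standard submultiplicative property of the mixing time (Lemma 4.12 of \cite{LPW09}).

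For the second estimate, I interpret $\HH_A$ as the return time $\HH^+_A$ when starting at $x_i \in A$. The separation $d(x_a,x_b) > 2\ell+1$ forces $\partial B(x_i,\ell) \subset R(A,\ell)$, so any first escape of the chain from $B(x_i,\ell)$ lands in $R(A,\ell)$. Decomposing on this first excursion, for $j \neq i$,
\begin{equation*}
\mb P_{x_i}[\X_{\HH_A} = x_j] \;=\; \sum_Z q_Z\, \mb P_Z[\X_{\HH_A} = x_j],
\qquad
q_Z \;:=\; \mb P_{x_i}\big[\X_{\HH_{R(x_i,\ell)}} = Z,\, \HH_{R(x_i,\ell)} < \HH^+_{x_i}\big],
\end{equation*}
with $\sum_Z q_Z = v_\ell(x_i)$. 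Subtracting $v_\ell(x_i)\, p(x_j, A) = \sum_Z q_Z\, p(x_j, A)$, taking $|\cdot|$, summing over $j \neq i$ (then bounding by the full sum over $j$), and invoking the first estimate at each $Z \in R(A,\ell)$ delivers the stated bound.

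The main technical care lies in the first estimate: splitting naively on $\{\HH_A \le L t_{\text{mix}}\}$ versus $\{\HH_A > L t_{\text{mix}}\}$ produces a spurious $\pi_N(A)$ error term which has no reason to be small. Absorbing the boundary event $\{\HH_A = L t_{\text{mix}}\}$ into $\mu$ and exploiting $\sum_j f_j \equiv 1$ is the trick that isolates a bound depending only on the mixing quality and on the early-hit probability $\mb P_z[\HH_A < L t_{\text{mix}}]$.
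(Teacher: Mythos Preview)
Your proof is correct and follows essentially the same route as the paper: the Markov property at time $L t_{\text{mix}}$ together with the total-variation mixing bound for the first estimate, and the excursion decomposition through the exit from $B(x_i,\ell)$ for the second. Your bookkeeping via the sub-probability measure $\mu$ and the identity $\sum_j f_j \equiv 1$ is a clean variant of the paper's separate upper/lower bounds, but the ingredients and the final arithmetic are the same.
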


\begin{proof}
Fix a subset $A =\{x_1,\dots,x_M\}$ of $V$ and $z\not\in A$.  By
definition of the mixing time $t_{\text{mix}}$ and by the definition
of the total variation distance, 
\begin{equation*}
\begin{split}
& \sum_{j=1}^M \Big| \, \mb E_z \big[ \mb P_{\X (L t_{\text{mix}})} 
\big[ \X_{\bb H_A} = x_j \big]\, \big] \;-\; \mb P_\pi \big[ \X_{\bb H_A} = x_j
\big] \, \Big| \\
& \qquad = \; \sum_{j=1}^M \Big| \, \sum_{w\in V} 
\Big\{ \mb P_z \big[ \X (L t_{\text{mix}}) = w \big] - \pi(w) \Big\}
\, \mb P_w \big[ \X_{\HH_A} = x_j \big] \Big| \\
& \qquad \leq\; 
2 \, \big\lVert \mb P_z[\X_{Lt_{\text{mix}}} = \cdot\,] \,-\, \pi(\cdot) 
\big\rVert_{TV} \;\leq\; 2 \cdot 2^{-L}\;. 
\end{split}
\end{equation*}

To prove the first claim of the lemma, apply the Markov property to
get that
\begin{equation*}
\mb P_z [\X_{\HH_A} = x_j] \;\leq\; \mb E_z \big[ \mb P_{\X(L t_{\text{mix}})}
[\X_{\HH_A} = x_j]\, \big] \;+\; \mb P_z \big[ \X_{\HH_A} = x_j \,,\,
\HH_A \leq L t_{\text{mix}} \big]
\end{equation*}
and that
\begin{equation*}
\begin{split}
& \mb P_z\big[ \X_{\HH_A} = x_j \big] \; \geq\; 
\mb P_z \big[ \X_{\HH_A} = x_j \,,\, \HH_A > L t_{\text{mix}} \big]  \\
& \qquad = \; \mb E_z\big[ \mb P_{\X (L t_{\text{mix}})} [\X_{\HH_A}=x_j] \, \big] 
\;-\; \mb E_z \big[ \mb P_{\X (L t_{\text{mix}})} [\X_{\HH_A}=x_j]
\,,\, \HH_A \leq L t_{\text{mix}} \big] \;.\\
\end{split}
\end{equation*}
The triangular inequality together with the previous two bounds and
the estimate presented in the beginning of the proof show that
\begin{equation*}
\sum_{j=1}^M \big| \mb P_z[\X_{\HH_A} = x_j] - \mb P_\pi [X_{\HH_A} = x_j]
\big| \; \leq \; 2\big( 2^{-L} +  \mb P_z[\HH_A <  L t_{\text{mix}}] \big)\;.
\end{equation*}
This proves the first claim of the lemma.

We turn now to the proof of the second claim of the lemma.  Since
$d(x_i, A\setminus \{x_i\}) > \ell$ and $i\not = j$, the expression
inside the absolute value on the left hand side of the inequality can
be written as
\begin{equation*}
\mb P_{x_i} \big[ \X(\HH_A) = x_j \, \big|\,  \HH_{R(x_i,\ell)} 
< \HH^+_{x_i}  \big]
\, v_\ell (x_i) \;-\; v_\ell (x_i) \, p(x_j, A)\; .
\end{equation*}
The absolute value is thus bounded by
\begin{equation*}
\sum_{z\in V} \big|\, \mb P_{z} [ \X(\HH_A) = x_j ]
- p (x_j, A) \, \big|\, 
\mb P_{x_i}  \big[ \HH_{R(x_i,\ell)} < \HH^+_{x_i} \,,\,
\X(\HH_{R(x_i,\ell)}) = z \big] \; .
\end{equation*}
Since $d(x_a,x_b)>2\ell+1$, $a\not = b$, the set of vertices $z$ at
distance $\ell+1$ from $x_i$ is disjoint from $A$. Hence, by the first
part of the proof, the sum over $j\not =i$ of this expression is
bounded above by
\begin{equation*}
2 \, v_\ell (x_i) \, \max_{z\in R(A,\ell)} \Big\{ 2^{-L} \;+\;  
\mb P_z[\HH_A <  L t_{\text{mix}}] \Big\} 
\end{equation*}
for every $L\ge 1$. This proves the lemma.
\end{proof}

Denote by $\mc D(f)$ the Dirichlet form of a function $f: V \to
\mathbb{R}$:
\begin{equation*}
\mathcal{D}(f) \;=\; \frac{1}{2} \sum_{x\in V} \sum_{y\sim x}
\frac {\nu(x)} {\deg(x) \, W_x}  (f(x) - f(y))^2 \;.
\end{equation*}
For disjoint subsets $A$ and $B$ of $V$, denote by $\Cap (A,B)$ the
capacity between $A$ and $B$:
\begin{equation*}
\Cap (A,B) \;=\; \inf_f \mc D(f)\;,
\end{equation*}
where the infimum is carried over all functions $f: V \to \bb R$
such that $f (x) =1$ for $x\in A$, $f(y) = 0$, $y\in B$. Let $g: V
\to [0,1]$ be given by
\begin{equation*}
g_{A,B}(x) \;=\; \mb P_x[H_A \leq H_B]\;.
\end{equation*}
It is a known fact that
\begin{equation}
\label{f02}
\Cap (A,B)\;=\; \mathcal{D}(g_{A,B}) 
\;=\; \sum_{y \in A} \nu(y) \, W_y^{-1}
\, \mb P_y [H_B < H^+_A]\;.
\end{equation}
Note that we may replace in the above identity $H_B$, $H^+_A$ by
$\bb H_B$, ${\bb H}^+_A$, respectively.

Take a set $A \subset V$ composed of $M$ points which are far apart
and let $x$ be a point in $A$. In the next lemma, we are going to
estimate the probability $p(x,A) = \mb P_\pi[\X_{\HH_A} = x]$.  This
probability will be roughly proportional to $\deg(x)v_\ell(x)$. Let
us first introduce a normalizing constant.  For $\ell\ge 1$ and a
finite subset $A$ of $V$, let
\begin{equation*}
\Gamma_\ell (A) \;=\; \sum_{x\in A} \deg(x)v_\ell(x)\;.
\end{equation*}

\begin{lemma}
\label{s03}
Fix a subset $A =\{x_1,\dots,x_M\}$ of $V$ such that $d(x_a,x_b) > 2
\ell +1$, $a\not = b$, for some $\ell \ge 1$. Then,
\begin{equation*}
\max_{1\le i\le M} \Big| \, p(x_i, A) - \frac {\deg (x_i) \, v_\ell(x_i)}
{\Gamma_\ell (A)} \, \Big| \;\leq\;  2\, \max_{z\in R(A,\ell)} \big\{ 2^{-L} 
+  \mb P_z[\HH_A \leq  L t_{mix}] \big\}\;.
\end{equation*}
\end{lemma}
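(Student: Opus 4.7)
The plan is to exploit stationarity of the trace of the lazy skeleton chain $\{\X^N_n\}$ on the set $A=\{x_1,\dots,x_M\}$. Let $\tilde P(x_i, x_j) := \mb P_{x_i}[\X(\HH^+_A) = x_j]$ denote its transition matrix; since $\pi_N$ is reversible for $\{\X^N_n\}$, the measure $\tilde\pi(x_i) := \pi_N(x_i)/\pi_N(A) = \deg(x_i)/\sum_{y\in A}\deg(y)$ is stationary for $\tilde P$, giving
\begin{equation*}
\tilde\pi(x_i) \;=\; \tilde\pi(x_i)\, \tilde P(x_i, x_i) \;+\; \sum_{j\neq i} \tilde\pi(x_j)\, \tilde P(x_j, x_i)\;.
\end{equation*}
The goal is to read off $p(x_i, A)$ from this identity after estimating each $\tilde P(x_j, x_i)$ through Lemma~\ref{s01}.

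Set $E_L := \max_{z\in R(A,\ell)}\{2^{-L} + \mb P_z[\HH_A \le L\, t_{\text{mix}}]\}$. Repeating the escape decomposition used in the proof of the second claim of Lemma~\ref{s01} --- which uses the separation hypothesis $d(x_a,x_b)>2\ell+1$ to guarantee that on the event $\{\HH_{R(x_i,\ell)} < \HH^+_{x_i}\}$ the chain, when it first exits $B(x_i,\ell)$, actually lies in $R(A,\ell)$ --- yields for every $j\neq i$
\begin{equation*}
\bigl|\, \tilde P(x_i, x_j) - v_\ell(x_i)\, p(x_j, A) \,\bigr| \;\le\; 2\, v_\ell(x_i)\, E_L\;,
\end{equation*}
while the row-sum constraint $\sum_j \tilde P(x_i, x_j) = 1 = \sum_j p(x_j, A)$ supplies the complementary diagonal bound $|\tilde P(x_i, x_i) - (1 - v_\ell(x_i)(1 - p(x_i, A)))| \le 2\, v_\ell(x_i)\, E_L$.

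Substituting both bounds into the stationarity identity produces the key cancellation: the $p(x_i, A)$-term arising from the diagonal merges with the one arising from the off-diagonal sum into $p(x_i, A)\sum_{j=1}^M \tilde\pi(x_j) v_\ell(x_j)$, and the accumulated error is bounded by $\sum_{j=1}^M \tilde\pi(x_j)\cdot 2 v_\ell(x_j) E_L$. After cancelling $\tilde\pi(x_i)$ on both sides, one is left with
\begin{equation*}
\bigl|\, \tilde\pi(x_i)\, v_\ell(x_i) \;-\; p(x_i, A)\sum_{j=1}^M \tilde\pi(x_j)\, v_\ell(x_j)\,\bigr| \;\le\; 2\, E_L \sum_{j=1}^M \tilde\pi(x_j)\, v_\ell(x_j)\;,
\end{equation*}
and dividing by $\sum_j \tilde\pi(x_j) v_\ell(x_j)$ --- noting that the common factor $\sum_{y\in A}\deg(y)$ in $\tilde\pi$ cancels so that $\tilde\pi(x_i) v_\ell(x_i)/\sum_j \tilde\pi(x_j) v_\ell(x_j) = \deg(x_i) v_\ell(x_i)/\Gamma_\ell(A)$ --- delivers the stated bound. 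The delicate point is the handling of the diagonal $\tilde P(x_i, x_i)$, which is not controlled directly by Lemma~\ref{s01}; recovering it from the row-sum constraint and ensuring the resulting cancellation against the off-diagonal $p(x_i, A)$-contribution is what makes the right-hand side scale with $\sum_j \tilde\pi(x_j) v_\ell(x_j)$ rather than a larger quantity, producing the clean error $2E_L$ independent of the weights $v_\ell(x_j)$.
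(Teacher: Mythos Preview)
Your proof is correct and amounts to a repackaging of the paper's argument. The paper's capacity identity \eqref{f03}, namely $\deg(x_i)\,\mb P_{x_i}[\HH_{A_i}<\HH^+_{x_i}]=\sum_{j\neq i}\deg(x_j)\,\mb P_{x_j}[\HH_{x_i}<\HH^+_{A_i}]$, is --- once one notes that under the separation hypothesis $\mb P_{x_j}[\HH_{x_i}<\HH^+_{A_i}]=\tilde P(x_j,x_i)$ for $j\neq i$ and $\mb P_{x_i}[\HH_{A_i}<\HH^+_{x_i}]=1-\tilde P(x_i,x_i)$ --- exactly your trace-chain stationarity equation $\tilde\pi=\tilde\pi\tilde P$ multiplied through by $\sum_{y\in A}\deg(y)$. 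The escape-decomposition estimates you invoke from Lemma~\ref{s01} and the ensuing algebra are then identical to the paper's; the only cosmetic difference is that the paper bounds the diagonal contribution $1-\tilde P(x_i,x_i)$ directly via an escape decomposition from $x_i$ (this is \eqref{f04}), whereas you recover the same bound from the row-sum constraint combined with the off-diagonal estimates.
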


\begin{proof}
Fix $1\le i\le M$ and let $A_i = A \setminus \{x_i\}$. Since
$\mathcal{D}(g_{\{x_i\},A_i}) = \mathcal{D} (1-g_{\{x_i\},A_i})$, by
\eqref{f02}
\begin{equation}
\label{f03}
\deg(x_i)  \mb P_{x_i} [\HH_{A_i} < \HH^+_{x_i}] 
\;=\; \sum_{j\not = i} \deg(x_j) \mb P_{x_j}[\HH_{x_i} 
< \HH^+_{A_i}] \;.
\end{equation}

On the other hand, since $d(x_i, A_i) > \ell$,
\begin{equation*}
\begin{split}
\mb P_{x_i} [\HH_{A_i} < \HH^+_{x_i}] & \;=\; 
\mb E_{x_i} \Big[ \, \mb 1 \{\HH_{R(x_i,\ell)} < \HH^+_{x_i} \} 
\, \mb P_{\X(\HH _{R(x_i,\ell)})} [\HH_{A_i} < \HH_{x_i}] \, \Big] \\
& \;=\; \mb E_{x_i} \Big[ \mb 1\{ \HH_{R(x_i,\ell)} < \HH^+_{x_i}\} \,
\big (1- \mb P_{\X (\HH_{R(x_i,\ell)})} [\X_{\HH_A} = x_i] \big) \, \Big]\;.
\end{split}
\end{equation*}
Therefore,
\begin{equation*}
\begin{split}
& \mb P_{x_i}[\HH_{A_i} < \HH^+_{x_i}]
- v_\ell(x_i) \, [1- p(x_i,A)] \\
& \quad =\; \mb E_{x_i} \Big[ 
\mb 1\{ \HH_{R(x_i,\ell)} < \HH^+_{x_i}\} \,
\Big\{ p(x_i,A)  \,-\, \mb P_{\X (\HH_{R(x_i,\ell)})} [\X_{\HH_A} = x_i ] 
\Big\} \, \Big]   \;.
\end{split}
\end{equation*}
Since $d(x_a,x_b) > 2\ell+1$, we may replace in the previous
expression $\X (\HH_{R(x_i,\ell)})$ by $\X (\HH_{R(A,\ell)})$.  By the
first assertion of Lemma~\ref{s01}, the absolute value of the
difference inside braces is less than or equal to $2\, \max_{z\in
  R(A,\ell)} \{ 2^{-L} + \mb P_z[\HH_A \leq L t_{mix}] \}$ for every
$L\ge 1$. Hence,
\begin{equation}
\label{f04}
\begin{split}
& \Big| \mb P_{x_i}[\HH_{A_i} < \HH^+_{x_i}]
- v_\ell(x_i) \, [1- p(x_i,A)]\, \Big| \\
&\quad \leq \; 2\, v_\ell(x_i)\, \max_{z\in R(A,\ell)} \big\{ 2^{-L} 
+  \mb P_z[\HH_A \leq  L t_{mix}] \big\} 
\end{split}
\end{equation}
for every $L\ge 1$.

Similarly, from \eqref{f03} one obtains that
\begin{equation*}
\begin{split}
& \deg(x_i) \mb P_{x_i} [\, \HH_{A_i} < \HH^+_{x_i} ] 
\;=\; \\
&\qquad \sum_{j\not = i} \deg(x_j) \mb E_{x_j} \Big[\, \mb 1 
\{\HH_{R(x_j,\ell)} < \HH^+_A\} \, \mb P_{\X(R(x_j,\ell))} 
[\X_{\HH_A} = x_i] \, \Big]\; .
\end{split}
\end{equation*}
It follows from this identity and the previous argument that
\begin{equation*}
\begin{split}
& \Big| \deg(x_i) \mb P_{x_i}[\, \bb H_{A_i} <  {\bb H}^+_{x_i}]  -
\sum_{j\not = i} \deg(x_j) v_\ell(x_j) p(x_i,A) \Big| \\ 
& \quad \leq\; 2\, \sum_{j\not = i} \deg(x_j) v_\ell(x_j) 
\max_{z\in R(A,\ell)} \big\{ 2^{-L} +  \mb P_z [\, \bb H_A \leq  L t_{mix}] \big\}
\end{split}
\end{equation*}
for all $L\ge 1$. 

The two previous estimates yield the bound
\begin{equation*}
\begin{split}
& \Big|  \deg(x_i) v_\ell(x_i) [1- p(x_i,A)]
- \sum_{j\not = i} \deg(x_j) v_\ell(x_j) p(x_i,A) \Big| \\
& \qquad \leq\; 2\, \sum_{j=1}^M \deg(x_j) v_\ell(x_j)
\max_{z\in R(A,\ell)} \big\{ 2^{-L} +  \mb P_z[\, \bb H_A \leq  L t_{mix}]
\big\} \; . 
\end{split}
\end{equation*}
To conclude the proof of the lemma, it remains to divide both sides of
the inequality by $\Gamma_\ell(A)$.
\end{proof}

\section{Holding times of the trace process}
\label{sec:general}

We present in this section a general result on Markov chains computing
the time spent by this chain on a subset of the state space.  This
will be useful later in proving that the time spent by the walk on the
shallow traps can be disregarded.

Consider an \emph{irreducible} continuous-time Markov process $\{X_t :
t\ge 0\}$ on a \emph{finite} state space $V$. Denote by $\{W_x : x\in
V\}$ the mean of the exponential waiting times, by $\nu$ the unique
stationary probability measure, and by $\{\tau_j : j\ge 0\}$ the
sequence of jump times.

Denote by $\mb P_x$, $x\in V$, the probability measure on the path
space $D(\bb R_+, V)$ induced by the Markov process $X_t$ starting
from $x$. Expectation with respect to $\mb P_x$ is represented by $\mb
E_x$. For a probability measure $\mu$ on $V$, let $\mb P_\mu =
\sum_{x\in V} \mu(x) \mb P_x$.

Fix a set $A \subset V$ and let $U$ be a stopping time such that for
all $x\in A$, 
\begin{equation*}
\mb P_x [\tau_1\le U]=1\;, \quad \mb P_x[H_{A\setminus \{x\}}
\ge U] =1\;, \quad \mb E_x[U] <\infty \;.
\end{equation*}
$U=H_{R(A,\ell)}$ is the example to keep in mind, where $\ell$ is
chosen so that $d(x,y)> 2\ell+1$ for all $x\not=y\in A$.  Let $S_A = U
+ H_A \circ \theta_U$ be the hitting time of the set $A$ after time
$U$. Denote by $v(x)$ the probability that starting from $x$ the
stopping time $U$ occurs before the process returns to $x$: $v(x) =
\mb P_x [ U < H^+_{x}]$, which should be understood as an escape
probability.

Let $D_k$, $k\ge 0$, be the time of the $k$-th return to $A$ after
escaping: $D_0=0$, $D_1 = S_A$, $D_{k+1} = D_k + S_A \circ
\theta_{D_k}$, $k\ge 1$. Clearly, if $X_0$ belongs to $A$, $\{X_{D_k}:
k\ge 0\}$ is a discrete time Markov chain on $A$.  On the other hand,
by assumption $\mb E_x [D_1] = \mb E_x [U + H_A \circ \theta_U]$ is
finite.

\begin{lemma}
\label{t02}
The Markov chain $\{X_{D_k} : k\ge 0\}$ is irreducible. Moreover, for
every $f: V \to \bb R$,
\begin{equation*}
\lim_{k\to\infty} \frac 1k \int_0^{D_k} f(X_t) \, dt 
\;=\; \sum_{z\in A} \rho(z) \, \mb E_z \Big[ \int_0^{D_1} f(X_t) 
\, dt \Big]
\end{equation*}
$\mb P_{\nu}$-almost surely, where $\rho$ is the unique stationary
state of the discrete time chain $\{X_{D_k} : k\ge 0\}$.
\end{lemma}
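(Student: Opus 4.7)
The plan is to first establish that $\{X_{D_k} : k \ge 0\}$ is irreducible on $A$, and then to apply Birkhoff's ergodic theorem to the excursion functionals $Y_j := \int_{D_j}^{D_{j+1}} f(X_t) \, dt$.

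For irreducibility, the key structural observation is that during any single excursion $[D_k, D_{k+1}]$ the process $X_t$ visits $A$ only at the endpoints $X_{D_k}$ and $X_{D_{k+1}}$. Indeed, on $[D_k, D_k + U \circ \theta_{D_k}]$ the assumption $\mb P_x[H_{A\setminus \{x\}} \ge U] = 1$ keeps $X_t$ away from $A \setminus \{X_{D_k}\}$, while on the remaining interval $(D_k + U \circ \theta_{D_k}, D_{k+1})$ the process lies in $V \setminus A$ by definition of $H_A \circ \theta_U$. Consequently, if the chain $\{X_{D_k}\}$ were not irreducible, there would exist $x \in A$ such that the set $C$ of states reachable from $x$ is a closed proper subset of $A$; then starting from $x$ the continuous-time process would almost surely avoid the non-empty set $A \setminus C$ for all time, contradicting the irreducibility of $X_t$ on $V$. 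Since $A$ is finite, $\{X_{D_k}\}$ is therefore irreducible and positive recurrent, with a unique invariant probability $\rho$.

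For the ergodic statement, start from $X_{D_0} \sim \rho$. By the strong Markov property the sequence of pairs $\bigl(X_{D_j}, (X_{D_j+s})_{0 \le s \le D_{j+1} - D_j}\bigr)_{j \ge 0}$ is a stationary Markov chain in path space under $\mb P_\rho$, and its ergodicity follows from the irreducibility of $\{X_{D_k}\}$ established above. The integrability $\mb E_\rho[|Y_0|] \le \|f\|_\infty \mb E_\rho[D_1] < \infty$ is clear, since $\mb E_z[U] < \infty$ by hypothesis and $\max_{y \in V} \mb E_y[H_A] < \infty$ by irreducibility on the finite set $V$. Birkhoff's ergodic theorem for the excursion sequence then yields
\begin{equation*}
\frac{1}{k} \int_0^{D_k} f(X_t) \, dt \;=\; \frac{1}{k} \sum_{j=0}^{k-1} Y_j \;\longrightarrow\; \mb E_\rho[Y_0] \;=\; \sum_{z \in A} \rho(z)\, \mb E_z\Bigl[\int_0^{D_1} f(X_t) \, dt\Bigr]
\end{equation*}
$\mb P_\rho$-almost surely. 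To upgrade this from $\mb P_\rho$ to $\mb P_\nu$, one uses that starting from $\nu$ the process reaches $A$ in finite time almost surely, so the convergence is a tail event and extends by comparing the law after the first hit of $A$ to that under $\mb P_\rho$.

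I expect the main technical obstacle to be the rigorous verification that the excursion sequence is stationary and ergodic under $\mb P_\rho$; once this is granted the remainder reduces to a direct application of Birkhoff, and everything hinges on the irreducibility step above---which in turn depends crucially on the hypothesis that $X_t$ does not touch $A \setminus \{x\}$ during $[0,U]$ when started at $x \in A$.
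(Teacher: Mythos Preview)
Your argument is correct and reaches the same conclusion, but by a different route from the paper on both halves.

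For irreducibility, the paper proceeds constructively: given $x, y \in A$, it takes a self-avoiding path $x = x_0, \ldots, x_n = y$ in $V$, lets $x_j$ be the first point of $A \setminus \{x\}$ along it, and checks directly that $\mb P_x[X_{D_1} = x_j] \ge \mb P_x[Z_1 = x_1, \ldots, Z_j = x_j] > 0$, using the hypothesis $\mb P_x[H_{A\setminus\{x\}} \ge U] = 1$ to ensure that on this event $U + H_A\circ\theta_U = \tau_j$. Iterating along the successive points of $A$ on the path yields a positive-probability route from $x$ to $y$ in the induced chain. Your contradiction argument via a closed proper subset is equally valid and arguably cleaner, since it isolates exactly the structural fact (no visits to $A\setminus\{X_{D_k}\}$ during an excursion) that drives both proofs.

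For the ergodic limit, the paper avoids Birkhoff entirely. It decomposes the time integral according to the value of $X_{D_j}$, introduces the successive return indices $K^x_n$ of $\{X_{D_k}\}$ to each fixed $x\in A$ and the occupation counts $L^x_k$, and writes
\[
\frac{1}{k}\int_0^{D_k} f(X_t)\,dt \;=\; \sum_{x\in A}\frac{L^x_k}{k}\cdot\frac{1}{L^x_k}\sum_{n=1}^{L^x_k}\int_{D_{K^x_n}}^{D_{K^x_n+1}} f(X_t)\,dt\;.
\]
It then applies the strong law of large numbers twice: once for $L^x_k/k \to \rho(x)$ (by irreducibility of the finite chain) and once for the i.i.d.\ excursion integrals (i.i.d.\ by the strong Markov property at the returns to $x$). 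This is more elementary than Birkhoff and, crucially, works directly under any initial distribution---in particular $\mb P_\nu$---with no upgrade step. Your Birkhoff route is more concise; the only point to sharpen is the passage from $\mb P_\rho$ to $\mb P_\nu$: what you actually use is that $\mb P_\rho$-a.s.\ convergence forces $\mb P_z$-a.s.\ convergence for every $z\in A$ (since $\rho(z)>0$), after which the tail nature of the limit and the strong Markov property at $D_1$ (where $X_{D_1}\in A$) extend it to arbitrary starting points. The phrase ``comparing the law after the first hit of $A$ to that under $\mb P_\rho$'' is not quite the right mechanism, since that law is $\mb P_z$ for a random $z$, not $\mb P_\rho$.
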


\begin{proof}
We first prove the irreducibility of the chain $\{X_{D_k} : k\ge 0\}$.
Fix $x$, $y\in A$ and consider a self-avoiding path $x_0=x, \dots, x_n
= y$ such that the discrete-time Markov chain associated to the Markov
process $X_t$ jumps from $x_i$ to $x_{i+1}$, $0\le i <n$, with
positive probability. Such path exists by the irreducibility of
$X_t$. Let $x_j$ be the first state in the sequence $x_1, \dots, x_n$
which belongs to $A$. Since $\mb P_x[H_{A\setminus \{x\}} \ge U] =1$,
\begin{equation*}
\begin{split}
& \mb P_x \big[ X_{D_1} = x_j \big] \;\ge\;
\mb P_x \Big[ X_{D_1} = x_j \,,\, Z_1 =x_1, \dots, Z_j = x_j \Big] \\
&\quad \;=\; \mb P_x \Big[ X_{U + H_A \circ \theta_{U}} = x_j \,,\,
U \le H_{A\setminus \{x\}}  \,,\, Z_1 =x_1, \dots, Z_j = x_j \Big]\;, 
\end{split}
\end{equation*}
where $\{Z_n : n\ge 0\}$ is the discrete-time jump chain associated to
the process $\{X_t:t\ge 0\}$. Since $U\ge \tau_1$, on the event $\{Z_1
=x_1, \dots, Z_j = x_j\}\cap \{ U \le H_{A\setminus \{x\}}\}$, $U +
H_A \circ \theta_{U} = \tau_j$. The previous probability is thus
equal to
\begin{equation*}
\mb P_x \Big[ X_{\tau_j} = x_j \,,\, Z_1 =x_1, \dots, Z_j = x_j \Big] 
\;= \; \mb P_x \big[ Z_1 =x_1, \dots, Z_j = x_j \big]\; >\; 0\;.
\end{equation*}
Repeating this argument for the subsequent states in the sequence
$x_1, \dots, x_n$ which belong to $A$, we prove that the chain
$X_{D_k}$ is irreducible.

Fix a function $f:V\to \bb R$. Clearly,
\begin{equation*}
\frac 1k \int_0^{D_k} f(X_t) \, dt
\;=\; \frac 1k \sum_{x\in A} \sum_{j=0}^{k-1} \int_{D_j}^{D_{j+1}} 
f(X_t) \, dt \, \mb 1\{X_{D_j} =x\} \;.
\end{equation*}
For $x\in A$, let $K^x_{1} = \min \{j\ge 0 : X_{D_j}=x\}$, $K^x_{n+1}
= \min \{j> K^x_{n} : X_{D_j}=x\}$, $n\ge 1$, and let $L^x_k = \# \{ j
< k : X_{D_j}=x\}$. With this notation, we can rewrite the previous
sum as
\begin{equation*}
\frac 1k \sum_{x\in A} \sum_{n=1}^{L^x_k} \int_{D_{K^x_n}}^{D_{K^x_n +1}} 
f(X_t) \, dt \; =\;
\sum_{x\in A} \frac {L^x_k}k  \frac 1{L^x_k} 
\sum_{n=1}^{L^x_k} \int_{D_{K^x_n}}^{D_{K^x_n +1}}  f(X_t) \, dt \;.
\end{equation*}
By the irreducibility of the chain $X_{D_k}$, for each $x\in A$,
$L^x_k/k$ converges a.s. as $k\uparrow \infty$ to $\rho(x)$.
Moreover, for each $x$, the variables $\int_{[D_{K^x_n}, D_{K^x_n
    +1})} f(X_t) \, dt$, $n\ge 1$, are independent and identically
distributed.  Hence, since $L^x_k \uparrow\infty$, by the law of large
numbers, $\mb P_{\nu}$-almost surely,
\begin{equation*}
\lim_{k\to\infty}
\frac 1{L^x_k} \sum_{n=1}^{L^x_k} \int_{D_{K^x_n}}^{D_{K^x_n +1}}
f(X_t) \, dt \;=\; \mb E_{x} \Big[ \int_{0}^{D_{1}}
f(X_t) \, dt \Big]\;.
\end{equation*}
The lemma follows from the two previous convergences.
\end{proof}

\begin{proposition}
\label{t01}
The unique stationary state $\rho$ of the discrete-time Markov chain
$\{X_{D_k} : k\ge 0\}$ satisfies
\begin{equation}
\label{rho}
\rho(x) = \nu(x) \, v(x) \, W^{-1}_x \, \mb E_\rho[D_1] \;=\;
\frac{\nu(x) \, v(x) \, W^{-1}_x}{\sum_y \nu(y) \, v(y) \, W^{-1}_y}.
\end{equation}
Moreover, for every $g: V\to \bb R$,
\begin{equation}
\label{out}
\sum_{x\in A} v(x) \, \nu(x) \, W^{-1}_x \, \mb E_x \Big[ \int_0^{D_1} 
g(X_t) \,dt \Big]\;=\; \sum_{x\in V} g (x) \, \nu(x)\;. 
\end{equation}
\end{proposition}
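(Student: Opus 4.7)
The plan is to combine Lemma \ref{t02} with the standard continuous-time ergodic theorem for the irreducible finite-state chain $X_t$, producing a duality identity between $\rho$ and $\nu$; specializing this identity to an indicator function pins down $\rho$ explicitly, and \eqref{out} then follows by substitution.

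\emph{Step 1 (duality).} Applying Lemma \ref{t02} with $f\equiv 1$ gives $D_k/k\to \mb E_\rho[D_1]$ almost surely and in particular $D_k\to\infty$. Since $X_t$ is irreducible on a finite state space, the ergodic theorem yields
\[
\frac{1}{D_k}\int_0^{D_k} f(X_t)\,dt \;\longrightarrow\; \sum_{x\in V}\nu(x) f(x)\qquad \mb P_\nu\text{-a.s.},
\]
for any $f:V\to\bb R$. Combining this with Lemma \ref{t02} applied to the same $f$ produces the identity
\[
\mb E_\rho[D_1]\sum_{x\in V}\nu(x) f(x) \;=\; \sum_{z\in A}\rho(z)\,\mb E_z\!\Big[\int_0^{D_1}f(X_t)\,dt\Big], \qquad(\star)
\]
valid for every $f$.

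\emph{Step 2 (computing $\rho$).} Fix $x\in A$ and test $(\star)$ with $f=\mb 1_{\{x\}}$. For $z\in A\setminus\{x\}$, the hypothesis $\mb P_z[H_{A\setminus\{z\}}\ge U]=1$ forces the chain to avoid $x$ throughout $[0,U]$, and by the definition of $D_1$ the chain avoids $A$, hence $x$, on $(U,D_1)$; therefore $\mb E_z[\int_0^{D_1}\mb 1_{\{x\}}(X_s)\,ds]=0$. For $z=x$ the same observations reduce the integral to $\int_0^U\mb 1_{\{x\}}(X_s)\,ds$, and a regeneration argument based on the strong Markov property at the successive returns to $x$ shows that the number of visits to $x$ before $U$ is geometrically distributed with success probability $v(x)$, each visit contributing an independent $\mathrm{Exp}(W_x^{-1})$ holding time. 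Wald's identity then gives
\[
\mb E_x\!\Big[\int_0^U \mb 1_{\{x\}}(X_s)\,ds\Big] \;=\; \frac{W_x}{v(x)}\, .
\]
Inserting into $(\star)$ yields $\mb E_\rho[D_1]\,\nu(x) = \rho(x)\,W_x/v(x)$, the first equality in \eqref{rho}; summing over $x\in A$ identifies $\mb E_\rho[D_1]=(\sum_y\nu(y)v(y)W_y^{-1})^{-1}$ and produces the second equality.

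\emph{Step 3 and main obstacle.} Substituting $\rho(z)=\nu(z)v(z)W_z^{-1}\mb E_\rho[D_1]$ back into $(\star)$ and cancelling the positive factor $\mb E_\rho[D_1]$ delivers \eqref{out} for every $g$. The technical heart of the proof is the regeneration argument in Step~2: one must check that at each return to $x$ before $U$ the strong Markov property genuinely produces a fresh copy of the ``remaining time to $U$'' problem, so that the consecutive holding times at $x$ are i.i.d.\ and independent of the total number of visits. This is automatic when $U$ is a hitting time (as in the motivating example $U=H_{R(A,\ell)}$), and the stopping-time hypotheses on $U$ together with $\mb E_x[U]<\infty$ are tailored so that the recursion closes in the generality stated.
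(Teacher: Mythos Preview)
Your proof is correct and follows essentially the same approach as the paper: combine Lemma~\ref{t02} with the ergodic theorem for $X_t$ to obtain the duality identity, specialize to $f=\mb 1_{\{x\}}$ and evaluate the local time at $x$ as a geometric sum of exponentials to identify $\rho$, then feed $\rho$ back into the general identity to get \eqref{out}. The only cosmetic difference is that you establish the general duality $(\star)$ first and then specialize, whereas the paper treats the indicator case directly before returning to general $g$; the paper also asserts the geometric-sum structure without further comment, so your explicit flagging of the regeneration issue is, if anything, more careful.
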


\begin{proof}
Applying Lemma \ref{t02} to $f=1$, we obtain that $\mb P_\nu$-almost
surely 
\begin{equation}
\label{e01}
\lim_{k\to\infty} \frac {D_k}k \;=\; \lim_{k\to\infty}
\frac 1k \int_0^{D_k} dt \;=\; \mb E_\rho \big[ D_1\big]\;.
\end{equation}
By Lemma \ref{t02} with $f(y) = \mb 1\{y=x\}$, we get that $\mb
P_\nu$-almost surely
\begin{equation*}
\lim_{k\to\infty} \frac 1k \int_0^{D_k} \mb 1\{X_t =x\} \, dt 
\;=\; \rho(x) \, \mb E_x \Big[ \int_0^{D_1} \mb 1\{X_t=x\} \, dt \Big]
\end{equation*}
because starting from $y\not = x$, the process does not visit $x$
before time $D_1$. In particular, all terms on the right-hand side in
the statement of Lemma \ref{t02}, but the one $z=x$, vanish.  On the
other hand, dividing and multiplying the expression on the left-hand
side of the previous equation by $D_k$, we obtain by the ergodic
theorem and by \eqref{e01} that
\begin{equation}
\label{ED1}
\mb E_\rho \big[ D_1\big] \, \nu(x)  \;=\; \rho(x) \, 
\mb E_x \Big[ \int_0^{D_1} \mb 1\{X_t=x\} \, dt \Big]\;.
\end{equation}
The time spent at $x$ before $D_1$ is the time spent at $x$ before $U$
which is a geometric sum of independent exponential times. The success
probability of the geometric is $v(x)$ and the mean of the exponential
distributions is $W_x$. Hence, the right-hand side of the previous
formula is equal to $\rho(x) W_x/ v(x)$. This proves the first
identity in \eqref{rho}. To derive the second identity, note that $\mb
E_\rho[D_1]$ does not depend on $x$, and it is therefore only a
normalizing constant to make $\rho$ into a probability distribution.

By the ergodic theorem, for every $g:V\to \bb R$,
\begin{equation*}
\lim_{k\to\infty} \frac 1{D_k} \int_0^{D_k} g(X_t) \,
dt \;=\; \sum_{x\in V} g (x) \, \nu(x) \;.
\end{equation*}
To conclude the proof of the proposition, it remains to show that the
left hand side of this expression is equal to the left-hand side of
\eqref{out}. To this end, we will use the previous lemma.

For a function $g:V\to \bb R$, by Lemma \ref{t02} for $f=g$ and
\eqref{e01}, we get
\begin{equation*}
\lim_{k\to\infty} \frac 1{D_k} \int_0^{D_k} g(X_t) \,
dt \;=\; \lim_{k\to\infty} \frac k{D_k} \frac 1{k} \int_0^{D_k} g(X_t) \,
dt \;=\; \frac 1{\mb E_\rho \big[ D_1\big]} 
\mb E_\rho \Big[ \int_0^{D_1} g(X_t) \, dt\Big].
\end{equation*}
To conclude the proof of the proposition, it
suffices to use \eqref{rho}.
\end{proof}

\begin{corollary}
\label{corED}
We have that
\begin{equation*}
\mb E_\rho [D_1] = \frac{E_\rho \big[W_x/v(x) \big]}
{1-\nu(V \setminus A)} \;\cdot
\end{equation*}
Furthermore, for any function $g:V\to \bb R$,
\begin{equation*}
\mb E_\rho \Big[\int_0^{D_1} g(X_t)\, dt \Big] \;=\; E_\nu[g]\, 
\mb E_\rho [D_1]\;.
\end{equation*}
\end{corollary}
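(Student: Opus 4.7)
The plan is to derive both identities directly from Proposition~\ref{t01}, so that no new machinery is needed beyond careful bookkeeping of the formulas already established.

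For the first identity, I would start from the first form of $\rho(x)$ in \eqref{rho}, namely $\rho(x) = \nu(x)\, v(x)\, W_x^{-1}\, \mb E_\rho[D_1]$, and multiply by $W_x/v(x)$ before summing over $x \in A$. This immediately gives
\begin{equation*}
E_\rho\big[W_x/v(x)\big] \;=\; \sum_{x\in A} \rho(x)\, \frac{W_x}{v(x)}
\;=\; \mb E_\rho[D_1] \sum_{x\in A} \nu(x)
\;=\; \mb E_\rho[D_1]\, \big(1 - \nu(V\setminus A)\big),
\end{equation*}
and solving for $\mb E_\rho[D_1]$ yields the claimed formula.

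For the second identity, I would apply the identity \eqref{out} from Proposition~\ref{t01} to the given $g$, producing
\begin{equation*}
\sum_{x\in A} v(x)\, \nu(x)\, W_x^{-1}\,
\mb E_x\Big[ \int_0^{D_1} g(X_t)\, dt \Big] \;=\; E_\nu[g].
\end{equation*}
Using again $\rho(x) = \nu(x)\, v(x)\, W_x^{-1}\, \mb E_\rho[D_1]$ to rewrite the weights $v(x)\nu(x) W_x^{-1} = \rho(x)/\mb E_\rho[D_1]$, the left-hand side becomes $\mb E_\rho\big[\int_0^{D_1} g(X_t)\, dt\big]\big/\mb E_\rho[D_1]$, and multiplying through by $\mb E_\rho[D_1]$ delivers the desired equality.

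Both parts are essentially algebraic manipulations of formulas already present in Proposition~\ref{t01}, so there is no real obstacle; the only point requiring mild care is to use the correct form of $\rho$ (the unnormalized one that carries the factor $\mb E_\rho[D_1]$) when converting between $\rho$-averages and $\nu$-weighted sums.
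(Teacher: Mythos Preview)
Your proof is correct. The second identity is handled exactly as the paper does: combine \eqref{out} with the first form of $\rho$ in \eqref{rho}.

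For the first identity your route is slightly different and in fact cleaner. The paper splits $\mb E_\rho[D_1]$ into the time spent in $A$ (computed as $E_\rho[W_x/v(x)]$ via the geometric-sum-of-exponentials reasoning below \eqref{ED1}) and the time spent outside $A$ (computed via \eqref{out} with $g=\mb 1\{V\setminus A\}$), then solves for $\mb E_\rho[D_1]$. You instead multiply the first identity in \eqref{rho} by $W_x/v(x)$ and sum over $A$, which gives the result in one line without ever invoking \eqref{out} or the time decomposition. Your argument uses strictly less of Proposition~\ref{t01}; the paper's decomposition, on the other hand, makes the probabilistic meaning of the numerator $E_\rho[W_x/v(x)]$ (expected time in $A$ per excursion) more visible.
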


\begin{proof}
We can write
\begin{equation*}
\mb E_\rho [D_1] = \mb E_\rho \Big[ \int_0^{D_1} dt \Big] = 
\mb E_\rho \Big[ \int_0^{D_1} \mb 1 \{X_t \in A\} dt \Big] + 
\mb E_\rho \Big[\int_0^{D_1} \mb 1 \{X_t \not \in A\} dt \Big]\;.
\end{equation*}
By the same reasoning as below \eqref{ED1}, we conclude that the first
expectation in the sum above equals $E_\rho \big[W_x/v(x) \big]$. To
evaluate the second expectation, we use Proposition~\ref{t01} with $g
= \mb 1 \{V \setminus A\}$ to conclude that
\begin{equation*}
\mb E_\rho \Big[\int_0^{D_1} \mb 1 \{X_t \not \in A\} \, dt \Big] 
\;=\; \mb E_\rho [D_1] \, \nu(V \setminus A)\; .
\end{equation*}
Putting together the above equations, we conclude the proof of the
first assertion of the corollary. 

The second claim follows from the first identity in \eqref{rho} and
from \eqref{out}.
\end{proof}

\section{Topology}
\label{sec4}

We define in this section a topology on the space of {\em locally
constant functions}, where our main convergence will take place.

Fix $T>0$. For any non-empty set $F \subseteq
[0,T]$ and any $\delta >0$ define the ball $B(F,\delta)$ by
\begin{equation}
\label{ec1p}
B(F,\delta) \;=\; \bigcup_{t\in F} \{s \in [0,T]\,;\, |t-s| 
< \delta\} \; . 
\end{equation}

When $F$ is a singleton $\{t\}$, we simply write $B(t,\delta)$ instead
of $B(\{t\},\delta)$.  For any function $f: [0,T] \to \bb R$ and any point
$t \in [0,T]$, we say that $f$ is locally constant at $t$ if there
exists $\delta>0$ such that $f$ is constant in $B(t,\delta)$.  Define
the open set
\begin{equation}
\label{ec2}
\mc C(f) = \{t \in [0,T]\,;\, f \text{ is locally constant in } t\},
\end{equation}
and let $\mc D(f)$ be the closed set $\mc D(f) = \mc C(f)^c$. Let
$\Lambda$ denote the Lebesgue measure in $[0,T]$ and denote by $\mf M_0$ the
space of locally constant functions:
\begin{equation}
\label{ec3}
\mf M_0 \;:=\; \{f: [0,T] \to \bb R\,;\, \Lambda\big(\mc D(f)\big)=0\}\;.
\end{equation}

We say that two locally constant functions $f$ and $g\in \mf M_0$ are
equivalent, $f \sim g$, if $f(t) =g(t)$ for any $t \in \mc C(f) \cap
\mc C(g)$. Note that $f=g$ almost everywhere if $f \sim g$.

\begin{lemma}
\label{l1}
The relation $\sim$ is an equivalence relation in $\mf M_0$.
\end{lemma}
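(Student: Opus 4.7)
The plan is to verify the three axioms of an equivalence relation, treating reflexivity and symmetry as immediate and concentrating on transitivity, which is where the measure-theoretic content of the definition of $\mf M_0$ is used.

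Reflexivity ($f\sim f$) holds because $f(t)=f(t)$ on all of $[0,T]$, in particular on $\mc C(f)\cap\mc C(f)$. Symmetry follows because the defining condition $f(t)=g(t)$ on $\mc C(f)\cap \mc C(g)$ is manifestly symmetric in $f$ and $g$.

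For transitivity, suppose $f\sim g$ and $g\sim h$, and fix $t\in\mc C(f)\cap\mc C(h)$. I must show $f(t)=h(t)$. Using local constancy, choose $\delta>0$ so small that $f$ is constant on $B(t,\delta)$ and $h$ is constant on $B(t,\delta)$; then $f(s)=f(t)$ and $h(s)=h(t)$ for every $s\in B(t,\delta)$. The set $B(t,\delta)$ is an open interval in $[0,T]$ of positive Lebesgue measure. Since $g\in\mf M_0$, the set $\mc D(g)$ has Lebesgue measure zero, so the intersection $B(t,\delta)\cap\mc C(g)$ is nonempty; pick any $s$ in it.

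The key observation is that $\mc C(f)$ and $\mc C(h)$ are open, and $s\in B(t,\delta)\subseteq \mc C(f)\cap\mc C(h)$ (a neighborhood on which $f$ and $h$ are constant witnesses local constancy at $s$). Therefore $s\in \mc C(f)\cap \mc C(g)$, so $f\sim g$ gives $f(s)=g(s)$; and $s\in \mc C(g)\cap \mc C(h)$, so $g\sim h$ gives $g(s)=h(s)$. Combining, $f(t)=f(s)=g(s)=h(s)=h(t)$, which is what we wanted. The only nonroutine ingredient is the ability to produce the auxiliary point $s$, which I expect to be the main (mild) obstacle; it is handled simply by the fact that $\mc D(g)$ is null while $B(t,\delta)$ has positive measure.
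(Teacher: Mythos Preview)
Your proof is correct and follows essentially the same approach as the paper: reflexivity and symmetry are immediate, and transitivity is obtained by exploiting that $f$ and $h$ are constant on a neighborhood of $t$ while $\mc D(g)$ is null, so one can transfer the equality through $g$. Your version is in fact more explicit than the paper's, which compresses the argument to the observation that $f=g$ and $g=h$ almost everywhere and hence $f(t)=h(t)$ at any point where $f$ and $h$ are both locally constant.
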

\begin{proof}
  The relation $\sim$ is clearly reflexive and symmetric. To prove
  that it is also transitive, fix three functions $f$, $g$ and $h$ in
  $\mf M_0$ such that $f \sim g$ and $g\sim h$. Fix $t\in \mc C(f)
  \cap \mc C(h)$.  Since $t$ is a point where both $f$ and $h$ are
  locally constant, $f$ and $h$ are constant in a neighborhood of
  $t$. As $f=g$ almost surely and $h=g$ almost surely, we must have
  that $f(t)=h(t)$.
\end{proof}

Let $\mf M = \mf M_0/\sim$ and make the space $\mf M$ into a metric space by
introducing the distance
\begin{equation}
\label{c02}
d_T(f,g) \;=\; \inf_{A \in \mc B ([0,T])} \big\{\|f-g\|_{\infty,A^c} +
\Lambda(A)\big\}\;, 
\end{equation}
where $\mc B([0,T])$ is the set of Borel subsets of $[0,T]$, and
$\|f-g\|_{\infty,A^c}$ stands for the supremum norm of $f-g$
restricted to $A^c$.

\begin{lemma}
\label{l2}
The distance $d_T$ is well defined and it introduces a metric in $\mf M$.
\end{lemma}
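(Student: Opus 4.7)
The plan is to prove three things in turn: (i) well-definedness of $d_T$ on the quotient $\mf M$, (ii) the routine metric axioms (non-negativity, symmetry, triangle inequality), and (iii) the identity of indiscernibles, which is where the only real content sits.

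For (i), I would observe that if $f\sim f'$ then the set $N=\{t\in[0,T]:f(t)\ne f'(t)\}$ is Lebesgue-null, since the paper has already recorded that $\sim$ implies almost-everywhere equality. For any $A\in\mc B([0,T])$ appearing in the infimum for $d_T(f,g)$, I would replace $A$ by $A':=A\cup N$: then $\Lambda(A')=\Lambda(A)$ while $f\equiv f'$ on $(A')^c\subset N^c$, so
\begin{equation*}
\|f'-g\|_{\infty,A^c}\;\ge\;\|f'-g\|_{\infty,(A')^c}\;=\;\|f-g\|_{\infty,(A')^c}\;\ge\; d_T(f,g)-\Lambda(A').
\end{equation*}
Taking the infimum over $A$ yields $d_T(f',g)\ge d_T(f,g)$; symmetry gives equality, and the same argument in the second slot shows $d_T$ descends to $\mf M$.

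For (ii), non-negativity and symmetry are immediate from the definition, and $d_T(f,f)=0$ follows by taking $A=\emptyset$. For the triangle inequality, given $\varepsilon>0$ I would pick Borel sets $A$ and $B$ that are $\varepsilon$-nearly optimal for $(f,g)$ and $(g,h)$ respectively, then test the infimum for $d_T(f,h)$ against $A\cup B$: on $(A\cup B)^c$ one has $|f-h|\le|f-g|+|g-h|\le\|f-g\|_{\infty,A^c}+\|g-h\|_{\infty,B^c}$, and $\Lambda(A\cup B)\le\Lambda(A)+\Lambda(B)$, giving $d_T(f,h)\le d_T(f,g)+d_T(g,h)+2\varepsilon$. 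Letting $\varepsilon\downarrow 0$ concludes.

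For (iii), the only step with genuine content, assume $d_T(f,g)=0$. I would first deduce that $f=g$ Lebesgue-almost everywhere: for each $\eta>0$ and each $\varepsilon\in(0,\eta)$, choose $A$ with $\|f-g\|_{\infty,A^c}<\varepsilon$ and $\Lambda(A)<\varepsilon$; then $\{|f-g|\ge\eta\}\subseteq A$ and hence has measure $<\varepsilon$. Sending $\varepsilon\downarrow 0$ and then $\eta\downarrow 0$ yields $f=g$ a.e. The step I expect to be the most delicate is upgrading this to the pointwise identity that defines $\sim$: for any $t\in\mc C(f)\cap\mc C(g)$ there is a common neighborhood $B(t,\delta)$ on which both $f$ and $g$ are constant, so $f-g$ is constant on a set of positive Lebesgue measure on which it also vanishes a.e.; the constant must therefore be zero, giving $f(t)=g(t)$ and hence $f\sim g$.
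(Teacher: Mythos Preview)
Your proof is correct and follows essentially the same approach as the paper: the paper handles well-definedness by restricting the infimum to sets $A$ with $A^c\subset\mc C(f)\cap\mc C(g)$ (equivalent to your adding the null set $N$), proves the triangle inequality by exactly your union-of-near-optimizers argument, and for $d_T(f,g)=0$ goes directly to the neighborhood of a point $t\in\mc C(f)\cap\mc C(g)$ rather than first passing through a.e.\ equality, but the key observation---that $A_\epsilon$ cannot cover a neighborhood of positive measure on which $f$ and $g$ are both constant---is the same.
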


\begin{proof}
Since $\mc D(h)$ has measure zero for any $h \in \mf M_0$, in the
formula defining the distance $d_T$ we can restrict the infimum to
those sets $A$ whose complement $A^c$ is contained in $\mc C(f)\cap
\mc C(g)$.

To see that $d_T$ is well defined, note that replacing $f$ by some $f'
\sim f$ does not alter the value of $d_T(f,g)$, according to the
previous remark. Symmetry of $d_T$ is also clear.

Now suppose that $d_T(f,g)=0$. To prove that $f=g$ in $\mf M$, i.e., that
$f\sim g$, we need to show that $f(t)=g(t)$ for all $t \in \mc C(f)
\cap \mc C(g)$. Fix such point $t$ and note that $f$ and $g$ are
constant on a neighborhood $B(t)$ of $t$. Taking $\epsilon <
\Lambda(B(t))$, since $d_T(f,g)=0$, we can find a set $A_\epsilon$ in
$\mc B([0,T])$ such that $\|f-g\|_{\infty,A_\epsilon^c}
+\Lambda(A_\epsilon) \leq \epsilon$. Therefore, $B(t)\cap A_\epsilon^c
\neq \varnothing$, so that $|f(t)-g(t)| \leq \epsilon$. Since this
holds for arbitrary $\epsilon>0$, $f(t)=g(t)$, as we wanted to prove.

Finally, to prove the triangular inequality, consider three functions
$f$, $g$ and $h$ in $\mf M$. For $\epsilon >0$, let $A_\epsilon$, $B_\epsilon$
be sets in $\mc B([0,T])$ such that
\begin{equation*}
\begin{split}
& d_T(f,g) \;\geq\; \|f-g\|_{\infty,A_\epsilon^c} \;+\;
\Lambda(A_\epsilon) \;-\; \epsilon \;, \\
& \quad d_T(g,h) \;\geq\; \|g-h\|_{\infty,B_\epsilon^c} \;+\;
\Lambda(B_\epsilon) \;-\; \epsilon \;.
\end{split}
\end{equation*}
Since for any set $E$ in $\mc B([0,T])$, $\|f-h\|_{\infty,E} \le
\|f-g\|_{\infty,E} + \|g-h\|_{\infty,E}$,
\begin{equation*}
d_T(f,h) \;\leq\; \|f-h\|_{\infty,A_\epsilon^c \cap B_\epsilon^c} 
\;+\; \Lambda(A_\epsilon \cup B_\epsilon)  \;\leq\;
d_T(f,g) \;+\; d_T(g,h) \;+\; 2\epsilon \;.
\end{equation*}
Since $\epsilon$ is arbitrary, $d_T$ is a metric.
\end{proof}

The space $\mf M$ is separable with respect to the metric $d_T$, but
it is not complete. On the one hand, the set of piecewise constant
functions for which the jump points and the range are in a dense
countable set is dense in $\mf M$. On the other hand, the
function $f(t)=t$ which does not belong to $\mf M$ can be arbitrarily
approximated in the distance $d_T$ by functions in $\mf M$.

Among all elements of an equivalence class, we choose one
representative as follows. For each element $f$ of $\mf M_0$, let
$\tilde f:[0,T]\to \bb R$ be given by
\begin{equation}
\label{01}
\tilde f(t) \;=\; \frac{1}{2} \Big\{\liminf_{\substack{s \to t\\ s\in
    \mc C(f)}} f(s) + \limsup_{\substack{s \to t\\ s\in
    \mc C(f)}} f(s)\Big\}\;.
\end{equation}
When $\liminf_{s \to t , s\in \mc C(f)} f(s) = -\infty$ and
$\limsup_{s \to t , s\in \mc C(f)} f(s)= +\infty$, we set $\tilde f(t)
=0$.

Clearly, $\tilde f = f$ on $\mc C(f)$ so that $\mc D(\tilde f) \subset
\mc D(f)$, where inclusion may be strict. In particular, $\tilde f$
belongs to the equivalence class of $f$: $\tilde f \sim f$.

\begin{lemma}
\label{s10b}
We have that 
\begin{equation*}
\limsup_{\substack{s \to t\\ s\in \mc C(f)}} f(s) \;=\; 
\limsup_{\substack{s \to t\\ s\in \mc C(g)}} g(s) 
\end{equation*}
whenever $f \sim g$, with a similar identity if we replace $\limsup$
by $\liminf$. In particular, $\tilde f = \tilde g$ if $f\sim g$ and
equation \eqref{01} distinguishes a unique representative for each
equivalence class of $\mf M$.
\end{lemma}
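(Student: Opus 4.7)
The plan is to prove the $\limsup$ identity by showing both sides equal the $\limsup$ taken over the smaller set $\mc C(f)\cap\mc C(g)$, on which $f$ and $g$ coincide (since $f\sim g$). The same argument applies verbatim to $\liminf$, and combining the two yields $\tilde f=\tilde g$.

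The key observation is that both $\mc C(f)$ and $\mc C(g)$ are open, and their complements $\mc D(f)$ and $\mc D(g)$ have Lebesgue measure zero. Hence $\mc C(f)\cap\mc C(g)$ is an open set whose complement has measure zero, so it is dense in $[0,T]$, and moreover intersects every open interval in a set of positive measure. The crucial use of local constancy is the following: for every $s\in\mc C(f)$ there exists $\delta>0$ such that $f$ is constant on $B(s,\delta)$, and then $B(s,\delta)\cap\mc C(g)$ is nonempty, so we may pick $s'\in B(s,\delta)\cap\mc C(f)\cap\mc C(g)$ with $f(s')=f(s)$.

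With this in hand, let $\alpha_f:=\limsup_{s\to t,\ s\in\mc C(f)} f(s)$ and $\alpha_{fg}:=\limsup_{s\to t,\ s\in\mc C(f)\cap\mc C(g)} f(s)$. Monotonicity of $\limsup$ under restriction gives $\alpha_{fg}\le\alpha_f$. For the reverse inequality, choose a sequence $s_n\to t$ with $s_n\in\mc C(f)$ and $f(s_n)\to\alpha_f$; by the above, pick $s_n'\in B(s_n,1/n)\cap\mc C(f)\cap\mc C(g)$ with $f(s_n')=f(s_n)$. Then $s_n'\to t$ along $\mc C(f)\cap\mc C(g)$ and $f(s_n')\to\alpha_f$, giving $\alpha_{fg}\ge\alpha_f$. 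The same reasoning, with the roles of $f$ and $g$ swapped, shows $\limsup_{s\to t,\ s\in\mc C(g)} g(s)$ equals $\limsup_{s\to t,\ s\in\mc C(f)\cap\mc C(g)} g(s)$. Finally, since $f\sim g$ means $f(s)=g(s)$ for every $s\in\mc C(f)\cap\mc C(g)$, the two "intersection" limsups are identical, and the claimed equality follows.

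The $\liminf$ statement is obtained by applying the identical argument (or by considering $-f,-g$). Substituting both identities into the defining formula \eqref{01} shows that $\tilde f(t)=\tilde g(t)$ for every $t\in[0,T]$, including the convention at points where $\liminf=-\infty$ and $\limsup=+\infty$ (the two sides then agree automatically because the liminf/limsup data determining the convention is the same). Since any two representatives of a single equivalence class produce the same $\tilde{\cdot}$, formula \eqref{01} picks out a canonical element of $\mf M$. I do not see a genuine obstacle here; the only subtle point is verifying that restricting to the dense open subset $\mc C(f)\cap\mc C(g)$ does not change the one-sided extremes, which is exactly what local constancy is designed to deliver.
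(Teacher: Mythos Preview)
Your proof is correct and follows essentially the same approach as the paper: pick a sequence $s_n\in\mc C(f)$ realizing the $\limsup$, use local constancy of $f$ near $s_n$ together with $\Lambda(\mc D(g))=0$ to find nearby points $s_n'\in\mc C(f)\cap\mc C(g)$ with $f(s_n')=f(s_n)$ and $s_n'\to t$, then invoke $f\sim g$ to transfer values. The only cosmetic difference is that you route the argument through the intermediate quantity $\alpha_{fg}$ over $\mc C(f)\cap\mc C(g)$, whereas the paper directly proves the one-sided inequality $\limsup_{\mc C(f)}f\le\limsup_{\mc C(g)}g$ and appeals to symmetry; one small imprecision worth tightening is that when you write ``pick $s_n'\in B(s_n,1/n)\cap\mc C(f)\cap\mc C(g)$'' you implicitly need the radius $\min(\delta_n,1/n)$ so that $f$ is still constant on the ball, but this is clearly what you intend.
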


\begin{proof}
Consider two functions $f$, $g$ in the same equivalence class of $\mf
M$. It is enough to show that
\begin{equation*}
\limsup_{\substack{s \to t\\ s\in \mc C(f)}} f(s) \;\le\; 
\limsup_{\substack{s \to t\\ s\in \mc C(g)}} g(s) \quad\text{and}\quad
\liminf_{\substack{s \to t\\ s\in \mc C(f)}} f(s) \;\ge\; 
\liminf_{\substack{s \to t\\ s\in \mc C(g)}} g(s)\;.
\end{equation*}

We prove the first inequality, the derivation of the second one being
similar. There exists a sequence $\{s_j : j\ge 1\}$ such that $s_j\in
\mc C(f)$, $\lim_j s_j = t$,
\begin{equation*}
\limsup_{\substack{s \to t\\ s\in \mc C(f)}} f(s) \;=\;
\lim_{j\to\infty} f(s_j)\;.
\end{equation*}
Since $s_j$ belongs to $\mc C(f)$, $f$ is constant in an interval
$(s_j-\epsilon, s_j+\epsilon)$ and therefore in the interval $I_j =
(s_j-\epsilon, s_j+\epsilon) \cap (s_j-(1/j), s_j+(1/j))$. Of course,
$I_j\subset \mc C(f)$.  As $\mc D(g)$ has Lebesgue measure $0$, $\mc
C(g) \cap I_j \not = \varnothing$. Take an element $s'_j$ of this
latter set. Since $I_j$ is contained in $\mc C(f)$, $s'_j$ belongs to
$\mc C(f) \cap \mc C(g)$ so that $g(s'_j) = f(s'_j)$. Moreover, since
$f$ is constant in $I_j$ and $s_j$, $s'_j$ belong to $I_j$, $f(s_j) =
f(s'_j)$. On the hand, $\lim_j s'_j = t$ because $s_j$ converges to
$t$ and $|s'_j-s_j|<(1/j)$. Hence,
\begin{equation*}
\lim_{j\to\infty} f(s_j) \;=\; \lim_{j\to\infty} g(s'_j) \;\le\;
\limsup_{\substack{s \to t\\ s\in \mc C(g)}} g(s)\;,
\end{equation*}
which proves the lemma.
\end{proof}

 From now on when considering an equivalence class in $\mf M$, we always
refer to the representative defined by \eqref{01}. For example, $\Vert
f\Vert_\infty$, $f\in\mf M$, whose value may be different for two
distinct functions in $\mf M_0$ belonging to the same equivalence
class, means in reality $\Vert \tilde f\Vert_\infty$.

In order to obtain a compactness criterion in $\mf M$, we introduce
the following modulus of continuity. For a measurable function
$f:[0,T]\to \bb R$ and $\delta>0$, let
\begin{equation*}
\omega_\delta(f) \;=\; \Lambda\big(B(\mc D(f),\delta)\big)\;.  
\end{equation*}
The modulus of continuity $\omega_\delta(f)$ converges to $0$ as
$\delta \to 0$ if and only if $f$ belongs to $\mf M_0$. We extend this
definition to the space $\mf M$. For an equivalence class $f\in \mf M$, 
let 
\begin{equation*}
\omega_\delta(f) \;=\; \Lambda\big(B(\mc D(\tilde f),\delta)\big)\;.  
\end{equation*}
Lemma \ref{s10b} ensures that the modulus of continuity is well
defined, i.e., that $\omega_\delta(f) = \omega_\delta(g)$ if $f$ and
$g$ belong to the same equivalence class, because $\tilde f = \tilde
g$.

\begin{proposition}
\label{s09}
A subset $\mc F \subseteq \mf M$ is sequentially precompact with
respect to $d_T$ if 
\begin{equation}
\label{ec1}
\sup_{f \in \mc F} \Vert  f \Vert_\infty \;<\;
\infty \quad\text{and}\quad
\lim_{\delta \to 0} \sup_{f \in \mc F} \omega_\delta(f) \;=\; 0\;.
\end{equation}
\end{proposition}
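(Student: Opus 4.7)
The plan is to extract a Cauchy subsequence from an arbitrary sequence $\{f_n\} \subset \mc F$ by a diagonalization argument, using piecewise constant approximations whose error is controlled by $\omega_\delta$. The key topological remark I would first record is: if $I \subseteq [0,T]$ is an interval disjoint from $\mc D(\tilde f)$, then $\tilde f$ is constant on $I$. Indeed, $I \subseteq \mc C(\tilde f)$ says that $\tilde f$ is locally constant at each point of $I$, so the level sets $\{\tilde f = c\} \cap I$ are open in $I$ and partition it; connectedness of $I$ then forces only one of them to be non-empty.

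Equipped with this, I would fix $\delta_k \downarrow 0$ and partition $[0,T]$ into $N_k := \lceil T/\delta_k \rceil$ successive intervals $I_1^{(k)}, \dots, I_{N_k}^{(k)}$ of length at most $\delta_k$. For each $f \in \mc F$, define $h_k(f)$ to be constant on each $I_j^{(k)}$: equal to the common value of $\tilde f$ there when $I_j^{(k)} \cap \mc D(\tilde f) = \varnothing$ (well-defined by the remark), and equal to $0$ otherwise. Any ``bad'' interval meeting $\mc D(\tilde f)$ lies inside $B(\mc D(\tilde f), \delta_k)$, so the union $A_k(f)$ of bad intervals has measure at most $\omega_{\delta_k}(f)$, and $\tilde f = h_k(f)$ pointwise off $A_k(f)$. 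Taking $A = A_k(f)$ in \eqref{c02} yields
\begin{equation*}
d_T(f, h_k(f)) \;\leq\; \omega_{\delta_k}(f) \;\leq\; \epsilon_k\;, \qquad \epsilon_k \;:=\; \sup_{f \in \mc F} \omega_{\delta_k}(f) \,\to\, 0\;.
\end{equation*}

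To conclude, observe that with $M := \sup_{f \in \mc F}\|f\|_\infty < \infty$, each $h_k(f)$ is encoded by a vector in the compact box $[-M,M]^{N_k}$. For a given sequence $\{f_n\}$, Bolzano--Weierstrass applied separately for each $k$, together with a standard diagonal extraction, yields a subsequence $\{f_{n_j}\}$ along which $h_k(f_{n_j})$ converges uniformly in $j$, simultaneously for every $k$. The triangle inequality for $d_T$ then gives
\begin{equation*}
d_T(f_{n_j}, f_{n_{j'}}) \;\leq\; 2\epsilon_k \;+\; \|h_k(f_{n_j}) - h_k(f_{n_{j'}})\|_\infty\;,
\end{equation*}
so taking $k$ large first and then $j, j'$ large shows that $\{f_{n_j}\}$ is Cauchy in $d_T$. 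I expect the subtlest step to be the constancy observation at the start, which is what makes the piecewise constant approximation lossless off $B(\mc D(\tilde f), \delta_k)$; a minor additional point is that, because $\mf M$ is not complete, the extracted Cauchy subsequence converges a priori only in the completion of $(\mf M, d_T)$, which is the natural reading of ``sequentially precompact'' here.
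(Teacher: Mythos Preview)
Your argument is correct as a proof that every sequence in $\mc F$ has a $d_T$-Cauchy subsequence, and it is genuinely different from the paper's route. The paper does not use finite-dimensional piecewise constant approximations at all: instead it introduces the $1$-Lipschitz functions $\ell_f^\delta(t)=\dist(t,B(\mc D(\tilde f),\delta)^c)$, applies Arzel\`a--Ascoli to these together with pointwise convergence of $\tilde f_n$ on rationals, identifies a full-measure set on which the putative limit is locally constant, and then invokes Egoroff's theorem to upgrade a.e.\ convergence to $d_T$-convergence. Your approach is more elementary---no Arzel\`a--Ascoli, no Egoroff---and makes the role of $\omega_\delta$ very transparent through the bound $d_T(f,h_k(f))\le \omega_{\delta_k}(f)$.

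The one substantive difference in output is the point you yourself flag at the end. The paper's argument actually produces a limit $\hat F\in\mf M_0$ and proves $d_T(f_{n_j},\hat F)\to 0$; that is, it establishes sequential precompactness in the sense that subsequential limits lie in $\mf M$ itself, not merely in its completion. Your proof, as written, only yields a Cauchy subsequence. Since $(\mf M,d_T)$ is explicitly noted in the paper to be non-complete, this is a weaker conclusion, and the phrasing ``sequentially precompact'' in the proposition is naturally read (and proved in the paper) as the stronger one. If you want your method to match the paper's conclusion, you would need an additional step showing that the $d_T$-limit of the $h_k^*$'s (or of the $f_{n_j}$'s) is represented by an element of $\mf M_0$; this is not automatic and essentially requires identifying a full-measure locally-constant set for the limit, which is precisely what the paper's auxiliary construction accomplishes.
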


\begin{proof}
For $f \in \mc F$, define $\ell_f^\delta(t) = \dist(t, B(\mc D(\tilde
f),\delta)^c)$. Since $\ell_f^\delta$ is $1$-Lipschitz for any $f \in
\mf M$ and any $\delta >0$, the family $\{\ell_f^\delta, f \in \mc
F\}$ is equicontinuous. Fix a sequence $f_n$ in $\mc F$ and a sequence
$\{\delta_m : m\ge 1\}$ of positive numbers such that $\lim_m \delta_m
= 0$. Since $\sup_{f \in \mc F} \Vert f\Vert_\infty < \infty$,
by a standard Cantor diagonal argument, we can extract a subsequence,
still denoted by $f_n$, for which, as $n\uparrow\infty$,
$\ell_{f_n}^{\delta_m}$ converges uniformly to some function
$\ell^{\delta_m}$ for every $m$, and $\tilde f_n(t)$ converges to some
limit $F(t)$ for any rational $t$ in $[0,T]$.

Let $\epsilon_m = \limsup_{n \to \infty} \omega_{\delta_m}(f_n)$.  By
\eqref{ec1}, $\lim_m \epsilon_m = 0$.  Since $\ell_{f_n}^{\delta_m}$
converges uniformly to $\ell^{\delta_m}$ and since
$\{\ell^{\delta_m}_{f_n} \neq 0\} = B(\mc D(\tilde f_n), \delta_m)$,
\begin{equation}
\label{ec2e}
\Lambda(\ell^{\delta_m}\neq 0) \;\leq\; \limsup_{n \to \infty} 
\Lambda(\ell_{f_n}^{\delta_m} \neq 0) \;=\; \limsup_{n \to \infty} \
\omega_{\delta_m}(f_n) \;=\; \epsilon_m \;.
\end{equation}

We claim that for every $t\in [0,T]$ such that $\ell^{\delta_m}(t)=0$
for some $m\ge 1$, there exist a neighborhood $N(t)$ of $t$ and an
integer $n_0\ge 1$ for which $F$ is constant on $N(t)\cap \bb Q$ and
$\tilde f_n(t)$ is constant on $N(t)$ for $n\ge n_0$. We postpone the
proof of this claim.

As $\lim_m \epsilon_m = 0$, by \eqref{ec2e} $\lim_m
\Lambda(\ell^{\delta_m}\neq 0) = 0$. There exists therefore a
subsequence $\{m(j) : j\ge 1\}$ such that $\sum_j
\Lambda(\ell^{\delta_{m(j)}}\neq 0) <\infty$. Let $A = \cap_{k\ge
  1}\cup_{j\ge k} \{ \ell^{\delta_{m(j)}}\neq 0 \}$ so that $\Lambda
(A)=0$. If $t$ belongs to the set $A^c$, which has full measure,
$\ell^{\delta_{m(j)}} (t) = 0$ for some $j$. By the conclusions of the
previous paragraph, there exist a neighborhood $N(t)$ of $t$ and an
integer $n_0\ge 1$ for which $F$ is constant on $N(t)\cap \bb Q$ and
$\tilde f_n(t)$ is constant on $N(t)$ for $n\ge n_0$.

In view of the previous result we may define a function $\hat
F:[0,T]\to\bb R$ which vanishes on the set $A$, and which on each
element $t$ of the set $A^c$ is locally constant with value given by
the value of $F$ on a rational point close to $t$. In particular, $A^c
\subset \mc C(\hat F)$ which ensures that $\hat F$ belongs to $\mf
M_0$. Moreover, it follows from the convergence of $\tilde f_n$ to $F$
on the rationals that $\tilde f_n (t)$ converges to $\hat F(t)$. Since the
set $A$ has Lebesgue measure $0$, $f_n$ converges almost surely to
$\hat F$. Therefore, by Egoroff theorem, $f_n$ converges to $\hat F$
with respect to the metric $d_T$.

To conclude the proof of the proposition, it remains to verify the
assertion assumed in the beginning of the argument.  Fix $t\in [0,T]$
and suppose that $\ell^{\delta_m}(t) =0$ for some $m \ge 1$. In this
case, since $\ell_{f_n}^{\delta_m}(t)$ converges to
$\ell^{\delta_m}(t)=0$, $\lim_n \dist(t,B(\mc D(\tilde
f_n),\delta_m)^c) =0$. Take a point $t_n$ in the compact set $B(\mc
D(\tilde f_n),\delta_m)^c$ realizing this distance to conclude that
there exists a sequence $t_n$ converging to $t$ for which
$\ell_{f_n}^{\delta_m}(t_n)=0$.  As $\ell_{f_n}^{\delta_m}(t_n)=0$,
$\tilde f_n$ is constant in the interval
$(t_n-\delta_m,t_n+\delta_m)$.  Therefore, the functions $\tilde f_n$
are constant in a neighborhood $N(t)$ of $t$ for $n$ large
enough. Since $\tilde f_n$ converges on the rationals to $F$, we
conclude, as claimed, that $F$ is constant in $N(t) \cap \bb Q$.
\end{proof}

Another topology which can be defined in the space $\mf M$ corresponds
to the projection of the Skorohod's $M_2$ topology, which is generated
by the Hausdorff distance between the graphs of the functions. For two
equivalence classes $f$ and $g$ in $\mf M$, define the distance
$d^{(2)}_T (f,g)$ by
\begin{equation}
\label{c03}
d^{(2)}_T (f,g) \;:=\; d_H(\Gamma_{\tilde f}, \Gamma_{\tilde g})\;, 
\end{equation}
where $\tilde f$, $\tilde g$ are the representative of the equivalence
class of $f$, $g$,
\[
\Gamma_{\tilde f} = \bigcup_{t \in [0,T]} \{t\} 
\times [\liminf_{s \to t} \tilde f(s),\limsup_{s \to t} \tilde f(s)]\;,
\]
and $d_H$ is the Hausdorff distance. 

Recall the definition of the modulus of continuity $\omega_\delta(f)$
and note that $\omega_\delta(f) \geq 2\delta$ unless $f$ is
constant. Denote by $B(f;r)$, $B^{(2)}(f;r)$ the ball of center $f$
and radius $r$ with respect to the metric $d_T$, $d^{(2)}_T$,
respectively.

\begin{lemma}
\label{l3}
For any equivalence class $f \in \mf M$ and any $\delta>0$,
\[
B^{(2)}(f;\delta) \;\subseteq\; B (f;\delta + \omega_{2\delta}(f)) \;.
\]
\end{lemma}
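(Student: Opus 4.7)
The plan is, given $g \in B^{(2)}(f;\delta)$, to exhibit a single Borel set $A \subseteq [0,T]$ for which one can control both $\|\tilde f - \tilde g\|_{\infty,A^c}$ and $\Lambda(A)$, thereby reading off the inequality $d_T(f,g) < \delta + \omega_{2\delta}(f)$ directly from the definition \eqref{c02}. The natural candidate is
\[
A \;=\; B\big(\mc D(\tilde f),\, 2\delta\big)\;,
\]
whose Lebesgue measure equals $\omega_{2\delta}(f)$ by definition of the modulus of continuity.

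The first step will be a purely topological observation: for every $t \in A^c$, the function $\tilde f$ is not merely locally constant in some neighborhood of $t$, but actually constant on the full ball $B(t,2\delta)$. Indeed, since $\dist(t,\mc D(\tilde f)) \geq 2\delta$ we have $B(t,2\delta) \subseteq \mc C(\tilde f)$, and on this connected open interval every point is one of local constancy of $\tilde f$; a standard clopen argument (preimages of individual values are open by local constancy and closed in $B(t,2\delta)$ as complements of unions of open sets) forces $\tilde f$ to take a single value throughout the ball.

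The second step will exploit the Hausdorff bound on graphs. By the choice of representative in \eqref{01}, $\tilde g(t)$ sits in $[\liminf_{s\to t}\tilde g(s),\,\limsup_{s\to t}\tilde g(s)]$, so $(t,\tilde g(t)) \in \Gamma_{\tilde g}$; the hypothesis $d_T^{(2)}(f,g) < \delta$ then produces some $(s,z) \in \Gamma_{\tilde f}$ with $\max(|s-t|,\,|z-\tilde g(t)|) < \delta$. For $t \in A^c$, the location $s$ lies in $B(t,\delta) \subset B(t,2\delta)$, so by the first step $\tilde f$ is constant on an open neighborhood of $s$ with value $\tilde f(t)$; this collapses the vertical fiber of $\Gamma_{\tilde f}$ over $s$ to the single point $(s,\tilde f(t))$, forcing $z = \tilde f(t)$ and hence $|\tilde f(t) - \tilde g(t)| < \delta$. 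Taking the supremum over $t \in A^c$ gives $\|\tilde f - \tilde g\|_{\infty,A^c} \leq \delta$, and combining with $\Lambda(A) = \omega_{2\delta}(f)$ yields $d_T(f,g) \leq \delta + \omega_{2\delta}(f)$. A routine tightening (replacing $\delta$ by any $\delta' \in (d_T^{(2)}(f,g),\delta)$ and exploiting monotonicity of $\omega$ in its radius) upgrades this to the strict inequality demanded by the open ball.

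The main obstacle is the interplay between the representative selected by \eqref{01} and the vertical fibers appearing in $\Gamma_{\tilde f}$: doubling the radius from $\delta$ to $2\delta$ in the definition of $A$ is precisely what guarantees that the auxiliary point $s$ produced by the Hausdorff bound lands in a region where $\tilde f$ is already globally constant, so that its vertical fiber degenerates and no ambiguity in the second coordinate of $(s,z)$ survives. Once this geometric buffer is correctly sized, everything else reduces to bookkeeping.
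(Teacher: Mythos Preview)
Your proposal is correct and follows essentially the same approach as the paper's own proof: both take $A = B(\mc D(\tilde f),2\delta)$, observe that $\tilde f$ is constant on $B(t,2\delta)$ for each $t\in A^c$, and then use the Hausdorff bound to force $|\tilde f(t)-\tilde g(t)|\le\delta$ (the paper phrases this last step via the superset $\Sigma = [0,t-2\delta]\times\bb R \cup [0,T]\times\{\tilde f(t)\}\cup[t+2\delta,T]\times\bb R$ rather than by picking an explicit nearby point $(s,z)\in\Gamma_{\tilde f}$, but the content is identical). Your explicit verification that $(t,\tilde g(t))\in\Gamma_{\tilde g}$ via \eqref{01} and your tightening step to recover the strict inequality needed for the open ball are minor points the paper leaves implicit.
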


\begin{proof}
Fix $f \in \mf M$, $\delta>0$ and $g \in B^{(2)} (f;\delta)$. By
definition of $d_T$,
\begin{equation*}
\begin{split}
& d_T(g,f) \;=\; d_T(\tilde g, \tilde f) \;\leq\; 
\lVert \tilde f - \tilde g \rVert_{\infty,
  B(\mathcal{D}(\tilde f),2\delta)^c} 
\;+\; \Lambda \big( B(\mathcal{D}(\tilde f),2\delta) \big)\\
&\qquad = \;\lVert \tilde f - \tilde g 
\rVert_{\infty, B(\mathcal{D}(\tilde f),2\delta)^c} \;+\; 
\omega_{2\delta} (\tilde f)\;.
\end{split}
\end{equation*}
In order to evaluate the first term above, fix $t \notin
B(\mathcal{D}(\tilde f),2\delta)$ so that $\tilde f$ is constant in
$B(t, 2\delta)$. In particular, $\Gamma_{\tilde f} \subset \Sigma =
[0,t - 2\delta] \times \bb R \cup [0,T] \times \{\tilde f(t)\} \cup [t
+ 2\delta, T] \times \bb R$.  Since $d^{(2)}_T(\tilde g, \tilde f) =
d^{(2)}_T(g,f) \leq \delta$, by definition of the Hausdorff distance,
\begin{equation*}
\delta \;\geq\; \text{dist} \big( (t,\tilde g(t)) \,,\, \Gamma_{\tilde f} \big)
\;\geq\; \text{dist} \big( (t,\tilde g(t)) \,,\, \Sigma  \big) 
\;=\; 2\delta \wedge |\tilde f (t) - \tilde g (t)| \;.
\end{equation*}
This implies that $|\tilde f (t) - \tilde g (t)| \leq \delta$ for
every $t \notin B(\mathcal{D}(\tilde f),2\delta)$, which finishes the
proof of the lemma.
\end{proof}

Consider a sequence $\{Y_n : 1\le n\le \infty\}$ of real-valued
stochastic processes defined on some probability space $(\Omega, \mc
F, P)$. Assume that the trajectories of each $Y_n$, $1\le n\le
\infty$, belong to $\mf M_0$ $P$-almost surely. This is the case, for
instance, of continuous-time Markov chains taking values on a
countable subset of $\bb R$.

\begin{theorem}
\label{t1} 
Fix $T>0$. If $d^{(2)}_T(Y_n,Y_\infty)$ converges to $0$ in
probability as $n \uparrow \infty$, then $d_T(Y_n,Y_\infty)$ converges
to $0$ in probability as $n \uparrow \infty$.
\end{theorem}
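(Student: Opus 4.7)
The plan is to deduce the theorem directly from Lemma~\ref{l3}, using the hypothesis that the trajectories of $Y_\infty$ belong to $\mf M_0$ almost surely in order to control the modulus of continuity $\omega_{2\delta}(Y_\infty)$.

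Fix $\varepsilon>0$ and $\eta>0$. I would first exploit the almost sure regularity of $Y_\infty$: by definition of $\mf M_0$, the set $\mc D(\widetilde Y_\infty)$ has Lebesgue measure zero a.s., so $\omega_\delta(Y_\infty)=\Lambda(B(\mc D(\widetilde Y_\infty),\delta)) \downarrow 0$ as $\delta\downarrow 0$ a.s. By monotone convergence, one can choose $\delta=\delta(\varepsilon,\eta)>0$ with $\delta<\varepsilon/2$ such that
\begin{equation*}
P\bigl[\,\omega_{2\delta}(Y_\infty) > \varepsilon/2 \,\bigr] \;<\; \eta/2\;.
\end{equation*}

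Next, I would invoke the hypothesis that $d^{(2)}_T(Y_n,Y_\infty)\to 0$ in probability to pick $n_0$ so that
\begin{equation*}
P\bigl[\, d^{(2)}_T(Y_n,Y_\infty) > \delta \,\bigr] \;<\; \eta/2 \qquad \text{for all } n\ge n_0\;.
\end{equation*}
On the intersection of the two complementary events $\{d^{(2)}_T(Y_n,Y_\infty)\le\delta\}\cap\{\omega_{2\delta}(Y_\infty)\le\varepsilon/2\}$, Lemma~\ref{l3}, applied with $f=Y_\infty$ and $g=Y_n$, gives
\begin{equation*}
d_T(Y_n,Y_\infty) \;\le\; \delta + \omega_{2\delta}(Y_\infty) \;\le\; \delta + \varepsilon/2 \;\le\; \varepsilon\;.
\end{equation*}
A union bound then yields $P[d_T(Y_n,Y_\infty)>\varepsilon]<\eta$ for $n\ge n_0$, and since $\eta$ is arbitrary, this is exactly convergence in probability.

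Since Lemma~\ref{l3} has already been proved, the only real content is the measure-theoretic step ensuring that $\omega_{2\delta}(Y_\infty)$ is small with high probability. The mild subtlety, which I would mention explicitly but expect to cause no trouble, is that $\omega_\delta$ is a deterministic functional of the sample path and the a.s. property $\Lambda(\mc D(\widetilde Y_\infty))=0$ guarantees pointwise (in $\omega$) convergence $\omega_\delta(Y_\infty)\to 0$; monotonicity in $\delta$ then immediately gives convergence in probability and hence the uniform choice of $\delta$ above. No compactness, tightness, or Skorohod-representation argument is needed.
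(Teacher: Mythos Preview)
Your proof is correct and follows essentially the same approach as the paper's: both arguments split the event $\{d_T(Y_n,Y_\infty)>\varepsilon\}$ according to whether $\omega_{2\delta}(Y_\infty)$ is small, invoke Lemma~\ref{l3} on the good event, and use the almost-sure membership of $Y_\infty$ in $\mf M_0$ to control the bad event. The only differences are cosmetic (the paper writes $\varepsilon+\delta$ with $\delta<\varepsilon$ where you write $\varepsilon/2+\delta$ with $\delta<\varepsilon/2$).
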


\begin{proof}
It is enough to show that for each $\epsilon >0$, $\lim_{n\to \infty}
P[d_T(Y_n,Y_\infty) > 2 \epsilon] =0$. Fix $\delta < \epsilon$ so that
the previous probability is bounded by $P[d_T(Y_n,Y_\infty) > \epsilon
+ \delta]$.  This latter probability is in turn less than or equal to
\begin{equation*}
P \big[ d_T(Y_n,Y_\infty) > \epsilon + \delta \,,\,
\omega_{2\delta} (\tilde Y_\infty) \le \epsilon \big]
\;+\; P \big[ 
\omega_{2\delta} (\tilde Y_\infty) > \epsilon \big]\;.
\end{equation*}
Since $Y_\infty$ has trajectories in $\mf M_0$ $P$-almost surely, the
second term vanishes as $\delta\downarrow 0$. The first one is bounded
by $P [ d_T(Y_n,Y_\infty) > \delta + \omega_{2\delta} (\tilde
Y_\infty)]$ which by the previous lemma is less than or equal to $P [
d^{(2)}_T(Y_n,Y_\infty) > \delta ]$. By assumption, this term vanishes
as $n\uparrow\infty$.
\end{proof}

Assume that in the probability space $(\Omega, \mc F, P)$ introduced
before the statement of the previous theorem is also defined a
sequence $\{X_n : 1\le n< \infty\}$ of real-valued stochastic
processes whose trajectories belong to $\mf M_0$ $P$-almost surely.

\begin{corollary}
\label{c1}
Fix $T>0$. If both $d_T(X_n,Y_n)$ and $d^{(2)}_T(Y_n,Y_\infty)$
converge to zero in probability as $n\uparrow\infty$, then
$d_T(X_n,Y_\infty)$ also converges to zero in probability as
$n\uparrow\infty$.
\end{corollary}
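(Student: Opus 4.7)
The plan is to reduce the corollary to the triangle inequality for $d_T$ combined with Theorem~\ref{t1}. Specifically, I would invoke Lemma~\ref{l2} to get
\[
d_T(X_n, Y_\infty) \;\leq\; d_T(X_n, Y_n) \;+\; d_T(Y_n, Y_\infty)
\]
for every $n$ almost surely. So it suffices to show that each summand on the right converges to zero in probability, since then for any $\epsilon > 0$,
\[
P\big[d_T(X_n, Y_\infty) > \epsilon\big] \;\leq\;
P\big[d_T(X_n, Y_n) > \epsilon/2\big] \;+\;
P\big[d_T(Y_n, Y_\infty) > \epsilon/2\big],
\]
and both terms vanish as $n \to \infty$.

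The first summand, $d_T(X_n, Y_n)$, tends to zero in probability directly by hypothesis. For the second summand, $d_T(Y_n, Y_\infty)$, I would appeal to Theorem~\ref{t1}: the assumed convergence $d^{(2)}_T(Y_n, Y_\infty) \to 0$ in probability, together with the hypothesis that $Y_\infty$ has trajectories in $\mf M_0$ almost surely (which is part of the standing assumption for Theorem~\ref{t1}), yields $d_T(Y_n, Y_\infty) \to 0$ in probability.

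There is essentially no obstacle: the entire content of the corollary is that $d_T$ is a bona fide metric (so the triangle inequality applies, as already proved in Lemma~\ref{l2}) and that Theorem~\ref{t1} bridges the gap between the $d^{(2)}_T$-hypothesis on the pair $(Y_n, Y_\infty)$ and the $d_T$-hypothesis needed to close the triangle with $X_n$. The only minor point to verify is that $X_n$ itself having trajectories in $\mf M_0$ almost surely is part of the standing setup preceding the corollary, so $d_T(X_n, Y_n)$ and $d_T(X_n, Y_\infty)$ are indeed well defined as random variables.
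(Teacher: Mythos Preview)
Your proposal is correct and follows exactly the intended route: the paper states this as a corollary without proof precisely because it follows immediately from Theorem~\ref{t1} (which upgrades $d^{(2)}_T$-convergence to $d_T$-convergence for the pair $(Y_n,Y_\infty)$) combined with the triangle inequality for $d_T$ established in Lemma~\ref{l2}. Your verification that the standing hypotheses on trajectories in $\mf M_0$ make all the relevant distances well defined is the only thing to check, and you have done so.
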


\section{Main result}
\label{sec6}

We prove in this section that under certain assumptions the
continuous time Markov process $X^N_t$, introduced in Section
\ref{sec1}, is close, in an appropriate time scale and with respect to
the topology introduced in Section~\ref{sec4}, to a simple random walk
$Y^N_t$ which only visits the set $A_N$ of the deepest traps and which
has identically distributed jump probabilities: $p_N(x,y) =
\rho_N(y)$, $x$, $y\in A_N$.  For such result we need, roughly
speaking, the set of deepest traps $A_N$
\begin{itemize}
\item to support most of the stationary measure $\nu$.
\item to consist of well separated points, 
\item to be unlikely to be hit in a short time,
\item to have comparable escape probabilities from different points.
\end{itemize}

The main result presented below holds in a more general context than
the one described in Section \ref{sec1}.  We suppose throughout this
section that $\{G_N : N\ge 1\}$ is a sequence of finite, connected,
vertex-weighted graphs, where $\{W^N_x : x\in V_N\}$ represents the
positive weights. The vertices of $V_N$ are enumerated in decreasing
order of weights, $V_N = \{x^N_1, \dots, x^N_{|V_N|}\}$, $W^N_{x_j}
\ge W^N_{x_{j+1}}$, $1\le j\le |V_N|-1$.

Denote by $X^N_t$ the Markov process on $V_N$ with generator given by
\eqref{f05}. We do not assume  that the depths $W^N_x$ are chosen
according to \eqref{13}, but we impose some conditions presented
below in ({\bf A0})--({\bf A3}).

We write in this section $J_N\uparrow\infty$ to represent an
increasing sequence of natural numbers $\{J_N : N\ge 1\}$ such that
$\lim_{N\to \infty} J_N = \infty$.  To keep notation simple, we
sometimes omit the dependence on $N$ of states, measures and sets.

Recall that $\nu=\nu_N$, defined in \eqref{nu}, is the stationary
measure of the random walk $X^N_t$. Assume that $\nu(B^c_N)$ vanishes
asymptotically for any sequence of subsets $B_N = \{x_1^N, \dots,
x_{J_N}^N\}\subset V_N$ such that $J_N\uparrow\infty$:
\begin{equation}
\label{A0}
\tag{\bf A0}
\lim_{N\to\infty}  \nu_N(B_N^c) \;=\; 0\;.
\end{equation}

Fix three sequences $M_N\uparrow\infty$, $\ell_N\uparrow\infty$ and
$L_N\uparrow\infty$, $M_N\le |V_N|$. The sequence $M_N$ represents the
number of deep traps selected, and $\ell_N$ a lower bound on the
minimal distance among these deepest traps.  We formulate three
assumptions on these sequences.  Let $A_N = \{x^N_1, \dots,
x^N_{M_N}\}$ be the set of the deepest traps.  We first require the
deepest traps to be well separated:
\begin{equation}
\label{A1}
\tag{\bf A1}
d(x^N_i, x^N_j) > 2\ell_N +1 \;, \quad 1\le i \not = j\le M_N 
\end{equation}
for all $N$ large enough.  This condition, which is analogous to
condition \eqref{BB0}, ensures that any path $\{x^N_i = z_0, z_1,
\dots, z_m=x^N_j\}$ from $x^N_i$ to $x^N_j$ has a state $z_k$ which
belongs to $R(A_N, \ell_N)$.

The second assumption is somehow related to \eqref{e:c4} and requires,
as explained below, the different escape probabilities $v_x$, $x \in
A_N$, to have similar order of magnitude.  For a subset $B$ of $V_N$,
let $\nu_B$ be the measure $\nu$ conditioned on $B$:
\begin{equation*}
\nu_B(x) \;=\; \frac{W^N_x \, \deg(x)}{\sum_{y\in B} W^N_y \,
  \deg(y)}\;, \quad x\in B\;.
\end{equation*}
Expectation with respect to $\nu_B$ is denoted by $E_{\nu_{B}}$.

We suppose that there exists a sequence $\{\beta_N : N\ge 1\}$ such
that for any sequence of subsets $B_N = \{x^N_1, \dots,
x^N_{J_N}\}\subset A_N$ such that $|B_N|= J_N\uparrow\infty$
\begin{equation}
\tag{\bf A2}
\label{A2}
\limsup_{N \to \infty}  E_{\nu_{B}}
\Big[\frac{W^N_x}{\beta_N\, v_\ell(x)}\Big] \;<\; \infty\;, \quad
\limsup_{N \to \infty} \frac 1{|B_N|} \, E_{\nu_{B}}
\Big[\frac{\beta_N\, v_\ell(x)}{W^N_x}\Big] \;<\; \infty\;.
\end{equation}
This hypothesis postulates essentially a law of large numbers for
$\deg(x_j)\, v_\ell(x_j)$ and a bound for the sum of $(W^N_{x_j})^2
\deg(x_j)/v_\ell(x_j)$.

In analogy with \eqref{BB1}, we will also assume that the hitting time
of $A_N$ is much smaller than the mixing time of the discrete-time
random walk on $G_N$. For $L \ge 1$ let
\begin{equation}
\label{03}
\kappa_N \;=\; \kappa (L, M_N, \ell_N) 
\;=\; \max_{x\in A_N} \max_{z \not\in B(x, \ell_N)} 
\mb P^N_z \big[ \HH_{x} <  L t_{\text{mix}}^N \big]\; .
\end{equation}
Assume that for some sequence $L_N \uparrow \infty$,
\begin{equation}
\tag{\bf A3}
\label{A3}
\lim_{N \to \infty} M_N^3 \,  \kappa_N \;=\; 0\;,
\quad \lim_{N \to \infty} M_N^2 \, 2^{-L_N} \;=\; 0  \;.
\end{equation}

\begin{remark}
\label{s20}
Consider three sequences $M_N\uparrow\infty$, $\ell_N\uparrow\infty$ and
$L_N\uparrow\infty$ satisfying {\rm ({\bf A0})--({\bf A2})} and such that
\begin{equation}
\label{06}
\lim_{N \to \infty} \kappa(L_N, M_N, \ell_N)  \;=\; 0\;.
\end{equation}
Then, there exists a sequence $M'_N\uparrow\infty$, $M_N'\le M_N$, for
which the three sequence $M_N'$, $\ell_N$, $L_N$ satisfy {\rm ({\bf
    A0})--({\bf A3})}.
\end{remark}

Indeed, it follows from \eqref{06} and the fact that
$L_N\uparrow\infty$ that there exists a se\-quence $K_N\uparrow\infty$
such that $\lim_{N\to\infty} K_N^2 \, 2^{-L_N} =0$, $\lim_{N\to\infty}
K_N^3 \, \kappa(L_N, M_N, \ell_N) =0$. Define a new sequence $M'_N$ by
${M}'_N = \min\{M_N, K_N\}$ and define $A'_N$ accordingly. Since
$A'_N\subset A_N$ and $\kappa'_N\le \kappa_N$, ({\bf A0})--({\bf A3})
hold for the sequences $M'_N$, $\ell_N$, $L_N$. \smallskip

Hence, in the applications, if one is able to prove \eqref{06}, one
can redefine the sequence $M_N$ to obtain ({\bf A3}) which is the
condition assumed in the main result of this section.  Moreover, if a
sequence $M_N$ satisfies conditions ({\bf A1}), ({\bf A2}),
\eqref{06}, then any sequence $M'_N\uparrow\infty$ which increases to
infinity with $N$ at a slower pace than $M_N$, $M'_N \le M_N$, also
satisfies these three conditions. The same observation holds for the
sequence $L_N$. Hence, in the applications, both sequences shall
increase very slowly to infinity, in a way that ({\bf A3}) is
fulfilled, and all the problem rests on the identification of a
convenient space scale $\ell_N$, large for the process to mix before
returning to a state, as required in condition \eqref{06}, but not too
large, to permit a good description of a ball of radius $\ell_N$ and a
good estimate of the escape probability $v_\ell(x)$.

Let $\rho_N$ be the probability measure on the set $A_N$ given by
\begin{equation}
\label{rho2}
\rho_N(x_j) = \frac{\deg(x_j)\, v_\ell(x_j)}
{\sum_{1\le i \leq M_N} \deg(x_i)\, v_\ell(x_i)}\;,
\end{equation}
where $v_\ell(x_j)=v^N_{\ell_N}(x_j)$ is the escape probability
introduced in \eqref{f01}. By \eqref{nu}, $\rho_N$ can also be written
as
\begin{equation}
\label{rho3}
\rho_N(x_j) = \frac{\nu(x_j) v_\ell(x_j) W^{-1}_{x_j}}{\sum_{1\le i \le M_N} 
\nu(x_i) v_\ell(x_i) W^{-1}_{x_i}}\;,
\end{equation}
which corresponds to \eqref{rho} with $U = H_{R(A_N,\ell_N)}$.

For each $N \ge 1$, consider the continuous-time Markov process
$\{Y^N_t : t \ge 0\}$ on $A_N$ defined as follows. While at $x\in A_N$
the process waits a mean $W^N_x /v_\ell(x)$ exponential time at the
end of which it jumps to $y \in A_N$ with probability $\rho_N(y)$. Note that
the jump distribution is independent of the current state and that the
process may jump to its current state since we did not impose $y$ to
be different from $x$. Moreover, the probability measure
$\nu^N(x)/\nu^N(A_N)$ is the (reversible) stationary state of the
Markov chain $\{Y^N_t : t\ge 0\}$.

We are now in a position to state the main result of this paper, from
which we will deduce Theorems~\ref{t:trans} and \ref{t:random}.

\begin{theorem}
\label{dXYf}
Suppose that conditions {\rm ({\bf A0})--({\bf A3})} are in
force. Then, for every $N\ge 1$, there exists a coupling $Q_N$ between
the stationary, continuous-time Markov chain $\{Y^N_{\beta_Nt} : t\ge
0\}$ described above, and the Markov chain $\{X_{\beta_Nt}^N : t\ge
0\}$ such that $Q_N[X_{0}^N=Y_{0}^N=y]=\rho(y)$, $y\in A_N$, and
\begin{equation*}
\lim_{N\to\infty} Q_N \big[ d_T(X^N_{\beta_N\cdot}, Y^N_{\beta_N\cdot}) >
\delta \big] \;=\; 0
\end{equation*}
for every $T \geq 0$ and $\delta>0$, where $d_T$ stands for the
distance introduced in \eqref{c02}.
\end{theorem}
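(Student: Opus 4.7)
The plan is a two-stage coupling argument; throughout I take $X^N_0 = Y^N_0 \sim \rho_N$.

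\textbf{Stage 1.} Let $D_k$ denote the successive times at which $X^N$ returns to $A_N$ after escaping the $\ell_N$-ball around its current deep trap, so that each cycle has the form of Section~\ref{sec:general} with $U = H_{R(A_N,\ell_N)}$, and define the piecewise-constant trace extension $\tilde X^N_t := X^N(D_k)$ for $t \in [D_k, D_{k+1})$. By Proposition~\ref{t01}, the embedded chain $\hat X^N_k := X^N(D_k)$ is irreducible on $A_N$ with stationary law $\rho_N$ given by \eqref{rho2}. Since $X^N_t = \tilde X^N_t$ whenever $X^N_t \in A_N$,
\begin{equation*}
d_T(X^N_{\beta_N\cdot},\tilde X^N_{\beta_N\cdot}) \;\le\; \Lambda\{t\in[0,T]: X^N_{\beta_N t}\notin A_N\};
\end{equation*}
Corollary~\ref{corED} applied to $g = \mb 1\{V_N\setminus A_N\}$ shows that the right-hand side has expectation $T\nu_N(A_N^c)$, which vanishes by \eqref{A0}, and Markov's inequality finishes the stage.

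\textbf{Stage 2.} I would couple $\tilde X^N$ with $Y^N$ through two independent mechanisms. For the jump targets, Lemma~\ref{s03} gives that the one-step transition of $\hat X^N_k$ starting from $x_i \in A_N$ lies within total variation $O(M_N(\kappa_N + 2^{-L_N}))$ of $\rho_N$, uniformly in $x_i$; a standard maximal coupling realises $\hat X^N_k = Z_k$ for an i.i.d.\ $\rho_N$-sequence $Z_k$ over all jumps made in $[0,T\beta_N]$, with total failure probability at most this error times the number of cycles $K$. The upper bound $K = O(TM_N)$ obtained from the second part of \eqref{A2}, together with \eqref{A3}, makes this probability vanish. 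For the holding times, the crucial observation is that the total time $X^N$ spends \emph{at} any deep trap $x$ during one escape cycle, being a geometric sum with success probability $v_\ell(x)$ of i.i.d.\ $\mathrm{Exp}(W^N_x)$ waits, is itself $\mathrm{Exp}(W^N_x/v_\ell(x))$, exactly matching the holding law of $Y^N$ at $x$; one couples these one-to-one.

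Under this coupling, $\tilde X^N$ and $Y^N$ visit the same sequence of deep traps for the same length of time, but $\tilde X^N$'s $k$-th stage is delayed by the cumulative excursion lag $s_{k-1} = \sum_{j<k}\Delta_j$. A stage-by-stage computation bounds the disagreement measure by $(1/\beta_N)\sum_{k\le K}(s_{k-1}\wedge \tau_k)$, whose expectation, combined with the first bound in \eqref{A2} (controlling the typical holding $\mathbb{E}_\rho[\tau] \asymp \beta_N$) and Corollary~\ref{corED} (giving $\mathbb{E}_\rho[\Delta] \asymp \mathbb{E}_\rho[\tau]\nu_N(A_N^c)$), is of order $T^2\nu_N(A_N^c)$ in the typical regime. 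A Markov inequality then gives $d_T(\tilde X^N_{\beta_N\cdot}, Y^N_{\beta_N\cdot})\to 0$, and the triangle inequality for $d_T$ combines both stages. The main obstacle lies precisely in this final estimate: one must control jointly the number of cycles $K$, the cumulative lag $s_K$, and the per-stage minima $\tau_k\wedge s_{k-1}$, which requires invoking the two moment estimates in \eqref{A2} in a careful balance together with the freedom to shrink $M_N$ via Remark~\ref{s20} in case one needs $M_N\nu_N(A_N^c) \to 0$ to conclude.
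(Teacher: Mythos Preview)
Your overall architecture---coupling the embedded chain $\hat X^N_k = X^N(D_k)$ to an i.i.d.\ $\rho_N$-sequence, matching holding times exactly, and controlling the accumulated excursion lag---is the same as the paper's. However, two steps do not close under assumptions ({\bf A0})--({\bf A3}) as you present them.

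\textbf{Stage 1.} The claim that $\mb E_{\rho_N}\!\big[\Lambda\{t\le T:X^N_{\beta_N t}\notin A_N\}\big] = T\,\nu_N(A_N^c)$ is not justified: $\rho_N$ is stationary for the embedded chain $X^N(D_k)$, not for $X^N_t$ itself, so Corollary~\ref{corED} does not apply over a fixed time window. The paper fixes a deterministic number of cycles $K_N$, shows $D_{K_N}>\beta_N T$ with high probability via a variance bound (this is condition~\eqref{B5}), and only then invokes Corollary~\ref{corED} on the interval $[0,D_{K_N}]$ to get the bound $(K_N/\beta_N)\,\nu_N(A_N^c)\,\mb E_\rho[D_1]$, which is condition~\eqref{B2}.

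\textbf{Stage 2.} The real gap is in the lag estimate. Your bound $\sum_{k\le K}(s_{k-1}\wedge\tau_k)$ is correct and even sharper per-cycle than the paper's, but summing it requires, after any reasonable Cauchy--Schwarz or moment argument, a condition of the type $M_N\,\nu_N(A_N^c)\to 0$. This is \emph{not} implied by ({\bf A0})--({\bf A3}): take for instance $\nu_N(\{x_1,\dots,x_M\}^c)\asymp 1/\log M$, which satisfies ({\bf A0}) yet $M\cdot(1/\log M)\to\infty$. Invoking Remark~\ref{s20} to shrink $M_N$ does not help, because replacing $A_N$ by a smaller $A_N'$ only \emph{increases} $\nu_N({A_N'}^c)$.

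The missing idea is the introduction of an intermediate set $B_N=\{x_1,\dots,x_{I_N}\}\subset A_N$ with $I_N$ chosen so that $I_N\,\nu_N(A_N^c)\to 0$ (possible precisely because $\nu_N(A_N^c)\to 0$) while still $\nu_N(B_N^c)\to 0$ (by ({\bf A0})). One then splits the disagreement between $\tilde X^N$ and $Y^N$ into the time $Y^N$ spends in $A_N\setminus B_N$, controlled by $\nu_N(B_N^c)$ via \eqref{B3}, and the number $\mf N_N$ of visits to $B_N$ times the total lag $\Delta_N$, controlled by \eqref{B4} since $\mb E[\mf N_N]=K_N\rho_N(B_N)$ carries the crucial small factor $\rho_N(B_N)$. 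This two-scale decomposition is what makes the argument close; see Lemmas~\ref{s00}, \ref{sd01} and Proposition~\ref{dXY}.
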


\begin{corollary}
\label{s05}
Consider a sequence $\{z^N : N\ge 1\}$, $z^N\in A_N$, such that
\begin{equation*}
\liminf_{N\to\infty} \rho_N(z^N) > 0\;.
\end{equation*}
The statement of Theorem \ref{dXYf} remains in force if in the
assumptions we replace the property $Q_N[X^N_0=Y^N=y]=\rho(y)$, $y\in
A_N$, by the property $Q_N[X^N_0=Y^N=z^N]=1$.
\end{corollary}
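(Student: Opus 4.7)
The plan is to deduce the corollary from Theorem~\ref{dXYf} by conditioning the stationary coupling on the event that both chains start at the prescribed vertex $z^N$. The key observation is that in the coupling $Q_N$ provided by Theorem~\ref{dXYf}, the marginal over $(X_0^N, Y_0^N)$ is concentrated on the diagonal of $A_N \times A_N$, and assigns mass exactly $\rho_N(y)$ to $(y,y)$. Since by hypothesis $\rho_N(z^N)$ stays bounded away from zero, conditioning on $\{X_0^N = Y_0^N = z^N\}$ yields a new coupling $\widetilde{Q}_N$ for which the initial state is deterministically $z^N$, and the cost of conditioning is only a division by $\rho_N(z^N)$.

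Concretely, I would define
\[
\widetilde{Q}_N(\,\cdot\,) \;=\; Q_N\big(\,\cdot \,\big|\, X_0^N = Y_0^N = z^N\big),
\]
which is well defined for $N$ large since $\rho_N(z^N) \geq c > 0$ eventually by assumption. Under $\widetilde{Q}_N$ the chain $X^N$ is still the Markov process with generator $\mc L_N$, now started at $z^N$ (by the strong Markov property applied at time $0$), and $Y^N$ is still the Markov chain described before Theorem~\ref{dXYf}, started at $z^N$; no other property of the joint law is destroyed by conditioning on a $\sigma(X_0^N, Y_0^N)$-measurable event.

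Then, for any $\delta > 0$ and $T > 0$, the elementary inequality $P(A \mid B) \le P(A)/P(B)$ gives
\[
\widetilde{Q}_N\big[ d_T(X^N_{\beta_N\cdot}, Y^N_{\beta_N\cdot}) > \delta \big]
\;\le\; \frac{Q_N\big[ d_T(X^N_{\beta_N\cdot}, Y^N_{\beta_N\cdot}) > \delta \big]}{\rho_N(z^N)}.
\]
The numerator vanishes as $N \to \infty$ by Theorem~\ref{dXYf}, and the denominator is bounded below by a positive constant for all $N$ large enough by the assumption $\liminf_N \rho_N(z^N) > 0$. Hence the left-hand side tends to $0$, which is exactly the claim.

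There is essentially no obstacle here: the only point worth checking carefully is that conditioning on $\{X_0^N = Y_0^N = z^N\}$ preserves the dynamics of each marginal process after time $0$, which follows from the Markov property and the fact that the event conditioned upon depends only on the initial coordinates. The quantitative control comes for free from $\liminf_N \rho_N(z^N) > 0$.
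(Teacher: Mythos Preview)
Your proof is correct and follows exactly the approach of the paper, which simply says the corollary follows from Theorem~\ref{dXYf} by conditioning on the event $X^N_0=z^N$. You have merely spelled out the details of this conditioning argument more carefully than the paper does.
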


\begin{proof}
The assertion of this corollary follows from Theorem \ref{dXYf} by
conditioning on the event $X^N_0=z^N$.
\end{proof}

Theorem \ref{dXYf} follows from Lemmas \ref{s00}, \ref{sd01} and
Proposition \ref{dXY} below. Theorem \ref{dXYf} asserts that the
process $X_{\beta_Nt}^N$ is close to the process $Y_{\beta_Nt}^N$
which jumps at rate $\beta_N v_\ell(x)/W^N_x$. If this latter
expression is not of order one, the asymptotic behavior of
$Y^N_{\beta_Nt}$ will not be meaningful and our approximation of
$X_{\beta_Nt}^N$ by $Y_{\beta_Nt}^N$ devoid of interest. Hence, in the
applications we expect
\begin{equation*}
\beta_N \;\approx\; \frac{W^N_{x_j}}{v_\ell(x_j)} \;\cdot
\end{equation*}

\begin{lemma}
\label{s00}
Assume that hypotheses {\rm ({\bf A0})--({\bf A3})} are in
force. Then, there exists a subset $B_N =\{x^N_1, \dots,
x^N_{J_N}\}\subset A_N$ such that,
\begin{eqnarray}
\label{e:c1}
&& \lim_{N \to \infty} \kappa_N \, M_N\, 
\frac{\beta_N}{E_{\rho}\Big[\frac{W^N_x}{v_\ell(x)}\Big]} \;=\; 0\; , \\
\label{e:c2}
&&\quad \lim_{N \to \infty} \nu(B_N^c) \;=\; 0\;, \\
\label{e:c3}
&&\qquad \limsup_{N \to \infty} \frac{\beta_N} 
{E_{\rho}\Big[\frac{W^N_x}{v_\ell(x)}\Big]}
\, \nu(A_N^c) \,\rho(B_N) \;=\;0\;.
\end{eqnarray}
\end{lemma}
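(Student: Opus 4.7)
The plan is to recast $\alpha_N := E_\rho[W^N_x/v_\ell(x)]$, which appears in the denominator of all three conclusions, in a closed form that matches the left-hand side of hypothesis~\eqref{A2}. Using the expression \eqref{rho3} for $\rho_N$ together with the identity $\nu(x)\,v_\ell(x)/W^N_x = \deg(x)\,v_\ell(x)/Z_N$ coming from \eqref{nu}, one obtains the two ``dual'' identities
\begin{equation*}
\alpha_N \;=\; \frac{Z_N\,\nu(A_N)}{\Gamma_\ell(A_N)}\;,\qquad
E_{\nu_B}\!\left[\frac{\beta_N\, v_\ell(x)}{W^N_x}\right] \;=\; \frac{\beta_N\,\Gamma_\ell(B)}{Z_N\,\nu(B)}\;,
\end{equation*}
valid for any $B\subset A_N$, where $\Gamma_\ell(B)=\sum_{x\in B}\deg(x)v_\ell(x)$ is the quantity used in Lemma~\ref{s03}.

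Applied to $B=A_N$ (admissible since $M_N\uparrow\infty$), the second inequality in \eqref{A2} then reads $\beta_N\,\Gamma_\ell(A_N)/(M_N\, Z_N\,\nu(A_N)) \leq C$ for all large $N$, i.e.\ $\beta_N/\alpha_N \leq C\,M_N$. Combined with the first part of \eqref{A3}, which in particular gives $M_N^{2}\kappa_N\to 0$, this yields \eqref{e:c1} at once, since
\begin{equation*}
\kappa_N\,M_N\,\frac{\beta_N}{\alpha_N} \;\leq\; C\,M_N^{2}\,\kappa_N \;\longrightarrow\; 0\;.
\end{equation*}

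For \eqref{e:c2} and \eqref{e:c3} I would first invoke \eqref{A0} applied to the prefix of length $M_N$ to obtain $\nu(A_N^c) \to 0$, and then choose $J_N \uparrow \infty$ with $J_N \leq M_N$ slowly enough that $J_N\,\nu(A_N^c) \to 0$; for example $J_N = \min\{M_N,\, \lfloor \nu(A_N^c)^{-1/2}\rfloor\}$. With $B_N = \{x^N_1,\dots,x^N_{J_N}\}$ defined by this $J_N$, \eqref{e:c2} is immediate from \eqref{A0}. For \eqref{e:c3}, using $\rho(B_N) = \Gamma_\ell(B_N)/\Gamma_\ell(A_N)$, the $\Gamma_\ell(A_N)$'s cancel and one is left with
\begin{equation*}
\frac{\beta_N}{\alpha_N}\,\nu(A_N^c)\,\rho(B_N) \;=\; \frac{\beta_N\,\Gamma_\ell(B_N)}{Z_N\,\nu(A_N)}\,\nu(A_N^c)\;.
\end{equation*}
Applying the second part of \eqref{A2} to the chosen $B_N$ now bounds $\beta_N\,\Gamma_\ell(B_N) \leq C\, J_N\, Z_N\, \nu(B_N) \leq C\, J_N\, Z_N\,\nu(A_N)$, whence the right-hand side is at most $C\,J_N\,\nu(A_N^c) \to 0$ by our choice of $J_N$.

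The entire argument is algebraic once the two identities of the first paragraph are established; the only mildly delicate step is the selection of $J_N$, which must tend to infinity slowly enough for $J_N\,\nu(A_N^c)$ to vanish — routine because $\nu(A_N^c)\to 0$ by \eqref{A0}. There is no genuine obstacle beyond recognizing that the reformulation $\alpha_N=Z_N\nu(A_N)/\Gamma_\ell(A_N)$ turns hypothesis \eqref{A2} into the very bound on $\beta_N/\alpha_N$ that the three conclusions demand.
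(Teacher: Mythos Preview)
Your proof is correct and follows essentially the same route as the paper's: rewrite $\beta_N/E_\rho[W^N_x/v_\ell(x)]$ via the definition of $\rho$ so that the second bound in \eqref{A2} (with $B=A_N$) yields $\beta_N/\alpha_N\le C M_N$, giving \eqref{e:c1} via \eqref{A3}; then pick $J_N\uparrow\infty$ slowly enough that $J_N\,\nu(A_N^c)\to 0$, and bound the quantity in \eqref{e:c3} by $E_{\nu_{B_N}}[\beta_N v_\ell(x)/W^N_x]\,\nu(A_N^c)\le C\,J_N\,\nu(A_N^c)$ via \eqref{A2}. The only cosmetic point is that your concrete choice $J_N=\min\{M_N,\lfloor\nu(A_N^c)^{-1/2}\rfloor\}$ need not be monotone, whereas \eqref{A2} is stated for increasing $J_N$; this is trivially fixed by passing to a monotone minorant, as the paper implicitly does by first selecting an increasing $K_N$ with $K_N\,\nu(A_N^c)\to 0$.
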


\begin{proof}
We start proving \eqref{e:c1}. By definition of the probability
measure $\rho_N$ this expression is equal to
\begin{equation*}
\kappa_N \, M^2_N \, \frac 1{M_N} \, E_{\nu_{A}}
\Big[\frac{\beta_N\, v_\ell(x)}{W^N_x}\Big]  \;.
\end{equation*}
This term vanishes as $N\uparrow\infty$ in view of ({\bf A3})
and ({\bf A2}) with $B_N=A_N$.

By ({\bf A0}), $\nu(A_N^c)$ vanishes as $N\uparrow\infty$.  There
exists, therefore, a sequence $K_N\uparrow\infty$ such that
$\lim_{N\to\infty} K_N \nu(A_N^c) =0$. Let $B_N =\{x^N_1, \dots
x^N_{J_N}\}$, where $J_N = \min\{M_N, K_N\}$ so that $|B_N| \nu(A_N^c)
\to 0$.  The second assertion of the lemma follows from assumption
({\bf A0}) because $J_N\uparrow\infty$.  Moreover, as
\begin{equation*}
\beta_N\, E_{\rho}\Big[\frac{W^N_x}{v_\ell(x)}\Big]^{-1}
\,\rho(B_N) \;\le\; E_{\nu_{B}}
\Big[\frac{\beta_N\, v_\ell(x)}{W^N_x}\Big] \;,
\end{equation*}
by ({\bf A2}) and by definition of the set $B_N$, we have that
\begin{equation*}
\limsup_{N \to \infty} \frac{\beta_N} {E_{\rho}\Big[\frac{W^N_x}{v_\ell(x)}\Big]}
\, \nu(A_N^c) \,\rho(B_N) \;\le\; 
C_0 \, \limsup_{N \to \infty} |B_N| \, \nu(A_N^c) \;=\;0
\end{equation*}
for some finite constant. This concludes the proof of the lemma. 
\end{proof}

\begin{lemma}
\label{sd01}
Assume that conditions {\rm ({\bf A2}), ({\bf A3}),
  \eqref{e:c1}--\eqref{e:c3}} are in force. Then, there exists a
sequence $\{K_N:N\ge 1\}$ such that
\begin{eqnarray}
\label{B1}
&& \lim_{N \to \infty} K_N \, M_N \, 2^{-L_N} \; =\; 0\;, \quad
\lim_{N \to \infty} K_N \, M_N^2 \, \kappa_N \; =\; 0\;,\\
&& \quad 
\label{B2}
\lim_{N \to \infty} \frac{K_N \, \nu(V_N \setminus A_N)}
{\beta_N \, \nu(A_N)} \, E_\rho\Big[ \frac{W^N_x}{v_\ell(x)}\Big]
\;=\;0\;, \\
&&\qquad 
\label{B3}
\lim_{N \to \infty} \frac{K_N}{\beta_N} 
\, E_\rho\Big[\frac{W^N_x}{v_\ell(x)}{\mb 1}\{x \notin B_N\} \Big]
\;=\;0\;, \\
&&\qquad\quad
\label{B4}
\lim_{N \to \infty} \frac{K_N^2 \nu(V_N \setminus A_N)}
{\beta_N \nu(A_N)} \, E_\rho\Big[ \frac{W^N_x}{v_\ell(x)}\Big] 
\, \rho(B_N) \;=\;0\;, \\
\label{B5}
&&\qquad\qquad
\lim_{N \to \infty} \frac{K_N}{\beta_N} \, 
E_\rho\Big[\frac{W^N_x}{v_\ell(x)}\Big] \;=\;\infty\;.  
\end{eqnarray}
\end{lemma}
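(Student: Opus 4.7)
My plan is to reduce the existence of $K_N$ to an elementary diagonal argument: given finitely many positive sequences $u_N^{(1)}, \dots, u_N^{(m)}$ tending to zero and a positive sequence $R_N$, setting $\epsilon_N = \max_i u_N^{(i)}$ and $K_N = R_N \epsilon_N^{-1/4}$ yields $K_N/R_N \to \infty$ while each $K_N u_N^{(i)}$ (or $K_N^2 u_N^{(i)}/R_N$ in the quadratic case) is controlled by a positive power of $\epsilon_N$. The work is to produce the $u_N^{(i)}$ on which conditions \eqref{B1}--\eqref{B4} hang, and to verify that these are compatible with \eqref{B5}.

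The key observation is that the quantity $R_N := \beta_N/E_\rho[W^N_x/v_\ell(x)]$ appearing in \eqref{B5} admits a second expression in terms of $\nu_A$-expectations. Using the formula \eqref{rho3} for $\rho$ directly, one finds $E_\rho[W^N_x/v_\ell(x)] = \nu(A_N)\big/\sum_{y\in A_N}\nu(y) v_\ell(y)/W^N_y$, so that $R_N = E_{\nu_A}[\beta_N v_\ell/W]$. Hypothesis \eqref{A2} applied to the subset $B_N=A_N$, admissible since $|A_N|=M_N\uparrow\infty$, then yields the a priori bound $R_N=O(M_N)$. A parallel calculation gives $E_\rho[(W^N_x/v_\ell(x))\mb 1\{x\notin B_N\}]\big/E_\rho[W^N_x/v_\ell(x)] = \nu(A_N\setminus B_N)/\nu(A_N)$, which is dominated by $\nu(B_N^c)/\nu(A_N)$ and therefore tends to zero by \eqref{e:c2} together with $\nu(A_N)\to 1$ from \eqref{A0}.

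With $R_N=O(M_N)$ in hand, \eqref{A3} yields $R_N M_N 2^{-L_N}=O(M_N^2 2^{-L_N})\to 0$ and $R_N M_N^2 \kappa_N=O(M_N^3\kappa_N)\to 0$, which controls both terms in \eqref{B1}. Condition \eqref{B2} becomes $(K_N/R_N)\nu(A_N^c)/\nu(A_N)$ with $\nu(A_N^c)\to 0$ from \eqref{A0}; condition \eqref{B3} becomes $(K_N/R_N)\phi_N$ with $\phi_N:=\nu(A_N\setminus B_N)/\nu(A_N)$ just shown to tend to zero; and condition \eqref{B4} becomes $(K_N/R_N)^2\cdot R_N\nu(A_N^c)\rho(B_N)/\nu(A_N)$ with $R_N\nu(A_N^c)\rho(B_N)\to 0$ supplied by \eqref{e:c3}. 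Setting $\epsilon_N:=\max\{R_N M_N 2^{-L_N},\,R_N M_N^2\kappa_N,\,\nu(A_N^c),\,\phi_N,\,R_N\nu(A_N^c)\rho(B_N)\}$ and $K_N:=R_N\epsilon_N^{-1/4}$, one verifies that each of \eqref{B1}--\eqref{B3} is bounded by $\epsilon_N^{3/4}/\nu(A_N)$ and \eqref{B4} by $\epsilon_N^{1/2}/\nu(A_N)$, so all tend to zero, while \eqref{B5} reads $K_N/R_N=\epsilon_N^{-1/4}\to\infty$.

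The main conceptual step, and the only real obstacle I anticipate, is packaging \eqref{A2} to obtain $R_N=O(M_N)$. Without this, \eqref{e:c1} alone delivers only $R_N M_N\kappa_N\to 0$, whereas the combination of the second part of \eqref{B1} with \eqref{B5} demands $R_N M_N^2\kappa_N\to 0$, one factor of $M_N$ more than \eqref{e:c1} provides directly. The identification $R_N=E_{\nu_A}[\beta_N v_\ell/W]$ via \eqref{rho3}, together with \eqref{A2} applied at $B_N=A_N$, is exactly what furnishes this extra factor.
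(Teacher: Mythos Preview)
Your proof is correct and is essentially the paper's argument, just with the slowly diverging factor made explicit: the paper writes $K_N=\psi_N\beta_N/E_\rho[W^N_x/v_\ell(x)]=\psi_N R_N$ for an abstractly chosen $\psi_N\uparrow\infty$, whereas you take $\psi_N=\epsilon_N^{-1/4}$. Both proofs hinge on the same identification $R_N=E_{\nu_A}[\beta_N v_\ell/W^N]$ together with the second half of \eqref{A2} at $B_N=A_N$ to obtain $R_N=O(M_N)$; the paper's sentence ``Conditions \eqref{B1} follow [from] the definition of $\psi_N$ and from \eqref{A2}'' is exactly this step. Your observation that \eqref{e:c1} is never actually invoked is also borne out by the paper's verification.

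One small correction: you appeal to \eqref{A0} for $\nu(A_N)\to 1$, but \eqref{A0} is not among the hypotheses of this lemma. The conclusion still holds, since $B_N\subset A_N$ gives $\nu(A_N^c)\le\nu(B_N^c)\to 0$ directly from \eqref{e:c2}. With that citation fixed, your argument is complete.
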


\begin{proof}
In view of ({\bf A3}), there exists a sequence $\psi_N\uparrow\infty$
such that $\psi_N \, M_N^2 \, 2^{-L_N}$, $\psi_N \, M_N^3 \, \kappa_N$
vanish as $N\uparrow\infty$. We may choose this sequence $\psi_N$ so
that the limits in {\rm \eqref{e:c1}} and {\rm \eqref{e:c2}} still
hold when multiplied by $\psi_N$, as well as the one in {\rm
  \eqref{e:c3}} when multiplied by $\psi_N^2$. Given this sequence
$\psi_N$, let
\begin{equation*}
K_N \;=\; \frac{\psi_N
  \beta_N}{E_{\rho}\Big[\frac{W^N_x}{v_\ell(x)}\Big]}
\;=\; \psi_N \, \beta_N \, E_{\nu_{A}}
\Big[\frac{v_\ell(x)}{W^N_x}\Big]\;.
\end{equation*}

Conditions \eqref{B1} follow the definition of $\psi_N$ and from
({\bf A2}), while condition \eqref{B2} follows from \eqref{e:c2} since
$B_N\subseteq A_N$. To verify \eqref{B3}, it is enough to remember that
$\rho(x) W^N_x v_\ell(x)^{-1} = \nu(x)$ and to recall
\eqref{e:c2}. Condition \eqref{B4} follows from assumptions
\eqref{e:c3}, \eqref{e:c2} and the definition of $K_N$. Finally,
condition \eqref{B5} requires $\psi_N$ to diverge.
\end{proof}

\begin{proposition}
\label{dXY}
Suppose that conditions {\rm ({\bf A1}), ({\bf A2}),
  \eqref{B1}--\eqref{B5}} are in force. Then, for every $N\ge 1$,
there exists a coupling $Q_N$ between the stationary, con\-tin\-u\-ous-time
Markov chain $\{Y^N_{\beta_Nt} : t\ge 0\}$ on $A_N$ with mean $W^N_x/
\beta_N v_\ell(x)$ exponential times and uniform jump probabilities
$p_N(x,y)=\rho_N(y)$, $x$, $y\in A_N$, and the Markov chain
$\{X_{\beta_Nt}^N : t\ge 0\}$ such that
$Q_N[X_{0}^N=Y_{0}^N]=\rho(y)$, $y\in A_N$, and
\begin{equation*}
\lim_{N\to\infty} Q_N \big[ d_T(X^N_{\beta_N\cdot}, Y^N_{\beta_N\cdot}) >
\delta \big] \;=\; 0
\end{equation*}
for every $T \geq 0$ and $\delta>0$, where $d_T$ stands for the
distance introduced in \eqref{c02}.
\end{proposition}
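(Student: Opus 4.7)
The plan is to construct a coupling $Q_N$ under which $X^N$ and $Y^N$ visit the same sequence of distinct points of $A_N$ with matching sojourn times, so that the only set on which the two processes disagree on $[0, T \beta_N]$ consists of the excursions of $X^N$ into $V_N \setminus A_N$, whose Lebesgue measure, after rescaling by $\beta_N^{-1}$, vanishes as $N \to \infty$. The subset $B_N \subseteq A_N$ appearing in \eqref{B3}--\eqref{B4} serves as the set of typical deep traps that will be actively tracked, with contributions from visits to $A_N \setminus B_N$ controlled by those two bounds.

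The coupling is built from the trace $\hat X^N$ of $X^N$ on $A_N$: the chain obtained by recording $X^N$ at the successive times at which it first hits a point of $A_N$ distinct from its previous $A_N$-state. By the separation hypothesis \eqref{A1} and the strong Markov property, $\hat X^N$ is a Markov chain on $A_N$; its jump distribution from $x_i$ is $\mb P_{x_i}[\X(\HH_{A \setminus \{x_i\}}) = x_j]$ for $x_j \neq x_i$, and its total holding time at $x_i$, being a geometric sum of Exp$(W^N_{x_i}^{-1})$ variables, is exponential with rate $\mb P_{x_i}[\HH_{A \setminus \{x_i\}} < \HH^+_{x_i}] / W^N_{x_i}$. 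Lemmas \ref{s01} (second assertion) and \ref{s03} together place both quantities within an error $\epsilon_N := 2(2^{-L_N} + \kappa_N)$ of the corresponding quantities for the jump--hold representation of $Y^N$ after self-loops are removed, namely $\rho_N(x_j)/(1-\rho_N(x_i))$ and $v_\ell(x_i)/W^N_{x_i}$. A step-by-step maximal coupling of the jumps together with a monotone coupling of the exponential clocks then yields a joint realization of $\hat X^N$ and $Y^N$ that agrees at each meta-jump with probability at least $1 - C\epsilon_N$.

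Over the window $[0, T \beta_N]$ the number of meta-jumps of $Y^N$ is of order $T \beta_N / E_\rho[W^N_x/v_\ell(x)]$, and using the relation $K_N \approx \beta_N / E_\rho[W^N_x/v_\ell(x)]$ implicit in \eqref{B5} together with \eqref{B1}, the product of this count with the per-step failure probability $O(\epsilon_N)$ vanishes, so that the full coupling succeeds except on an event of vanishing probability. On this event, $Y^N_{\beta_N t} = X^N_{\beta_N t}$ whenever $X^N_{\beta_N t} \in A_N$, hence the disagreement set lies inside $\{t \in [0,T] : X^N_{\beta_N t} \in V_N \setminus A_N\}$. By Proposition \ref{t01} applied to the excursion decomposition of $X^N$ at $A_N$, the expected Lebesgue measure of this set is bounded, after rescaling by $\beta_N^{-1}$, by $T \nu(V_N \setminus A_N)(1+o(1))$; by \eqref{B2} this vanishes. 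The bound on $d_T$ is then read off from its defining formula \eqref{c02}.

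The main obstacle is the simultaneous control of three heterogeneous error sources --- mismatched jump directions, mismatched holding-time rates, and shallow-trap residence time --- each on its own natural scale; this is precisely what the bounds \eqref{B1}--\eqref{B5} from Lemma \ref{sd01} are designed to deliver. A further technical point to address is the mismatch between $\hat X^N$, which makes no self-loops, and $Y^N$, which self-loops at $x_i$ with probability $\rho_N(x_i) = O(M_N^{-1})$: the rates actually matched carry a factor $1 - \rho_N(x_i)$, a correction of order $M_N^{-1}$ per step which is absorbed into the error budget allowed by \eqref{B1}.
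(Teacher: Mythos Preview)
Your overall plan follows the paper's, but there is a genuine gap in the central step. The claim ``On this event, $Y^N_{\beta_N t} = X^N_{\beta_N t}$ whenever $X^N_{\beta_N t} \in A_N$'' is false. Matching the \emph{sojourn times at points of $A_N$} does not match the \emph{real-time trajectories}: after $X^N$ leaves $x_i$, it spends a positive time in $V_N\setminus A_N$ before reaching the next deep trap, so in real time $X^N$ lags behind $Y^N$ by the accumulated shallow-trap time. Concretely, if $X^N$ sits at $x_1$ on $[0,T_1)$, then wanders in $V_N\setminus A_N$ on $[T_1,T_1+S_1)$, then reaches $x_2$, while $Y^N$ sits at $x_1$ on $[0,T_1)$ and at $x_2$ on $[T_1,T_1+T_2)$, then at time $T_1+S_1$ we have $X^N\in A_N$ but $Y^N$ may already have left $x_2$. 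What your coupling actually gives is $X^N_t = Y^N_{A(t)}$ on $\{X^N_t\in A_N\}$, where $A(t)=\int_0^t\mb 1\{X^N_s\in A_N\}\,ds\le t$; the disagreement set is $\{t: Y^N\text{ jumps in }(A(t),t]\}$, whose measure is governed by the \emph{number of $Y^N$-jumps times the delay}, not by $\nu(A_N^c)$ alone. This is exactly where \eqref{B3} and \eqref{B4} enter, and your argument never invokes them beyond the opening paragraph.

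The paper handles this by (i) coupling not the self-loop-free trace but the chain $X^N_{D_k}$ at escape-and-return times $D_k$, which does admit self-loops and whose one-step law is within $a_N=(M_N{+}1)(2^{-L_N}+M_N\kappa_N)$ of $\rho_N$ in total variation, so that holding times and jump law match $Y^N$ \emph{exactly} in structure; (ii) inserting the intermediate process $\bar X^N_t = X^N(\sup\{s\le t: X^N_s\in A_N\})$; (iii) bounding $d_T(\bar X^N,Y^N)$ on $[0,\sigma_{K_N}]$ by splitting the $Y^N$-intervals into those in $A_N\setminus B_N$ (total length controlled by \eqref{B3}) and those in $B_N$, each of which contributes at most the total delay $\Delta_N=D_{K_N}-\sigma_{K_N}$, whence the bound $\mf N_N\,\Delta_N$ controlled via Cauchy--Schwarz and \eqref{B4}. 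Your sketch needs this delay decomposition (or an equivalent). A secondary issue: your assertion $\rho_N(x_i)=O(M_N^{-1})$ is not implied by ({\bf A2}); the paper's use of $D_k$ with self-loops sidesteps this entirely.
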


\begin{proof}
Recall the definition of the sequence of stopping times $\{D_k : k\ge
0\}$ introduced in Section~\ref{sec:general} with $U =
H_{R(A_N,\ell_N)}$. Since by ({\bf A1}) $R(A_N,\ell_N) \not =
\varnothing$ and since the state space is finite and irreducible, $\mb
E_x[U] <\infty$ for all $x\in A$. It also follows from
assumption ({\bf A1}) that $\mb P_x[H_{A\setminus \{x\}} \ge U]
=1$ for all $x \in A$. Therefore, by Lemma~\ref{t02} and
Proposition~\ref{t01}, the discrete-time Markov chain $X^N_{D_k}$
is irreducible and its unique stationary state is the measure $\rho$
defined in \eqref{rho2}.

We start the construction of the measure $Q_N$ by coupling the
discrete skeleton of the chain $Y^N_t$ with the chain $X^N_{D_k}$, and
by coupling the waiting times of the chain $Y^N_t$ with the times
spent by $X^N_t$ at each site of $A_N$. It follows from
Lemma~\ref{s03}, which presents an estimate of the distance between
the measure $\rho$ and the measure $p(\,\cdot\,,A)$, from
Lemma~\ref{s01} and from the strong Markov property at time
$H_{R(A,\ell)}$ that
\begin{equation}
\label{c04}
\sup_{y \in A} \big\Vert \mb P_y[X^N_{D_1} = \cdot] - \rho(\cdot) 
\big\Vert_{TV} \;\leq\; (M_N+1)\, (2^{-L_N}  \,+\, M_N \, \kappa_N) 
\;=:\; a_N\;.
\end{equation}

Let $\sigma_0=0$ and denote by $\{\sigma_i : i \geq 1\}$ the jump
times of the chain $Y^N_t$, including among these jumps the ones to
the same site. We couple the initial state $X^N_{0}$ and $Y^N_{0}$ so
that $Q_N[X^N_{0} = Y^N_{0}]=1$, $Q_N[X^N_{0} = x]=\rho(x)$, $x\in A$.
As $Y^N_{\sigma_1}$ is distributed according to $\rho$, by \eqref{c04}
we can couple $X^N_{D_1}$ and $Y^N_{\sigma_1}$ in a way that they
coincide with probability at least $1-a_N$.  Moreover,
conditioned on $X^N_{D_i} = x$, the number of visits of $X^N_t$ to the
point $x$ between times $D_i$ and $D_{i+1}$ is a geometric random
variable with success probability $v_\ell(x)$, so that
\begin{equation*}
\int_{[D_i, D_{i+1})} \mb 1 \{X^N_t = x\} \, dt
\end{equation*}
is an exponential random variable with expectation $W_{x}/
v_\ell(x)$. This is also the distribution of the time that $Y^N_t$
spends in $x$. Proceeding by induction and using the strong Markov
property at times $D_i$ (for $X^N_t$) and $\sigma_i$ (for $Y^N_t$), we
obtain a coupling $Q_N$ between $X^N_t$ and $Y^N_t$ such that
\begin{equation*}
Q_N \Big[
\begin{array}{c}
X^N_{D_i} = Y^N_{\sigma_i} \,,\, \int_{D_i}^{D_{i+1}} 
\mb 1 \{X^N_t = X^N_{D_i}\} \, dt  = \sigma_{i+1} - \sigma_i \\
\text{ for every $0\le i \le K_N$}
\end{array} \Big] \;\geq\; 1 - K_N\, a_N \; ,
\end{equation*} 
where $K_N$ is the sequence introduced in Lemma \ref{sd01}.  Denote
the event appearing in the previous formula by $\mathcal G$.  By
\eqref{B1},
\begin{equation}
\label{c10}
\lim_{N\to\infty} Q_N[\,\mc G^c \,]\;=\;0\;.
\end{equation}

We claim that the coupling $Q_N$ defined above satisfies the statement
of the theorem.  To estimate the distance between the processes
$X^N_t$ and $Y^N_t$, we introduce a third process $\bar X^N_t$ close
to $X^N_t$ in the distance $d_T$.  Following \cite{bl2}, consider the
process $\bar X^N_t$ defined by
\begin{equation}
\label{e:belowD}
\bar X^N_t = X^N( \sup \{s \leq t : X^N_s \in A_N\})\;.
\end{equation}
The (non-Markovian) process $\bar X^N_t$ indicates the last site in
$A_N$ visited by $X^N_s$ before time $t$. We adopt for $\bar X^N_t$
the same convention agreed for the process $Y^N_t$ and consider that
the process $\bar X^N_t$ jumped from $y\in A_N$ to $y$ at time $t'$ if
the process $X^N_t$ being at $y$ at time $s<t'$, reached $R(A_N,\ell)$
and then returned to $y$ at time $t'$ before hitting another site
$z\in A_N\setminus \{y\}$. With this convention, the jump times of the
process $\bar X^N_t$ are exactly the stopping times $\{D_i: i \geq
1\}$.

We assert that for every $T>0$ and $\delta>0$,
\begin{equation}
\label{c05}
\lim_{N\to\infty}  \mb{P}_\rho 
\big[ d_{T} (\bar X^N_{\beta_N\cdot}, X^N_{\beta_N\cdot}) > \delta \big] \;=\;0\;.
\end{equation}
Fix $T>0$ and $\delta>0$. By definition of the process $\bar{X}^N$,
\[
d_{T} (\bar X^N_{\beta_N\cdot}, X^N_{\beta_N\cdot}) \;\le\; 
\frac{1}{\beta_N} \int_0^{\beta_N T} {\mb 1}\{X^N_t \notin A_N\} \,dt\;.
\]
Therefore, 
\begin{equation*}
\mb{P}_\rho \big[d_{T} (\bar X^N_{\beta_N\cdot}, X^N_{\beta_N\cdot}) 
> \delta \big] \,\leq\,  \frac{1}{\beta_N \delta} \, 
\mb E_\rho \Big[ \int_0^{D_{K_N}} 
\mb 1\{ X^N_{t} \not\in A_N\} \, dt \Big]
\,+\, \mb P_\rho [  D_{K_N} \leq \beta_N T]\, .
\end{equation*}
Let us define
\[
\Delta_N := \int_0^{D_{K_N}} {\mb 1} \{X^N_t \notin A_N\} dt
\;=\; D_{K_N} - \sigma_{K_N} \;.
\]
This quantity will appear a couple of times in the computations below.
By \eqref{e:B5}, $\mb P_\rho [ D_{K_N} \leq \beta_N T]$ vanishes as
$N\uparrow\infty$ because $\sigma_{K_N} \le D_{K_N}$. On the other
hand, by definition of the process $\bar X^N_t$ and by stationarity,
\begin{equation*}
\frac{1}{\beta_N \delta} \mb E_\rho [\Delta_N] \; =\; 
\frac{K_N}{\beta_N \delta} \, \mb E_\rho \Big[ \int_0^{D_1} \mb 1\{
X^N_{t} \not \in A_N\} \, dt \Big]\;. 
\end{equation*}
By Corollary~\ref{corED}, the previous expression equals
\begin{equation}
\label{e:boundingmu}
\frac{K_N}{\beta_N \delta} \, \nu_N (V_N \setminus A_N) \,
\mb E_\rho [D_1] \;=\;   \frac{K_N \nu_N (V_N \setminus A_N)}
{\beta_N \nu_N (A_N) \delta} \,
E_\rho \Big[\frac{W^N_x}{v_\ell(x)}\Big]\;\cdot
\end{equation}
By \eqref{B2}, this expression vanishes as
$N\uparrow\infty$. This proves \eqref{c05}.

Now we turn into the estimation of the distance between $\bar X^N_t$
and $Y^N_t$.  On the event $\mc G$, the first $K_N$ jumps of the
processes $\bar X^N_t$ and $Y^N_t$ are the same, and the process $Y^N_t$ is
always ``ahead of'' $\bar X^N_t$ in the sense that $\bar X^N_t$ spends
more time at each site than $Y^N_t$. We need to show that the delay
between $\bar X^N_t$ and $Y^N_t$ is small. Let $B_N =
\{x_1^N,\dots,x_{M'_N}^N\} \subseteq A_N$ be the set appearing in
conditions \eqref{B3} and \eqref{B4}, and let $\mf N_N$ be the number
of times the process $Y^N$ visits $B_N$ before $\sigma_{K_N}$:
\[
\mf N_N \;:=\; \# \{j < K_N : Y_{\sigma_j}^N \in B_N \}\;.
\]
Denote by $\mc G_1$ the event $\mc G \cap \{ \sigma_{K_N} \geq \beta_N
T\}$. Since we have that $d_{T} (\bar X^N_{\beta_N\cdot},
Y^N_{\beta_N\cdot}) = \beta_N^{-1} d_{\beta_N T}(\bar
X^N_\cdot,Y^N_\cdot)$, on the set $\mc G_1$, $d_{T} (\bar
X^N_{\beta_N\cdot}, Y^N_{\beta_N\cdot}) \leq \beta_N^{-1}
d_{\sigma_{K_N}}(\bar X^N_\cdot,Y^N_\cdot)$. Therefore, on the set
$\mc G_1$,
\begin{equation*}
\begin{split}
& d_{T} (\bar X^N_{\beta_N\cdot}, Y^N_{\beta_N\cdot}) \;\le\;
\frac 1{\beta_N} \sum_{j=1}^{K_N} \int_{\sigma_{j-1}}^{\sigma_j} 
{\mb 1}\{Y^N_t \not = \bar X^N_t\} \, dt \\
&\quad \le\; \frac 1{\beta_N} \int_0^{\sigma_{K_N}}
{\mb 1}\{Y^N_{t} \notin B_N\} \, dt \;+\;
\frac 1{\beta_N} \sum_{j=1}^{K_N} \int_{\sigma_{j-1}}^{\sigma_j} 
{\mb 1}\{Y^N_{t} \in B_N, Y^N_t \not = \bar X^N_t\} \, dt\;.
\end{split}
\end{equation*}
We claim that each integral in the second term of the previous sum is
bounded by $\Delta_N$. Indeed, the total delay of the process $\bar
X^N_t$ with respect to the process $Y^N_t$ in the interval
$[0,\sigma_{K_N}]$ is $D_{K_N}-\sigma_{K_N} = \Delta_N$.  On the other
hand, either the length of time interval $[\sigma_{j-1}, \sigma_j]$ is
bounded by $\Delta_N$, in which case the claim is trivial, or the
length is greater than $\Delta_N$. In this latter situation, since the
total delay between $Y$ and $\bar X$ in the interval
$[0,\sigma_{K_N}]$ is $\Delta_N$, $D_{j-1} - \sigma_{j-1} \le
\Delta_N$ for $1\le j\le K_N$. Hence, in the interval
$[\sigma_{j-1}+\Delta_N, \sigma_{j})$ we have that $\bar
X_t=Y_t$. This proves our assertion. In conclusion, if one recalls the
definition of $\mf N_N$, on the set $\mc G_1$,
\begin{equation*}
d_{T} (\bar X^N_{\beta_N\cdot}, Y^N_{\beta_N\cdot}) \;\le\;
\frac 1{\beta_N} \int_0^{\sigma_{K_N}}
{\mb 1}\{Y^N_{t} \notin B_N\} \, dt \;+\; \frac 1{\beta_N}
\Delta_N \mf N_{N}\;.
\end{equation*}
In conclusion,
\begin{equation*}
\begin{split}
& Q_N \big[ d_{T} (\bar X^N_{\beta_N\cdot}, Y^N_{\beta_N\cdot}) > \delta
\big] \\
&\quad \leq\; Q_N[\mc G_1^c] \;+\;  \frac{2}{\beta_N \delta} 
Q_N \Big[\int_0^{\sigma_{K_N}} {\mb 1}\{Y^N_t \notin B_N\} dt\Big] 
\;+\; Q_N \big[ \Delta_N \mf N_{N} > (1/2) \delta \beta_N \big] \;.
\end{split}
\end{equation*}
The first term vanishes as $N\uparrow\infty$ by \eqref{c10} and
\eqref{e:B5}. By Tchebyshev and Cauchy-Schwarz inequalities, $P[ZW
>\delta] = P[\sqrt{ZW} >\sqrt{\delta}] \leq (\delta^{-1}
E[Z]E[W])^{1/2}$ for any pair of nonnegative random variables $Z$,
$W$. Therefore, the sum of the second and third terms is bounded by
\begin{equation*}
\frac{2K_N}{\beta_N \delta} E_\rho\Big[\frac{W^N_x}{v_\ell(x)}
{\mb 1}\{x \notin B_N\}\Big] \;+\;
\sqrt{\frac{2 \, Q_N [\Delta_N] \, Q_N [\mf N_N]} 
{\delta\, \beta_N}} 
\end{equation*}
Since $Q_N [\mf N_N] = K_N \rho(B_N)$, by \eqref{e:boundingmu} this
expression is less than or equal to
\begin{equation*}
\frac{2K_N}{\beta_N \delta} E_\rho\Big[\frac{W^N_x}{v_\ell(x)}
{\mb 1}\{x \notin B_N\}\Big] \;+\;
\sqrt{\frac{2K_N^2}{\beta_N \delta} \, \frac{\nu(V^N\setminus A_N)}{\nu(A_N)} 
\, E_\rho\Big[\frac{W^N_x}{v_\ell(x)}\Big] \, \rho(B_N)}\;.
\end{equation*}
By assumptions \eqref{B3} and \eqref{B4}, this expression vanishes as
$N \uparrow \infty$.

To conclude the proof of the theorem it remains to show that 
\begin{equation}
\label{e:B5}
\lim_{N \to \infty} Q_N \big[\sigma_{K_N} \leq \beta_N T\big] \;=\;0\;.
\end{equation}
For any random variable $Z$ and any $T \geq 0$ such that $E[Z]
\geq 2T$, by Tchebycheff inequality we have that
\[
P[Z < T ] \;\leq\; \frac{4 \Var(Z)}{E[\,Z\,]^2}\;\cdot
\]
Note that
\[
Q_N[\sigma_{K_N}] \;=\; K_N \, E_\rho\Big[
\frac{W^N_x}{v_\ell(x)}\Big]\;, 
\quad \Var_{Q_N}(\sigma_{K_N}) \;\le\; 2\, K_N  \,
E_\rho\Big[ \Big(\frac{W^N_x}{v_\ell(x)}\Big)^2\Big]\;,
\]
and that, by assumption \eqref{B5}, $K_N E_\rho[W^N_x/v_\ell(x)] \geq
2\beta_N T$ for $N$ sufficiently large. By the previous
elementary inequality,
\[
Q_N \big[ \sigma_{K_N} \leq \beta_N T \big] \;\leq\; 
\frac{8\,  E_\rho\Big[ \Big(\frac{W^N_x}{v_\ell(x)}\Big)^2\Big]}
{K_N E_\rho\Big[ \frac{W^N_x}{v_\ell(x)}\Big]^2} 
\;\leq\; \frac{8\,\beta_N}
{K_N E_\rho\Big[ \frac{W^N_x}{v_\ell(x)}\Big]} \,
\frac{E_\rho\Big[ \Big(\frac{W^N_x}{v_\ell(x)}\Big)^2\Big]}
{\beta_N E_\rho\Big[ \frac{W^N_x}{v_\ell(x)}\Big]} \;\cdot
\]
By assumption \eqref{B5}, the first term of this expression vanishes
as $N\uparrow\infty$. The second one is equal to
\begin{equation*}
E_{\nu_{A}} \Big[\frac{W^N_x}{\beta_N v_\ell(x)}\Big]\;.
\end{equation*}
By ({\bf A2}) this expression is bounded uniformly in $N$.  This
concludes the proof of \eqref{e:B5} and the one of Proposition
\ref{dXY}.
\end{proof}

In the previous proposition, instead of starting from the stationary
measure $\rho_N$, we may also start from any state $z^N$, provided its
asymptotic $\rho_N$-measure does not vanish with $N$, as in the
hypothesis of Corollary \ref{s05}.

The following remark will be important when proving Theorem \ref{t:random}
\begin{remark}
\label{s12}
Assumption {\rm ({\bf A0})} has only been used in Lemma \ref{s00} to
prove the existence of a sequence of subsets $B_N$ satisfying
\eqref{e:c2}, \eqref{e:c3}. In particular, Theorem \ref{dXYf} remains
in force if hypothesis {\rm ({\bf A0})} is replaced by the existence
of a sequence $I_N \le M_N$, $I_N\uparrow\infty$, for which $B_N 
=\{x^N_1, \dots x^N_{I_N}\}$ satisfies \eqref{e:c2} and
such that
\begin{equation}
\label{14}
\lim_{N\to\infty} |B_N|\, \nu_N(A^c_N) \;=\;0\;.
\end{equation}
\end{remark}

\section{$K$-processes}
\label{sec5}

We introduce in this section $K$-processes, a class of strong Markov
processes on $\overline{\mathbb{N}} = \bb N \cup\{\infty\}$ with one
fictitious state. We refer to \cite{fm1} for historical remarks and to
\cite{p1} for a detailed presentation and the proofs omitted here. The
main result of this section presents sufficient conditions for the
convergence of a sequence of finite-state Markov processes to a
$K$-process.

Throughout this section we fix two sequences of positive real numbers
$\{u_k : k \in \mathbb{N}\}$ and $\{Z_k : k \in \mathbb{N}\}$.  The
first sequence represents the `entrance measure' and the second one
the `hopping times' of the $K$-process.  The only assumption we make
over these sequences is that
\begin{equation}
\label{e:uZ}
\sum_{k \in \mathbb{N}} Z_k u_k < \infty.
\end{equation}
However, the process will be more interesting in the case
\begin{equation}
\label{e:sumu}
\sum_{k \in \mathbb{N}} u_k = \infty\;.
\end{equation}
If this sum is finite, the $K$-process associated to the sequences
$u_k$ and $Z_k$ corresponds to a Markov process on $\bb N$ with no
fictitious state.

Consider the set $\overline{\mathbb{N}}$ of non-negative integers with
an extra point denoted by $\infty$.  We endow this set with the metric
induced by the isometry $\phi:\overline{\mathbb{N}} \to \mathbb{R}$
which sends $n \in \overline{\mathbb{N}}$ to $1/n$ and $\infty$ to
$0$.  This makes the set $\overline{\mathbb{N}}$ into a compact metric
space. We use the notation $\textnormal{dist}(x,y) =
|\phi(y)-\phi(x)|$ for this metric.

For each $k \in \mathbb{N}$, define independent Poisson process
$\{N^k_t : t \geq 0\}$ with jump rate given by $u_k$.  Denote by
$\sigma^k_i$, $i \geq 1$, the time of the $i$-th jump performed by the
process $N^k_t$.  Independently from the Poisson processes, let
$\{T_0, T^k_i; k \in \mathbb{N}, i \geq 1\}$ be a collection of mean
one independent exponential random variables.

Let $Z_\infty=0$ and for $y \in \overline{\mathbb{N}}$ consider the
process
\begin{equation*}
\Gamma^y(t) = Z_y T_0 + \sum_{k \in \mathbb{N}} Z_k \sum_{i = 1}^{N^k_t} T^k_i.
\end{equation*}
Define the $K$-process with parameter $(Z_k, u_k)$, starting from $y$
as follows
\begin{equation}
\label{e:Xyt}
X^y(t) =
\begin{cases}
y & \text{ if $0 \leq t < Z_y T_0$,}\\
k & \text{ if $\Gamma^y(\sigma^k_i -) \leq t 
< \Gamma^y(\sigma^k_i)$ for some $i \geq 1$ and}\\
\infty & \text{ otherwise}.
\end{cases}
\end{equation}
Note that $X^y(0)=y$ almost surely if $y \in \mathbb N$, and even in
the case $y = \infty$ if \eqref{e:sumu} holds. We summarize in the
next result the main properties of the process $X^y_t$. Its proof can
be found in \cite{p1} or adapted from \cite{fm1} where the case in
which $u_k=1$ for all $k\ge 1$ is examined. Recall that we denote by
$H_A$ the hitting time of a set $A$ and that $Z_\infty =0$.

\begin{theorem}
\label{s02}
For any $y\in \overline{\mathbb{N}}$, the process $\{X^y(t) : t\ge
0\}$ is a strong Markov process on $\overline{\mathbb{N}}$ with
right-continuous paths with left limits. Being at $k\in\bb N$, the
process waits a mean $Z_k$ exponential time at the end of which it
jumps to $\infty$. For any finite subset $A$ of $\bb N$, $H_A$ is
a.s. finite and
\begin{equation*}
\mb P \big[X^y (H_A) = j \big] \;=\; \frac {u_j}{\sum_{i\in A}
  u_i}\;, \quad j\in A\;.
\end{equation*}
\end{theorem}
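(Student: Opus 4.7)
The plan is to extract each of the four assertions directly from the explicit Poisson construction in \eqref{e:Xyt}, using only standard properties of the underlying Poisson processes and exponential variables. First I would check that the clock $\Gamma^y$ is a.s.\ finite on every bounded interval: by Wald's identity,
\begin{equation*}
\mb E[\Gamma^y(t)] \;=\; Z_y \,+\, t\sum_{k\in\bb N} Z_k u_k \;<\; \infty
\end{equation*}
by assumption \eqref{e:uZ}, so $\Gamma^y(t)<\infty$ a.s.\ for each $t$, and then on every compact by monotonicity. Because $\Gamma^y$ is piecewise constant with a.s.\ pairwise distinct jump times $\{\sigma^k_i\}$ and jump sizes $Z_k T^k_i$, the intervals $I^k_i := [\Gamma^y(\sigma^k_i-),\Gamma^y(\sigma^k_i))$ are disjoint and of positive length, and $X^y\equiv k$ on $I^k_i$ by \eqref{e:Xyt}. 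Right-continuity and the existence of left limits follow immediately from the half-open form of $I^k_i$. The duration of the $i$-th visit to $k$ is the length $Z_k T^k_i$ of $I^k_i$, a mean-$Z_k$ exponential random variable independent of what happens before the visit; at its right endpoint the process leaves $k$, giving the advertised holding-time description.

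For the hitting distribution, the first visit of $X^y$ to a finite set $A\subset\bb N$ occurs at time $\Gamma^y(\sigma^{*}-)$, where $\sigma^{*} := \min\{\sigma^k_i : k \in A,\, i \ge 1\}$ is the first arrival of the superposed Poisson process $\sum_{k\in A} N^k_t$, whose rate equals $u_A := \sum_{k\in A} u_k$. Hence $\sigma^{*}$ is a.s.\ finite (indeed, exponentially distributed) and, combined with the a.s.\ finiteness of $\Gamma^y$ on compacts established above, $H_A = \Gamma^y(\sigma^{*}-) <\infty$ a.s. By the classical thinning/superposition theorem the label of the first superposed arrival equals $j$ with probability $u_j/u_A$, yielding $\mb P[X^y(H_A)=j] = u_j/u_A$.

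The main remaining work is the strong Markov property. Here the approach I favor is to regard $X^y$ as a measurable functional of the Poisson point process $\Pi = \{(\sigma^k_i, k, T^k_i)\}$ on $\bb R_+\times\bb N\times\bb R_+$ with intensity $ds\otimes\mu\otimes\nu$, where $\mu\{k\}=u_k$ and $\nu$ is the unit exponential law. For a stopping time $\tau$ of the natural filtration of $X^y$, inverting $\Gamma^y$ at $\tau$ produces an internal Poisson-clock time $\bar\sigma(\tau)$, which one then needs to show is itself a stopping time of the natural filtration of $\Pi$. The strong Markov property of $\Pi$ at $\bar\sigma(\tau)$ supplies an independent copy of $\Pi$ governing the post-$\tau$ dynamics, and combining this with the memoryless property of the currently active exponential $T^k_i$ in the case $X^y(\tau)=k\in\bb N$ identifies the post-$\tau$ trajectory, in distribution, with $X^{X^y(\tau)}$. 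The main obstacle is precisely this last step: verifying that $\bar\sigma(\tau)$ is a stopping time for the Poisson filtration and separately handling the case $X^y(\tau)=\infty$, where one must invoke independence of the waiting time until the next arrival from $\mc F_\tau$.
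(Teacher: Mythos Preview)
The paper does not actually supply a proof of this theorem: immediately before the statement it says ``Its proof can be found in \cite{p1} or adapted from \cite{fm1}'', and nothing further is given. So there is no ``paper's proof'' to compare against beyond those references; your proposal is already more than what the paper does in-house.

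On the substance of your sketch: the finiteness of $\Gamma^y$, the holding-time description, and the hitting-distribution computation via Poisson superposition are all correct and are exactly the standard arguments. One small point you should make explicit: your formula $H_A=\Gamma^y(\sigma^*-)$ and the conclusion $\mb P[X^y(H_A)=j]=u_j/u_A$ tacitly assume $y\notin A$ (or $y=\infty$); if $y\in A$ then $H_A=0$ and $X^y(H_A)=y$ deterministically. The theorem as stated is really about the entrance law from $\infty$, and the paper uses it only in that regime.

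Your claim that c\`adl\`ag paths follow ``immediately'' from the half-open form of the intervals $I^k_i$ is too fast. On each $I^k_i$ this is clear, but you also need right-continuity and left limits at times $t$ with $X^y(t)=\infty$, and in the interesting case $\sum_k u_k=\infty$ such times are accumulation points of infinitely many intervals $I^k_i$. The correct argument is that for any $m\in\bb N$ the Poisson process $\sum_{k\le m} N^k$ has finite rate, so the intervals $I^k_i$ with $k\le m$ are locally finite in $t$; hence on a punctured right-neighborhood of $t$ the process takes values in $\{m+1,m+2,\dots,\infty\}$, which has diameter $\le 1/(m+1)$ in the $\overline{\bb N}$-metric. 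This gives right-continuity (and the same reasoning gives left limits). It is short, but it is not nothing.

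For the strong Markov property you have correctly identified both the natural strategy and the genuine obstacle. Inverting $\Gamma^y$ at a stopping time $\tau$ of the $X^y$-filtration and showing the resulting $\bar\sigma(\tau)$ is a stopping time of the Poisson filtration is the crux, and it is precisely what the cited references \cite{fm1,p1} carry out (in \cite{fm1} for $u_k\equiv 1$, in \cite{p1} in the present generality). Your outline is the right one; completing it rigorously, including the case $X^y(\tau)=\infty$, is essentially reproducing those papers, so citing them as the paper does is the honest shortcut.
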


We investigate in this section the convergence of a sequence of Markov
processes in finite state spaces towards the process $X^y(t)$.  Let
$\{M_N:N\ge 1\}$ be a sequence of integers such that
$M_N\uparrow\infty$, and consider the sequences of positive real
numbers
\begin{equation}
\label{e:uM}
u^N_k \;, \;\; Z^N_k \;, \quad 1\le k\le M_N \;, \quad
N \geq 1\;.
\end{equation}
In analogy with \eqref{e:Xyt}, we define processes $X^y_N(t)$ with
`entrance measure' given by $u^N_k$ and `hopping times' given by
$Z^N_k$. For $N \geq 1$, let $T^N_0$, $T^{N,k}_i$, $N^{N,k}_t$ and
$\sigma^{N,k}_i$, $1\le k\le M_N$, $i\ge 1$, be defined as above and
write
\begin{equation*}
\Gamma^y_N(t) = Z^N_y T^N_0 + \sum_{k=1}^{M_N} 
Z^N_k \sum_{i = 1}^{N^{N,k}_t} T^{N,k}_i\;, \text{ for $1\le y\le M_N$}
\end{equation*}
and
\begin{equation}
\label{e:Xynt}
X^y_N(t) =
\begin{cases}
y & \text{ if $0 \leq t < Z^N_y T^N_0$,}\\
k & \text{ if $\Gamma^y_N(\sigma^{N,k}_i -) \leq t <
  \Gamma^y_N(\sigma^{N,k}_i)$ for some $i\ge 1$}.
\end{cases}
\end{equation}

One can easily see that the process $X^y_N$ is a continuous-time
c\`adl\`ag, Markov chain over $\{1, \dots, M_N\}$.  The order in which
the points $\{1,\dots, M_N\}$ are visited by $X^y_N$, after the
starting position, is given by the order of the times
$\sigma^{N,k}_i$.  From this fact we can conclude that the law of
$X^y_N$ is characterized by the following properties:
\begin{itemize}
\item The state space is $\{1, \dots, M_N\}$ and the process starts
  from $y$ almost surely,
\item The process $X^y_N$ remains at any site $k$ an exponential time
  with mean $Z^N_k$, after which it jumps to a site $j$ with
  probability $u^N_j/\sum_{1\le i\le M_N} u^N_i$.
\end{itemize}

\begin{remark}
\label{s14}
Note that the dynamics of the process $X^y_N$ does not change if one
replaces the vector $\{u^N_k : 1\le k\le M_N\}$ by the vector
$\{\gamma_N u^N_k : 1\le k\le M_N\}$ for some $\gamma_N>0$. In
particular, when applying the theorem below we may multiply the
sequence $u^N_k$ by a constant $\gamma_N$ to ensure the convergence of
$\gamma_N u^N_k$ to $u_k$.
\end{remark}

The main result of this section is stated below. Recall from
\cite{EK86}, (5.2) the definition of the Skorohod's $J_1$ topology.

\begin{theorem}
\label{l:XntoX}
Assume that for every $k \in \mathbb{N}$
\begin{equation}
\label{e:Zuconv}
\lim_{N \to \infty} (Z^N_k, u^N_k) = (Z_k,u_k)
\end{equation}
and that 
\begin{equation*}
\lim_{m\to\infty}\limsup_{N\to\infty} \sum_{k=m}^{M_N} Z^N_k \, u^N_k \;=\;0\;.
\end{equation*}
Then, for any given $y \in \mathbb{N}$, $X^y_N$ converges weakly, as
$N\uparrow\infty$, towards $X^y$ in the Skorohod's $J_1$ topology.
\end{theorem}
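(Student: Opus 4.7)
The plan is to prove this by constructing a single almost-sure coupling between all the processes $X^y_N$ and $X^y$, reducing the question to uniform convergence of the underlying subordinator $\Gamma$ on compact sets, and then transferring this to Skorohod $J_1$ convergence via the geometry of the metric on $\overline{\mathbb N}$.

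First, I would build the coupling. On one probability space take independent rate-one Poisson processes $\{\tilde N^k : k\ge 1\}$ with jump times $\{\tilde\sigma^k_i\}$, and an independent family $\{T_0, T^k_i\}$ of i.i.d.\ mean-one exponentials. Define the limiting clocks by $N^k_t := \tilde N^k(u_k t)$, and, for each $N$, set $N^{N,k}_t := \tilde N^k(u^N_k t)$ and $T^{N,k}_i := T^k_i$, so that $\sigma^{N,k}_i = \tilde\sigma^k_i/u^N_k \to \tilde\sigma^k_i/u_k = \sigma^k_i$ almost surely by \eqref{e:Zuconv}. Construct $X^y_N$ and $X^y$ via \eqref{e:Xyt} and \eqref{e:Xynt} from these common data; the joint law is the desired coupling.

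Second, show that $\Gamma^y_N \to \Gamma^y$ uniformly on compacts in probability. For each truncation level $m\ge 1$ write $\Gamma^y_N = \Gamma^{y,m}_N + R^{y,m}_N$, where $\Gamma^{y,m}_N(t) := Z^N_y T_0\mathbf 1\{y\le m\} + \sum_{k=1}^{m} Z^N_k \sum_{i=1}^{N^{N,k}_t} T^k_i$ only collects contributions from $k\le m$. For fixed $m$ and $T>0$, only finitely many $\tilde\sigma^k_i$ with $k\le m$ lie below $(\max_{k\le m} u_k + 1)T$, the rescaled jump times $\sigma^{N,k}_i$ converge to $\sigma^k_i$, and $Z^N_k\to Z_k$, so $\Gamma^{y,m}_N\to\Gamma^{y,m}$ uniformly on $[0,T]$ almost surely. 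For the tail, since $R^{y,m}_N$ is nondecreasing, $\mathbb E\,\sup_{t\le T} R^{y,m}_N(t) = \mathbb E\, R^{y,m}_N(T) = T\sum_{k=m+1}^{M_N} Z^N_k u^N_k$, which by the tail hypothesis vanishes as $m\to\infty$ uniformly in $N$. Markov's inequality combined with a diagonal argument then delivers $\sup_{t\le T}|\Gamma^y_N(t) - \Gamma^y(t)|\to 0$ in probability.

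Third, transfer this to $J_1$ convergence of $X^y_N$ to $X^y$. The key geometric observation is $\mathrm{dist}(k,\infty) = 1/k$ on $\overline{\mathbb N}$, so any sojourn of $X^y_N$ at a site $k>m$ lies within $1/m$ of the value $\infty$ taken by $X^y$ outside its plateau set. Fix a horizon $S$ and $\varepsilon>0$, choose $m$ so that $1/m<\varepsilon$ and the total real-time duration of $X^y_N$-sojourns in $\{k>m\}$ before time $S$ is at most $\varepsilon$ with probability at least $1-\varepsilon$ (via step two applied to $R^{y,m}_N$ and $R^{y,m}$). For $N$ large, the ordered sequence of visits to $\{1,\dots,m\}$ in the internal interval $[0,\Gamma^{y,-1}(S)+1]$ coincides for $X^y_N$ and $X^y$, with holding times $Z^N_k T^k_i$ versus $Z_k T^k_i$ differing by $o(1)$. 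Construct a time change $\lambda_N$ that aligns precisely these plateaus; then $\|\lambda_N - \mathrm{id}\|_\infty = O(\varepsilon) + o(1)$ and $\sup_t \mathrm{dist}(X^y_N(\lambda_N(t)), X^y(t))\le 1/m + o(1)\le 2\varepsilon$, establishing $J_1$ convergence.

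The main obstacle is the construction of $\lambda_N$ in step three: unlike $X^y_N$, the limit $X^y$ passes through the fictitious state $\infty$ instantaneously, and (when $\sum u_k=\infty$) does so on a dense set of times, so one cannot hope to align the visits exactly. The correct viewpoint is that short excursions of $X^y_N$ to high-index states must be absorbed into these $\infty$-passages of $X^y$, which is legitimate precisely because the tail hypothesis bounds their total Lebesgue measure and the metric $\mathrm{dist}(\cdot,\infty)=1/k$ makes them visually close to $\infty$. All routine estimates reduce to the uniform control of $\Gamma$ established in step two.
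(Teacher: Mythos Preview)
Your overall strategy (couple the driving Poisson processes, truncate at level $m$, align the low-level plateaus by a time change, use the metric on $\overline{\bb N}$ to absorb the high-level excursions) is exactly the paper's, which in turn follows Fontes--Mathieu. Two points in your execution need attention, however.

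First, Step 2 contains a false claim. Under your coupling $N^{N,k}_t=\tilde N^k(u^N_k t)$, the jump times $\sigma^{N,k}_i=\tilde\sigma^k_i/u^N_k$ and $\sigma^{k}_i=\tilde\sigma^k_i/u_k$ are close but not equal; from ``finitely many jumps, jump times converge, jump heights converge'' you only get $J_1$ convergence of $\Gamma^{y,m}_N$ to $\Gamma^{y,m}$, \emph{not} uniform convergence (think of $\mathbf 1\{t\ge 1+1/N\}$ versus $\mathbf 1\{t\ge 1\}$). The paper avoids this by a different coupling: it uses planar Poisson processes $N^k$ on $\bb R_+\times\bb R_+$ and sets $N^k(u,\cdot)$ to count points of height $\le u$, so that for $N$ large one has $N^k(u^N_k,t)=N^k(u_k,t)$ for all relevant $t$ and $k\le m$, i.e.\ the jump times coincide \emph{exactly}. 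With that coupling the low-level plateaus of $X^y_N$ and $X^y$ begin at identical internal times, and the uniform estimates you want become trivial.

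Second, the construction of $\lambda_N$ in Step 3 has a gap. You need $\lambda_N$ to be a strictly increasing bijection; but between two consecutive visits of $X^y_N$ to $\{1,\dots,m\}$ there may be no visit to $\{m+1,\dots,M_N\}$ (the total high-level rate $\sum_{k>m}^{M_N}u^N_k$ is finite), while $X^y$ always makes such an excursion when $\sum_k u_k=\infty$. In that case the gap $[b_j,a_{j+1})$ on the $X^y$ side has no counterpart on the $X^y_N$ side, and the plateaus cannot be aligned by an increasing time change without incurring an error of order $1/k_j$, which is not small. The paper fixes this with an ``$m'$ trick'': choose a random $m'>m$ so that on each of the finitely many relevant internal gaps there is already a jump of some $N^k(u_k,\cdot)$ with $m<k\le m'$; then, since $u^N_k\to u_k$ for these finitely many $k$, the same holds for $X^y_N$ once $N$ is large. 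This guarantees both processes have positive-length excursions between consecutive low-level plateaus, and the piecewise-linear time change goes through.
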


\begin{proof}
The proof is a modification of the one of Lemma~3.11 in \cite{fm1}.
We first couple the Poisson point processes used to define
$\Gamma^y_N$ and $\Gamma^y$.  In some probability space $(\Omega,
\mathcal{A}, \mb Q)$ we construct a collection $\{N^k : k \in
\mathbb{N}\}$ of Poisson point processes in $\mathbb{R}_+ \times
\mathbb{R}_+$ with respect to the Lebesgue measure.  Let $N^k(u,t)$ be
the number of points falling in the rectangle $[0,t] \times [0,u]$.
For fixed $k \in \mathbb{N}$ and $u \geq 0$, $N^k(u,\,\cdot\,)$ is
distributed as a Poisson counting process with rate $u$.  Define
$\Gamma^y$ and $\Gamma^y_N$ as before, but using these coupled arrival
processes, with corresponding intensities $u_k$ and $u^N_k$.
Moreover, we also use the same jump clocks $\{T^k_i : k\in \bb N \,,\,
i \geq 1\}$ in their constructions.

Fix an integer $m \in \mathbb{N}$ and denote by $\{S^m_i : i\ge 1\}$
the arrival times of the process $N^1(u_1, \,\cdot\,) + N^2(u_2,
\,\cdot\,) + \cdots + N^m(u_m, \,\cdot\,)$, with $S^m_0 = 0$.  Fix
$T \geq 0$ and let
\begin{equation*}
L^m_T = \inf \{i \geq 1; \Gamma^y_N(S_i^m) \geq T 
\text{ for every $N \geq 1$} \}.
\end{equation*}
Since $(Z^N_1, u^N_1)$ converges to $(Z_1,u_1)$ and since
$\Gamma^y_N(s) \geq \sum_{1\le i \le N^1 (u^N_1, s)} Z^N_1 T^1_i$, by
the law of large numbers the above infimum is finite.

Since the sequence $\{u_k : k\in\bb N\}$ is not summable, there exists
a random integer $m'$ large enough so that almost surely
\begin{equation}
\label{l01}
\sum_{k = m+1}^{m'} Z_k \sum_{j =
  N^k(u_k, S^m_i)}^{N^k(u_k, S^m_{i+1}-)} 
T^k_j > 0 \;, \quad i=0, \dots, L^m_T\; ,
\end{equation}
where $f(s-)$ stands for the left limit at $s$ of a c\`adl\`ag
function $f$.

Since $u^N_k$ converges to $u_k$, almost surely there exists $N(m)$
such that
\begin{equation}
\label{e:NuN}
N^k(u^N_k, t) \;=\; N^k(u_k, t)
\end{equation}
for all $1\le k\le m$, $0\le t \le S^m_{L^m_T}$ and all $N\ge N(m)$.
By possibly increasing $N(m)$ we can also assume that,
\begin{equation}
\label{l02}
\inf_{N \geq N(m)} \sum_{k = m+1}^{m'} Z^N_k 
\sum_{j = N^k(u^N_k, S^m_j)}^{N^k(u^N_k, S^m_{j+1}-)} 
T^k_j \;>\; 0 \;, \quad i = 0, \dots, L^m_T\;.
\end{equation}

It follows from \eqref{e:NuN} that the arrival times $S^m_i$ are the
same for the process $X^y$ and $X^y_N$. Furthermore, by \eqref{l01},
\eqref{l02}, on each interval $(S^m_i, S^m_{i+1})$ there is at least
one arrival of a Poisson process $N^k(u_k, \cdot)$ for some $k>m$ and
one arrival for a Poisson process $N^k(u^N_k, \cdot)$ for some
$k>m$. In particular, in the time interval $[\Gamma^y(S^m_i),
\Gamma^y(S^m_{i+1}-))$ (resp. $[\Gamma^y_N(S^m_i),
\Gamma^y_N(S^m_{i+1}-))$), $0\le i < L^m_T$, the process $X^y$
(resp. $X^y_N$) performs an excursion in the set $\{1, \dots, m\}^c$,
while on each time interval $[\Gamma^y(S_i-), \Gamma^y(S_{i}))$
(resp. $[\Gamma^y_N(S_i-), \Gamma^y_N(S_{i}))$), $1\le i \le L^m_T$,
the processes $X^y$ and $X^y_N$ sit on the same site of $\{1, \dots,
m\}$.

For $N \geq N(m)$, define the time changes $\lambda^m_N:
[0,\Gamma^y_N(S^m_{L^m_T})] \to \mathbb{R}_+$ by
\begin{equation*}
\lambda^m_N(t) \;=\; 
\frac{Z_y}{Z^N_y} \, t \quad\text{for}\quad 0 \leq t < Z^N_yT_0\;.
\end{equation*}
For $0 \leq i \leq L^m_T - 1$, let
\begin{equation*}
\lambda^m_N(t) \;=\;
\Gamma^y(S^m_i) \;+\; \frac{\Gamma^y(S^m_{i+1}-) - \Gamma^y(S^m_i)}
{\Gamma^y_N(S^m_{i+1}-) - \Gamma^y_N(S^m_i)} \, [t-\Gamma^y_N(S^m_{i})]
\end{equation*}
if $\Gamma^y_N(S^m_i) \leq t \leq \Gamma^y_N(S^m_{i+1}-)$ and let
\begin{equation*}
\lambda^m_N(t) \;=\; \Gamma^y(S^m_{i+1}-) \;+\; 
\frac{\Gamma^y(S^m_{i+1}) - \Gamma^y(S^m_{i+1}-)}
{\Gamma^y_N(S^m_{i+1}) - \Gamma^y_N(S^m_{i+1}-)} \,
[t-\Gamma^y_N(S^m_{i+1}-)]
\end{equation*}
if $\Gamma^y_N(S^m_{i+1}-) \leq t\leq \Gamma^y_N(S^m_{i+1})$.

In view of our previous discussion, 
\begin{equation}
\label{e:diam}
\textnormal{dist}(X^y(\lambda^m_N(t)), X^y_N(t)) 
\leq (1+m)^{-1}, \text{ for every $t \leq T$}.
\end{equation}
Indeed, whenever $X^y(\lambda^m_N(t))$ differs from $X^y_N(t)$, they
are both above $m$, and the diameter of the set $\{m+1, m+2, \dots\}$
under dist$(\cdot,\cdot)$ is given by $(m+1)^{-1}$.

We claim that $\lambda^m_N$ is close to the identity: for any $\delta
> 0$,
\begin{equation}
\label{e:mbQtozero}
\lim_m \; \limsup_{N} \; \mb Q \Big[ \sup_{0\leq t \leq T} 
|\lambda^m_N(t)-t| > \delta \Big] = 0.
\end{equation}
To prove this claim, fix $m \geq 1$ and note that
\begin{equation*}
\sup_{0\leq t \leq T} |\lambda^m_N(t) - t| \;\leq\; 
\max_{0 \leq i \leq L^m_T} \Big\{|\Gamma^y(S^m_i) -
\Gamma^y_N(S^m_i)| \vee |\Gamma^y(S^m_i-) - \Gamma^y_N(S^m_i-)|\Big\}
\;. 
\end{equation*}
By construction, the right hand side is bounded above by
\begin{equation}
\label{l03}
\begin{split}
& |Z^N_y-Z_y|\, T_0 \;+\; \sum_{k = 1}^m |Z^N_k - Z_k| 
\sum_{j = 1}^{N^k(u^N_k, S^m_{L^m_T})} T^k_j \\
& \qquad + \sum_{k = m+1}^{\infty} Z_k 
\sum_{j = 1}^{N^k(u_k, S^m_{L^m_T})} T^k_j  \;+\; 
\sum_{k = m+1}^{M_N} Z^N_k \sum_{j = 1}^{N^k(u^N_k, S^m_{L^m_T})} T^k_j\;.
\end{split}
\end{equation}
For each fixed $m$, the first two terms vanish almost surely as $N$
goes to infinity. To estimate the other two terms note that $L^m_T \ge
L^{m+1}_T$, that $S^{m+1}_{L^{m+1}_T} \le S^m_{L^m_T}$ and that $N^k$,
$\{T^k_j : j\ge 1\}$ are independent of $S^m_{L^m_T}$ for $k>m$. In
particular, for $k>m$ and $u>0$,
\begin{equation*}
E_{\mb Q} \Big[ \sum_{j = 1}^{N^k(u, S^m_{L^m_T})} T^k_j\Big] \;=\; 
u \, E_{\mb Q} \big[ S^m_{L^m_T} \big] \;\le \; 
u \, E_{\mb Q} \big[ S^1_{L^1_T} \big]\;.
\end{equation*}
Last expectation is bounded because $S^1_{L^1_T}$ is defined through a
Poisson process. Therefore, as $Z_k u_k$ is summable in $k$, the third
term in \eqref{l03}, which does not depend on $N$, has finite
expectation and converges to zero almost surely and in $L^1(\mb Q)$ as
$m$ tends to infinity. Similarly, 
\begin{equation*}
E_{\mb Q} \Big[ \sum_{k = m+1}^{M_N} Z^N_k \sum_{j = 1}^{N^k(u^N_k,
  S^m_{L^m_T})} T^k_j \Big] \;\le\; \sum_{k \ge m+1} Z^N_k \, u^N_k\,
E_{\mb Q} \big[ S^1_{L^1_T} \big]\;.
\end{equation*}
By assumption, this expression vanishes as $N\uparrow\infty$ and then
$m\uparrow\infty$. This proves that \eqref{e:mbQtozero} holds in fact
in $L^1(\mb Q)$.

As a consequence of \eqref{e:mbQtozero}, one can extract a sequence
$m_N$ growing slowly enough such that
\begin{equation*}
\sup_{0 \leq t \leq T} |\lambda_N^{m_N} -t| 
\text{ converges to zero in probability as $N\uparrow\infty$}\;.
\end{equation*}
This, together with \eqref{e:diam} provides the two conditions of
Proposition~5.3 (c) in \cite{EK86}. Hence, $X^y_N$ converges in
probability to $X^y$ in the Skorohod's $J_1$ topology as $N$ tends to
infinity.
\end{proof}

\section{Scaling limit of trap models}
\label{sec8}

In this section we join the results of the last three sections to establish
the asymptotic behaviour of random walks on vertex-weighted graphs.

Throughout this section, we restrict our attention to weights given
by an i.i.d. sequence of random variables in the basin of attraction
of an $\alpha$-stable distribution, as in \eqref{13}. Let us first collect
some consequences of this choice of random variables. In particular we
obtain the convergence of the environment to a limiting distribution.

Recall that $\alpha\in(0,1)$ is the parameter of the stable
distribution. Let $\lambda$ be the measure on $\bb R \times
(0,\infty)$ given by $\lambda = \alpha w^{-(1+\alpha)} dx\, dw$.
Denote by $\{(z_i,\hat w_i) \in \bb R \times (0,\infty): i\ge 1\}$ the
marks of a Poisson point process of intensity $\lambda$ independent of
the sequence of graphs $\{G_N :N\ge 1\}$ and defined on a probability
space $(\Omega', \mc F', P)$. Define the random measure $\zeta$ on
$\bb R$ by
\begin{equation}
\label{09}
\zeta \;=\; \sum_{i\ge 1} \hat w_i \, \delta_{z_i}\;,
\end{equation}
and let $\zeta_t =\zeta((0,t])$, $t\ge 0$, be the $\zeta$-measure of the
interval $(0,t]$.  Let $F:[0,\infty) \to [0,\infty)$ be defined by
\begin{equation*}
P[\zeta_1 > F(t) ] \;=\; \bb P[W^N_x >t]\;, \quad t\ge 0\;.
\end{equation*}
The function $F$ is non-decreasing and right-continuous. Denote its
right-continuous generalized inverse by $F^{-1}$ and let
\begin{equation}
\label{52}
\hat{\tau}^N_i \;=\; F^{-1} \big (\bb V^{1/\alpha}[\zeta_{i/\bb V} 
- \zeta_{(i-1)/\bb V}]\big) \;, \quad 1\le i\le \bb V\;.
\end{equation}
Denote by $\tau^N_i$, $1\le i\le \bb V$, the sequence $\hat{\tau}^N_i$ in
decreasing order: $\hat \tau^N_i ={\tau}^N_{\sigma(i)}$ for some
permutation $\sigma$ of $\{1, \dots, \bb V\}$ and $\tau^N_i \ge
\tau^N_{i+1}$.

By \cite[Proposition 3.1]{fin}, $\{\hat\tau^N_i : 1\le i\le \bb V\}$
has the same distribution as $\{W^N_{x} : x\in V_N\}$. Therefore,
$(\tau^N_1, \dots, \tau^N_\bb V)$ has the same distribution as
$(W^N_{x^N_1}, \dots, W^N_{x^N_\bb V})$. Moreover, since $\bb V_N = |V_N|
\to \infty$ $\bb P$-almost surely, the same result implies that
$(\bb P \times P)$-almost surely,
\begin{equation}
\label{08}
\lim_{N\to\infty} \sum_{j\ge 1} |\, c_{\bb V} \tau^N_j - w_j\,| \;=\; 0 \;, 
\end{equation}
where $\mb W = \{w_i : i\ge 1\}$ represents the weights in decreasing
order of the measure $\zeta$ restricted to $[0,1]$:
\begin{equation}
\label{10}
\begin{split}
& w_1 = \max\{ \hat w_i : z_i \in [0,1]\}\;, \quad \\
& \quad w_{j+1} = \max\{ \hat w_i : z_i \in [0,1] \,,\, \hat w_i \not\in
\{w_1, \dots, w_j\}\}\;, \quad j\ge 1\;,  
\end{split}
\end{equation}
and $\{c_k : k\ge 1\}$ is the sequence defined by \eqref{aa1}.

Recall the definition of the function $\Psi_N$ introduced just before
the statement of Theorem \ref{t:trans}.

\begin{theorem}
\label{s08}
Let $G_N=(V_N,E_N)$ be a sequence of finite vertex-weighted graphs
fulfilling assumptions {\rm ({\bf A0})--({\bf A2})} for some sequences
$M_N$, $\ell_N$. Assume, furthermore, that there exist sequences
$L_N\uparrow\infty$, $\{\beta_N : N\ge 1\}$ and $\{\gamma_N : N\ge 1\}$
such that 
\begin{equation}
\label{11}
\lim_{N\to\infty} \kappa(L_N, M_N, \ell_N) \;=\; 0\;,
\end{equation}
and such that 
\begin{equation}
\label{04}
\begin{split}
& \lim_{N \to \infty} \Big( \frac{W^N_{x_j}}{\beta_N v_\ell(x_j)}
\,,\, \gamma_N \, v_\ell(x_j)\, \deg(x_j) \Big) = (Z_j,u_j)\;, \quad 
\text{for all } j\ge 1\;, \\
&\quad 
\lim_{m\to\infty}\limsup_{N\to\infty} \sum_{j= m}^{M_N} 
\frac{W^N_{x_j}}{\beta_N v_\ell(x_j)} \, \gamma_N \, v_\ell(x_j)\, 
\deg(x_j) \;=\;0\;.
\end{split}
\end{equation}
Suppose, finally, that $\Psi_N(X^N_{0})$ converges weakly to $k\in\bb
N$.  Then, for every $T>0$, the Markov chain $\{\Psi_N(X^N_{\beta_N
  t}) : 0\le t\le T\}$ converges to the $K$-process with parameters
$(Z_j,u_j)$ starting from $k$, in the topology introduced in Section
\ref{sec4}.
\end{theorem}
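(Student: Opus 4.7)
The plan is to assemble three results from the preceding sections: Theorem~\ref{dXYf} (coupling $X^N_{\beta_N\cdot}$ with a simpler trace chain $Y^N_{\beta_N\cdot}$), Theorem~\ref{l:XntoX} (convergence of a trace chain on a finite set to a $K$-process in Skorohod's $J_1$-topology), and Corollary~\ref{c1} (which upgrades a $d_T$-coupling together with a $d^{(2)}_T$-convergence into a $d_T$-convergence).

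First, since assumptions (A0)--(A2) and \eqref{11} are in force, Remark~\ref{s20} furnishes a sequence $M'_N\uparrow\infty$, $M'_N\leq M_N$, for which $M'_N$, $\ell_N$, $L_N$ jointly satisfy the full set (A0)--(A3). Passing from $M_N$ to $M'_N$ preserves both the coordinatewise limits in the first line of \eqref{04} and the tail-sum bound in the second (which only gets smaller), so after relabelling we may assume (A3) also holds. Next I would invoke Theorem~\ref{dXYf}, adapted to the deterministic initial state $x_k$ in the spirit of Corollary~\ref{s05}, to obtain a coupling $Q_N$ under which
\[
d_T\bigl(\Psi_N(X^N_{\beta_N\cdot})\,,\,\Psi_N(Y^N_{\beta_N\cdot})\bigr)\longrightarrow 0
\]
in probability. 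Up to the global rescaling allowed by Remark~\ref{s14}, the process $\Psi_N(Y^N_{\beta_N\cdot})$ is precisely the finite-state Markov chain $X^k_N$ of Section~\ref{sec5} with
\[
Z^N_j \;=\; \frac{W^N_{x_j}}{\beta_N \, v_\ell(x_j)}\;,\qquad u^N_j \;=\; \gamma_N\,\deg(x_j)\,v_\ell(x_j)\;,
\]
so that hypothesis \eqref{04} is exactly the pair of conditions required by Theorem~\ref{l:XntoX}. Applying that theorem yields weak convergence of $\Psi_N(Y^N_{\beta_N\cdot})$ to the $K$-process with parameters $(Z_j,u_j)$ starting from $k$, in Skorohod's $J_1$-topology. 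Since $J_1$-convergence entails Hausdorff convergence of the completed trajectory graphs (after composing with the isometry $\phi:\overline{\bb N}\to[0,1]$), it implies convergence in $d^{(2)}_T$; Corollary~\ref{c1} then transfers this to $d_T$-convergence of $\Psi_N(X^N_{\beta_N\cdot})$ to the $K$-process, as required.

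The main obstacle I expect is the initial condition inside Theorem~\ref{dXYf}: Corollary~\ref{s05} requires $\liminf_N \rho_N(z^N)>0$, whereas here $\rho_N(x_k^N)\sim u_k/\sum_j u_j$ can vanish when the entrance measure $\mb u$ is not summable. The remedy is to repeat the coupling argument of Proposition~\ref{dXY} starting deterministically at $x_k^N$: the total-variation bound \eqref{c04} is uniform in the starting state, and the stationary estimates \eqref{e:boundingmu} and \eqref{e:B5} admit analogues for $\mb P_{x_k}$ via the strong Markov property at the regeneration times $D_i$, the correction being a single extra excursion whose contribution is negligible after dividing by $K_N$. A secondary bookkeeping issue is the passage from $J_1$ to $d^{(2)}_T$ when the target space is $\overline{\bb N}$, but this is routine once one identifies $\overline{\bb N}$ with a bounded subset of $\bb R$ through $\phi$.
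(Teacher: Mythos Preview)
Your proposal matches the paper's proof: pass to a smaller $M'_N$ via Remark~\ref{s20} so that (A3) holds, invoke Theorem~\ref{dXYf} for the $d_T$-coupling with $Y^N$, identify $\Psi_N(Y^N_{\beta_N\cdot})$ (via Remark~\ref{s14}) with the chain of Section~\ref{sec5} so that \eqref{04} feeds directly into Theorem~\ref{l:XntoX}, and conclude with Corollary~\ref{c1}. The one ingredient the paper makes explicit and you do not is Skorohod's representation theorem, used to realize the $J_1$-convergence of $Y^N$ to the $K$-process almost surely on a common probability space before Corollary~\ref{c1} is applied; regarding the initial-condition obstacle you raise, the paper's own proof in fact glosses over it entirely, so your remark there is more careful than the original.
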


\begin{proof}
  Repeating the arguments presented below \eqref{06}, we obtain a new
  sequence $M'_N$ for which ({\bf A3}) holds, as well as \eqref{04}
  with $M'_N$ instead of $M_N$. Denote this new sequence by $M_N$.
  Under assumptions ({\bf A0})--({\bf A3}), Theorem \ref{dXYf}
  furnishes a coupling between the random walk $X^N_{\beta_N t}$ and a
  Markov process $Y^N_{\beta_N t}$ on $\{1, \dots, M_N\}$ whose
  $d_T$-distance converges to $0$ in probability. In view of Remark
  \ref{s14} and by Theorem \ref{l:XntoX}, under conditions \eqref{04},
  the Markov process $Y^N_{\beta_N t}$ converges to the $K$-process
  with parameters $(Z_j,u_j)$ in the Skorohod's $J_1$ topology. By
  Skorohod's representation theorem, there exists a probability space
  in which this convergence take place almost surely. It remains to
  apply Corollary \ref{c1}.
\end{proof}

In view of Remark \ref{s12}, we may replace condition ({\bf A0}) by
assumptions \eqref{e:c2} and \eqref{14}. 

\begin{theorem}
\label{s13}
Let $G_N=(V_N,E_N)$ be a sequence of finite vertex-weighted graphs
fulfilling assumptions {\rm ({\bf A1})--({\bf A3})} for some sequences
$M_N$, $\ell_N$, $L_N$. Assume that there exists a sequence of subsets
$B_N = \{x^N_1, \dots, x^N_{I_N}\}$, $I_N\le M_N$,
$I_N\uparrow\infty$, satisfying \eqref{e:c2}, \eqref{14}. Suppose,
furthermore, that condition \eqref{04} is in force and that
$\Psi_N(X^N_{0})$ converges weakly to $k\in\bb N$.  Then, for every
$T>0$, the Markov chain $\{\Psi_N(X^N_{\beta_N t}) : 0\le t\le T\}$
converges to the $K$-process with parameters $(Z_j,u_j)$ starting from
$k$, in the topology introduced in Section \ref{sec4}.
\end{theorem}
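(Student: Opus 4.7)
The plan is to reduce Theorem~\ref{s13} to the machinery already set up, mimicking the proof of Theorem~\ref{s08} but replacing the role of \eqref{A0} by the weaker hypothesis involving the sequence $B_N = \{x^N_1, \dots, x^N_{I_N}\}$ together with Remark~\ref{s12}. The only genuine new verification is to check that \eqref{e:c1}--\eqref{e:c3} still follow from our assumptions; once that is done, Proposition~\ref{dXY} and Theorem~\ref{l:XntoX} combine, via Corollary~\ref{c1}, to give the result.

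First I would, as in the proof of Theorem~\ref{s08}, invoke the paragraph following \eqref{06} to replace $M_N$ by a (possibly smaller) sequence $M'_N \uparrow \infty$ for which ({\bf A3}) is fulfilled while ({\bf A1}), ({\bf A2}) and \eqref{04} are preserved and the hypothesis on $B_N$ is unaffected (one may also further trim $I_N$ so that $I_N \le M'_N$, keeping \eqref{e:c2} and \eqref{14}). Rename $M'_N$ back to $M_N$.

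Next I would establish the analogue of Lemma~\ref{s00} in the present setting, i.e. verify \eqref{e:c1}, \eqref{e:c2} and \eqref{e:c3} for the given sequence $B_N$. Condition \eqref{e:c1} is derived exactly as in Lemma~\ref{s00}, using only ({\bf A2}) with $B_N = A_N$ and ({\bf A3}); it does not depend on \eqref{A0}. Condition \eqref{e:c2} is part of the hypothesis. For \eqref{e:c3}, the computation performed at the end of the proof of Lemma~\ref{s00} gives
\begin{equation*}
\frac{\beta_N}{E_{\rho}\big[W^N_x/v_\ell(x)\big]} \, \rho(B_N)
\;\le\; E_{\nu_B}\!\Big[\frac{\beta_N\, v_\ell(x)}{W^N_x}\Big]
\;\le\; C\, |B_N|
\end{equation*}
for some constant $C$, by the second part of ({\bf A2}). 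Multiplying by $\nu(A_N^c)$ and using \eqref{14} yields \eqref{e:c3}.

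With \eqref{e:c1}--\eqref{e:c3} in hand, Lemma~\ref{sd01} produces a sequence $K_N$ satisfying \eqref{B1}--\eqref{B5}, and Proposition~\ref{dXY} then yields a coupling $Q_N$ between $\{X^N_{\beta_N t}\}$ and the jump-rescaled Markov chain $\{Y^N_{\beta_N t}\}$ on $A_N$ (of waiting-time mean $W^N_x/\beta_N v_\ell(x)$ and jump law $\rho_N$) such that $d_T(X^N_{\beta_N\cdot}, Y^N_{\beta_N\cdot}) \to 0$ in $Q_N$-probability; this is precisely the content of Remark~\ref{s12}. In parallel, the hypothesis \eqref{04}, together with the observation of Remark~\ref{s14} that the process $Y^N$ is unaffected by multiplying the weights $\deg(x_j) v_\ell(x_j)$ by the common factor $\gamma_N$, places us in the setting of Theorem~\ref{l:XntoX} with parameters $(Z^N_j,u^N_j) = \bigl(W^N_{x_j}/\beta_N v_\ell(x_j),\; \gamma_N v_\ell(x_j)\deg(x_j)\bigr)$. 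Hence $\Psi_N(Y^N_{\beta_N\cdot})$ converges in the Skorohod $J_1$ topology, starting from $k$ (here I use the weak convergence of $\Psi_N(X^N_0)$ to upgrade the stationary-start coupling of Proposition~\ref{dXY} to the fixed-start version via Corollary~\ref{s05}, exactly as in the proof of Theorem~\ref{s08}), to the $K$-process with parameters $(Z_j,u_j)$ started at $k$.

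Finally, using Skorohod's representation theorem I would realize the $J_1$-convergence of $\Psi_N(Y^N_{\beta_N \cdot})$ on a common probability space as almost sure convergence; since $J_1$-convergence implies $d^{(2)}_T$-convergence (the Hausdorff distance between graphs is dominated by the $J_1$ distance), Corollary~\ref{c1} applied to $X_n = \Psi_N(X^N_{\beta_N \cdot})$, $Y_n = \Psi_N(Y^N_{\beta_N \cdot})$, and $Y_\infty$ the limiting $K$-process concludes the $d_T$-convergence. The only step that required any new work is the verification of \eqref{e:c3} from \eqref{14}; everything else is a bookkeeping reassembly of Remark~\ref{s12}, Proposition~\ref{dXY}, Theorem~\ref{l:XntoX} and Corollary~\ref{c1}, and there is no substantive obstacle.
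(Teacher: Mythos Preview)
Your proposal is correct and follows essentially the same route as the paper. The paper's own proof is a three-line invocation of Remark~\ref{s12}, Theorem~\ref{l:XntoX}, Skorohod representation, and Corollary~\ref{c1}; you have simply unpacked the content of Remark~\ref{s12} by explicitly verifying \eqref{e:c1}--\eqref{e:c3} (in particular deriving \eqref{e:c3} from \eqref{14} via ({\bf A2}), which is precisely the implicit claim behind Remark~\ref{s12}). One small redundancy: your first step, passing to a smaller $M'_N$ to secure ({\bf A3}), is unnecessary here because Theorem~\ref{s13} already assumes ({\bf A3}) outright (unlike Theorem~\ref{s08}, which only assumes the weaker \eqref{11}).
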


\begin{proof}
By Remark \ref{s12}, there exists a coupling between the random walk
$X^N_{\beta_N t}$ and a Markov process $Y^N_{\beta_N t}$ on $\{1,
\dots, M_N\}$ whose $d_T$-distance converges to $0$ in probability. By
Theorem \ref{l:XntoX}, under conditions \eqref{04}, the Markov process
$Y^N_{\beta_N t}$ converges to the $K$-process with parameters
$(Z_j,u_j)$ in the Skorohod's $J_1$ topology. By Skorohod
representation theorem, there exists a probability space in which this
convergence take place almost surely. It remains to apply Corollary
\ref{c1}.  \end{proof}

\section{Pseudo-transitive graphs}
\label{sec7}

We prove in this section Theorem \ref{t:trans}, inspired by Theorem
\ref{s13}, and we apply this result to some pseudo-transitive
graphs. The assumptions ({\bf A1})--({\bf A3}), \eqref{e:c2},
\eqref{14}, \eqref{04} simplify in this context because the degree and
the escape probability from the deep traps do not depend on the
specific vertex.

\begin{proof}[Proof of Theorem \ref{t:trans}]
Fix an increasing sequence $\ell_N$ and a sequence of
pseudo-transitive graphs $G_N$ with respect to the sequence $\ell_N$.
We first derive some consequences of assumptions ({\bf B0})--({\bf
  B2}) and \eqref{pseudo}.

It follows from these hypotheses that there exists an increasing
sequence $M_N\uparrow\infty$ such that
\begin{equation*}
\begin{split}
& \lim_{N\to\infty} M_N^2 \, \bb E \Big[ \frac{|B(\mf x,
  2\ell_N)|}{\bb V_N} \Big] \;=\; 0\;, \quad
\lim_{N\to\infty} M_N \, \bb P \big[ ( \mf x, B(\mf x, \ell_N)) \not \equiv
(\mf y, B(\mf y, \ell_N))  \big] \;=\; 0\;, \\
& \quad \lim_{N\to\infty} M_N^5 \, \bb E \Big[ 
\sup_{\substack{y \not \in B(\mf x, \ell_N)}} 
\mb P_y \big[\bb H_{\mf x} \leq L_Nt_{\text{mix}} \big] \, \Big] 
\;=\; 0\;, \quad \lim_{N\to\infty} M_N^3 \, 2^{-L_N} \;=\; 0\;.
\end{split}
\end{equation*}
where we replaced $(\mf x, B(\mf x,\ell_N)) \not \equiv (\mf y, B(\mf
y,\ell_N))$ by $B(\mf x,\ell_N) \not \equiv B(\mf y,\ell_N)$.

Let $\Sigma_N^j$, $1\le j\le 3$ be the events 
\begin{equation*}
\begin{split}
& \Sigma_N^1 \;=\; \bigcap_{1\le i\not = j\le M_N} 
\big\{ B(x^N_i, \ell_N) \cap B(x^N_j, \ell_N) = \varnothing
\big\}\;, \\
&\quad \Sigma_N^2 \;=\; \bigcap_{j=1}^{M_N} \big\{ (x^N_1, B(x^N_1, \ell_N))
\equiv (x^N_j, B(x^N_j, \ell_N)) \big\}\;, \\
&\qquad \Sigma_N^3 \;=\;  \Big\{ M^3_N \max_{1\le j\le M_N}
\sup_{\substack{y \not \in B(x^N_j, \ell_N)}} 
\mb P_y \big[\bb H_{x^N_j} \leq L_N t_{\text{mix}} \big] 
\le M^{-1}_N \Big\}\;. 
\end{split}
\end{equation*}
In the places where the vertices of the graph appear, as in the
definition of the set $\Sigma^1_N$, the sequence $M_N$ obtained above
has to be replaced by $\min\{M_N, \bb V_N\}$, where $\bb V_N$ stands
for the number of vertices of the random graph $G_N$.  It is easy to
see that all three events have probability asymptotically equal to
one. We prove this assertion for $\Sigma_N^1$ and leave to the reader
the proof for the other two. By definition, $\bb P[(\Sigma_N^1)^c]$ is
bounded above by
\begin{equation*}
\sum_{1\le i\not = j\le M_N} \bb P \big[ B(x^N_i, \ell_N) 
\cap B(x^N_j, \ell_N) \not = \varnothing \big] \;\le\;
M_N^2\, \bb P \big[ B(x^N_1, \ell_N) 
\cap B(x^N_2, \ell_N) \not = \varnothing \big] 
\end{equation*}
because $x^N_1, \dots, x^N_{\bb V}$ is uniformly distributed. By this
same reason, conditioning on $x^N_1$, we obtain that the right-hand
side is equal to
\begin{equation*}
M_N^2 \, \bb E \Big[ \frac{|B(x^N_1, 2\ell_N)|-1}{\bb V_N-1} \Big]\;,
\end{equation*}
which vanishes as $N\uparrow \infty$ in view of the definition of the
sequence $M_N$.

Let $A_N = \{x^N_1, \dots, x^N_{M_N}\}$. By hypothesis ({\bf B0}),
$\nu_N(A^c_N)$ converges to $0$ in $\bb P$-probability. In particular,
there exists a deterministic sequence $I_N \uparrow\infty$, $I_N\le
M_N$, such that $I_N \nu_N(A^c_N)$ converges to $0$ in $\bb
P$-probability. Let $B_N = \{x^N_1, \dots x^N_{I_N}\}$. Since $I_N
\uparrow\infty$, by hypothesis ({\bf B0}), $\nu_N(B^c_N)$ converges to
$0$ in $\bb P$-probability.  Therefore, there exists a sequence
$\epsilon_N \downarrow 0$ for which
\begin{equation*}
\lim_{N\to\infty} \bb P \Big[ \nu_N(B^c_N) + I_N \nu_N(A^c_N) \ge 
\epsilon_N \, \Big] \;=\; 0\;.
\end{equation*}
Let $\Sigma_N^4 = \{ \nu_N(B^c_N) + I_N \nu_N(A^c_N) < \epsilon_N
\}$.

We turn now into the proof of the theorem which relies on Theorem
\ref{s13}. Recall the definition of the random weights $\hat\tau^N_j$,
$1\le j\le \bb V_N$, introduced at the beginning of Section
\ref{sec8}. Since $\{\hat\tau^N_j : 1\le j\le \bb V_N\}$ has the same
distribution as $\{W^N_{j} : 1\le j\le N\}$, we may replace the latter
random weights by the former and assume that the random walk $X^N_t$
evolves among random traps with depth $\tau^N_j$ instead of
$W^N_{x_j}$.

To show that the pair $(c_{\bb V} \tau^N, \Psi_N(X^N_{t \beta_N}))$
converges weakly to $(\mb w, K_t)$, it is enough to show that any
subsequence $\{N_j : j\ge 1\}$ possesses a sub-subsequence $\mf n$
such that $(c_{\mf n} \tau^{\mf n},\Psi_{\mf n}(X^{\mf n}_{t
  \beta_{\mf n}}) )$ converges to $(\mb w, K_t)$. Fix, therefore, a
subsequence $N_j$.

By \eqref{08}, the ordered sequence $(c_{N_j}\tau^{N_j}_1, \dots,
c_{N_j}\tau^{N_j}_{\bb V})$ converges almost surely in $L^1(\bb N)$ to
$\mb w =(w_1, w_2, \dots)$. This proves the weak convergence of the
first coordinate.  Let $\Sigma_{N_j} = \cap_{1\le k\le 4}
\Sigma^k_{N_j}$. There exists a sub-subsequence, denoted by $\mf n$,
for which
\begin{equation*}
\bb P \Big[ \bigcup_{\mf n_0\ge 1} \bigcap _{\mf n \ge \mf n_0}
\Sigma_{\mf n} \Big] 
\;=\; 1\;.
\end{equation*}

We affirm that all assumptions of Theorem \ref{s13} hold on the set
$\cup_{\mf n_0\ge 1} \cap _{\mf n \ge \mf n_0} \Sigma_{\mf
  n}$. Indeed, recall that $\beta^{-1}_{\mf n} = c_{\mf n} v^{\mf
  n}_{\ell_{\mf n}} (x^{\mf n}_1)$.  Condition ({\bf A1}) follows from
the definition of the set $\Sigma^1_{\mf n}$. On the set
$\Sigma^2_{\mf n}$, the escape probabilities $v_\ell(x^{\mf n}_j)$ and
the degrees $\deg(x^{\mf n}_j)$ are all the same for $1\le j\le M_{\mf
  n}$. In particular, by definition of the sequence $\beta_{\mf n}$,
condition ({\bf A2}) becomes
\begin{equation*}
\limsup_{\mf n \to \infty}  \frac{\sum_{j=1}^{J_{\mf n}} c_{\mf n} (\tau^{\mf
    n}_j)^2 } {\sum_{j=1}^{J_{\mf n}} \tau^{\mf n}_j } \;<\; \infty\;,
\quad
\limsup_{\mf n \to \infty} \frac 1
{\sum_{j=1}^{J_{\mf n}} c_{\mf n} \tau^{\mf n}_j } \;<\; \infty\;.
\end{equation*}
for all sequences $J_{\mf n}$ such that $J_{\mf n}\le M_{\mf n}$,
$J_{\mf n}\uparrow\infty$. Since the sequence $\tau_j^{\mf n}$ is
decreasing, the first ratio is bounded by $c_{\mf n} \tau^{\mf n}_1$,
and these bounds are a consequence of \eqref{08}. Condition ({\bf A3})
follows from the definition of the sequence $M_N$ and from the
definition of the set $\Sigma^3_{\mf n}$. Conditions \eqref{e:c2},
\eqref{14} follow from the definition of the set $\Sigma^4_{\mf
  n}$. Finally, on the set $\Sigma^2_{\mf n}$, $v_\ell (x^{\mf n}_j)
\deg(x^{\mf n}_j)$, $1\le j\le M_{\mf n}$, is constant and the
hypotheses \eqref{04} with $\gamma_{\mf n} = [v_\ell (x^{\mf n}_1)
\deg(x^{\mf n}_1)]^{-1}$ and $(Z_j,u_j) = (w_j, 1)$ follow from
\eqref{08}. This proves the affirmation.

We may now apply Theorem \ref{s13} to conclude that the Markov chain
$\Psi_{\mf n}(X^{\mf n}_{\beta_{\mf n} t})$ converges to the
$K$-process with parameters $(w_j,1)$ starting from $k$, in the
topology introduced in Section \ref{sec4}. This concludes the proof of
Theorem \ref{t:trans}.
\end{proof}

We conclude this section with some examples of graphs satisfying the
assumptions of Theorem~\ref{t:trans}.

\subsection{Hypercube}

We prove in this subsection the convergence of the trap model on the
$n$-dimensional hypercube towards the $K$-process associated to
constant entrance measure. This result has been established in
\cite{fli1} under the stronger Skorohod's $J_1$ topology with a
different approach. Here we give a proof as an application of
Theorem~\ref{t:trans}.

Let $N = 2^n$, $n \geq 1$, and let $G_N$ be the $n$-dimensional
hypercube $\{0,1\}^n$ with edges connecting any two points that differ
by only one coordinate. By estimate (6.15) in \cite{LPW09},
$t^N_{\textnormal{mix}} \ll n^2$.

\begin{proposition}
\label{p:hyper}
The assumptions of Theorem~\ref{t:trans} are in force for the
hypercube $G_N$ with $\ell_N = \log_2(N)/10 = n/10$.
\end{proposition}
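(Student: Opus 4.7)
The plan is to verify assumptions \eqref{B0}, \eqref{BB0}, and \eqref{BB1} of Theorem~\ref{t:trans} in turn; condition \eqref{pseudo} is immediate because the hypercube is vertex-transitive. Throughout, write $\eta(p) = -p\log_2 p - (1-p)\log_2(1-p)$ for the binary entropy function.

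For \eqref{B0}, I will use the $n$-regularity to reduce the invariant measure to $\nu_N(x) = W^N_x / \sum_{y\in V_N} W^N_y$. Coupling the ordered weights $W^N_{x^N_j}$ to the sequence $\tau^N_j$ of Section~\ref{sec8}, the $\ell^1(\bb N)$-convergence \eqref{08} of $(c_{\bb V}\tau^N_j)_{j\ge 1}$ to $(w_j)_{j\ge 1}$ gives, almost surely,
\begin{equation*}
\nu_N\bigl(\{x^N_1,\dots,x^N_{J_N}\}^c\bigr) \;=\; \frac{\sum_{j>J_N} c_{\bb V}W^N_{x^N_j}}{\sum_{j\ge 1} c_{\bb V}W^N_{x^N_j}} \;\longrightarrow\; \frac{\sum_{j>J_N} w_j}{\sum_{j\ge 1} w_j}
\end{equation*}
for every $J_N\uparrow\infty$, and the right side tends to $0$ as $J_N\uparrow\infty$; dominated convergence (the ratio lies in $[0,1]$) then yields \eqref{B0}.

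For \eqref{BB0}, I appeal to vertex-transitivity, which makes $|B(\mf x, 2\ell_N)|$ deterministic and equal to $\sum_{k=0}^{\lfloor n/5\rfloor}\binom{n}{k} \le 2^{n\eta(1/5)}$ by the standard binomial entropy bound. Since $\eta(1/5) < 1$, the ratio $|B(\mf x, 2\ell_N)|/|V_N|$ is exponentially small.

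The heart of the proof is \eqref{BB1}. From \cite[(6.15)]{LPW09}, $t^N_{\text{mix}} = O(n\log n)$, and vertex-transitivity reduces \eqref{BB1} to a deterministic bound on $\sup_{y\notin B(x,\ell_N)}\mb P_y[\HH_x \le L_N t^N_{\text{mix}}]$ for a fixed vertex $x$. I plan to bound the heat kernel directly: the product structure of independent coordinate flips of the lazy chain yields
\begin{equation*}
\mb P_y[\X^N_t = x] \;=\; 2^{-n}(1-\beta_t)^{d}(1+\beta_t)^{n-d}\;, \quad d = d(y,x)\;, \ \beta_t = (1-1/n)^t\;,
\end{equation*}
and a direct calculation shows that $\max_{t\ge 0}\mb P_y[\X^N_t=x]$ equals $(d/n)^d((n-d)/n)^{n-d} = 2^{-n\eta(d/n)}$ when $d \le n/2$, and $2^{-n}$ otherwise. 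For $y \notin B(\mf x, \ell_N)$ one has $d > n/10$, so this maximum is uniformly bounded by $2^{-n\eta(1/10)}$, which is exponentially small in $n$. The strong Markov property at $\HH_x$, combined with $\mb E_x[N_x(T/2)] \ge 1$, gives the elementary inequality
\begin{equation*}
\mb P_y[\HH_x \le T/2] \;\le\; \mb E_y[N_x(T)] \;=\; \sum_{t=0}^T \mb P_y[\X^N_t = x]\;,
\end{equation*}
and specialising to $T = 2L_N t^N_{\text{mix}}$ produces
\begin{equation*}
\sup_{y\notin B(\mf x,\ell_N)}\mb P_y[\HH_{\mf x}\le L_N t^N_{\text{mix}}] \;\le\; 2 L_N t^N_{\text{mix}} \cdot 2^{-n\eta(1/10)}\;,
\end{equation*}
which vanishes exponentially in $n$ for, e.g., $L_N = n$. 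The main step is controlling the hitting probability starting from a vertex at graph distance at least $n/10$ from $\mf x$; the explicit product form of the lazy hypercube heat kernel is precisely what makes this direct estimate elementary and avoids any subtle iteration of mixing-time bounds.
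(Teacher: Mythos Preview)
Your arguments for pseudo-transitivity, \eqref{B0} and \eqref{BB0} are fine and match the paper's. The gap is in the heat-kernel step for \eqref{BB1}: the identity
\[
\mb P_y[\X^N_t=x]\;=\;2^{-n}(1-\beta_t)^d(1+\beta_t)^{n-d},\qquad \beta_t=(1-1/n)^t,
\]
is \emph{not} valid for the lazy chain $\X^N$ defined in the paper. That chain updates exactly one coordinate per step, so the coordinates at time $t$ are not independent given the starting point; the displayed product formula is the transition kernel of the different chain in which each coordinate is independently resampled with probability $1/n$. A quick check: for $n=2$, $t=1$, $d=0$ the lazy walk stays put with probability $1/2$, while your formula gives $9/16$.

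The strategy is easily repaired. Describe the lazy chain as ``pick $I_s\in\{1,\dots,n\}$ uniformly and set coordinate $I_s$ to a fresh fair bit'', and let $A_t=\{I_1,\dots,I_t\}$. Conditionally on $A_t$, the refreshed coordinates are i.i.d.\ uniform and the others retain their initial values, so with $D=\{i:x_i\ne y_i\}$,
\[
\mb P_y[\X^N_t=x]\;=\;\mb E\bigl[\,2^{-|A_t|}\,\mb 1\{D\subseteq A_t\}\,\bigr]\;\le\;2^{-d}\,,
\]
since $|A_t|\ge|D|=d$ on the event $\{D\subseteq A_t\}$. Feeding this into your occupation-time inequality (or simply the union bound over $t\le T$) yields, for $d\ge n/10$ and $T=L_N t^N_{\text{mix}}$,
\[
\sup_{y\notin B(\mf x,\ell_N)}\mb P_y[\HH_{\mf x}\le L_N t^N_{\text{mix}}]\;\le\;\bigl(2L_N t^N_{\text{mix}}+1\bigr)\,2^{-n/10},
\]
which vanishes for any polynomially growing $L_N$. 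Note that the claimed maximiser $\beta=1-2d/n$ and the resulting bound $2^{-n\eta(d/n)}$ are specific to the (incorrect) product kernel and should be dropped.

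The paper takes a different route for \eqref{BB1}: it quotes Lemmas~3.2(i) and~3.6(i) of \cite{CG08} directly. Your approach, once corrected, is more self-contained and arguably more elementary; the paper's is shorter because the needed estimate is already in the literature.
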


\begin{proof}
Since the graph is transitive, condition \eqref{pseudo} is satisfied
and ({\bf B0}) follows from \eqref{08}. To estimate the ratio in
\eqref{BB0} note that $|B(0,2\ell_N)|/\bb V_N$ is equal to the
probability that the sum of $n$ Bernoulli($1/2$) independent random
variables is less than or equal to $2\ell_N = n/5$. By the law of
large numbers, this probability vanishes as $n\uparrow\infty$.

To show that \eqref{BB1} is in force, we could compare the distance
$d(0, X_t)$ with an Ehrenfest's urn, see \cite[Section~2.3]{LPW09},
and proceed with a calculation based on a birth and death chain. For
simplicity, we give instead a reference implying the result. By
Lemmas~3.6~(i) and 3.2 (i) of \cite{CG08}, with $m(N) = N^2$ and $a =
1$, there exists a finite constant $C_0$ independent of $n$ such that
\begin{equation*}
\begin{split}
& \sup_{y \not \in B(0, \ell_N)} \mb P_y \big[ \bb H_0 \leq n^2 \big] 
\;\leq\; C_0 \Big(n^2/N + \binom{n}{{n}/{10}}^{-1} n^{1/2}
\log(n)\Big) \\
& \qquad \leq\; C_0 \Big(n^2/N + (10)^{-n/10} n^{1/2} \log(n)\Big)\;,
\end{split}
\end{equation*}
which vanishes as $n\uparrow\infty$, proving \eqref{BB1}.
\end{proof}

To complete the description of the asymptotic behavior of the trap
model on the hypercube, it remains to determine the time scale
$\beta_N$. By a computation based on a birth-and-death chain, the
escape probability converges to $1$ as $N\uparrow\infty$, and
therefore $\lim_{N} \beta_N \, c_N =1$.

\subsection{Discrete torus for $d \geq 2$}

In this subsection the graph $G_N$ stands for the $d$-dimensional
discrete torus $\mathbb{T}^d_N = (\mathbb{Z}/ N\mathbb{Z})^d$, $d \geq
2$, endowed with nearest neighbors edges. By
\cite[Theorem~5.5]{LPW09},
\begin{equation}
\label{e:mixtor}
t_{\textnormal{mix}}^N \leq C_0 N^2
\end{equation}
for some $C_0 = C_0(d)$. This constant may change from line to line,
but will only depend on $d$.

We proved in \cite{jlt1} that in this context the trap model converges
to the $K$-process. The next proposition shows that this result
follows from Theorem \ref{t:trans}.

\begin{proposition}
\label{s11}
The assumptions of Theorem~\ref{t:trans} are in force for the
$d$-dimen\-sion\-al torus $G_N$ with 
\begin{equation*}
\ell_N =
\begin{cases}
N^{1/2} & \text{$d \geq 3$\,,} \\
\frac{N}{\log^{1/4} N} & \text{$d = 2$\,,}
\end{cases} 
\qquad
L_N =
\begin{cases}
\log^2 N & \text{$d \geq 3$\,,} \\
\log^{1/4} N & \text{$d = 2$\,.}
\end{cases}
\end{equation*}
\end{proposition}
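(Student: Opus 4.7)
The plan is to verify the four hypotheses of Theorem~\ref{t:trans} for $G_N = \mathbb{T}^d_N$ with the stated $\ell_N$ and $L_N$. Pseudo-transitivity \eqref{pseudo} is immediate since the torus is vertex-transitive. Because the degree is the constant $2d$, the stationary measure reduces to $\nu_N(x) = W^N_x/\sum_y W^N_y$, so
\begin{equation*}
\nu_N(\{x^N_1,\dots,x^N_{J_N}\}^c) \;=\; \frac{\sum_{j>J_N}\hat W^N_j}{\sum_{j\le\bb V_N}\hat W^N_j}\,,
\end{equation*}
and, multiplying numerator and denominator by $c_{\bb V}$, identity \eqref{08} together with $\sum_j w_j<\infty$ yields $({\bf B0})$. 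For $({\bf B1})$, I use $|B(\mf x,2\ell_N)|\le (4\ell_N+1)^d$, so the ratio to $\bb V_N = N^d$ is $O(N^{-d/2})$ if $d\ge3$ and $O(\log^{-1/2}N)$ if $d=2$, vanishing in both cases.

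The substantive step is $({\bf B2})$. By transitivity it reduces to showing $\sup_{y\notin B(0,\ell_N)} \mb P_y[\bb H_0 \le L_N t^N_{\text{mix}}] \to 0$. The plan is to combine \eqref{e:mixtor} with the elementary strong-Markov inequality
\begin{equation*}
\mb P_y\big[\bb H_0 \le T/2\big] \;\le\;
\frac{\sum_{k=0}^{T} p^k_N(y,0)}{\sum_{k=0}^{T/2} p^k_N(0,0)}\,, \qquad T\;=\;2 L_N t^N_{\text{mix}}\,,
\end{equation*}
where $p^k_N$ is the $k$-step kernel of $\X^N_k$ and the bound follows from applying the strong Markov property at $\bb H_0$ and the monotonicity in $t$ of $\mb E_0[\#\{k\le t:\X^N_k=0\}]$. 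Both sums I estimate by the standard local CLT on the torus, $p^k_N(y,x)\le Ck^{-d/2}\exp(-c\,d(y,x)^2/k)$ for $1\le k\le N^2$, together with $p^k_N(y,x)\le p^{2k}_N(0,0)\le C/N^d$ for $k\ge N^2$ (the first inequality is Cauchy--Schwarz on the vertex-transitive graph, the second is Fourier diagonalization). The Gaussian factor makes the contribution of $k\le\ell_N^2$ to the numerator of order $\ell_N^{-(d-2)}$; the range $\ell_N^2\le k\le N^2$ contributes $O(\ell_N^{-(d-2)})$ for $d\ge 3$ and $O(\log(N^2/\ell_N^2))=O(\log\log N)$ for $d=2$; and the tail $N^2\le k\le T$ contributes $O(L_N N^{2-d})$. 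Thus the numerator is $O(N^{-(d-2)/2})$ for $d\ge3$ and $O(\log^{1/4}N)$ for $d=2$. The denominator is at least $1$ in the transient regime $d\ge3$ (already from the $k=0$ term) and at least $\sum_{k=1}^{N^2} c/k\ge c\log N$ in the recurrent regime $d=2$. Dividing produces $O(N^{-(d-2)/2})$ and $O(\log^{-3/4}N)$ respectively, both $o(1)$.

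The main obstacle will be justifying the local CLT bound $p^k_N(y,x)\le Ck^{-d/2}\exp(-c\,d(y,x)^2/k)$ uniformly for $1\le k\le N^2$ on the torus, because a naive coupling with the simple random walk on $\mathbb{Z}^d$ is valid only up to times of order $\ell_N^2$, forcing one to promote the estimate to scale $N^2$ either by a Fourier/reflection argument or by quoting a Gaussian heat-kernel estimate of Nash/Carne--Varopoulos type. This bound is the technical core of the approach of \cite{jlt1} as well, and once it is in hand the rest of the verification is an elementary summation using $t^N_{\text{mix}}\le C_0N^2$ from \eqref{e:mixtor}.
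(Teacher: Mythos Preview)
Your argument is correct. The verification of \eqref{pseudo}, ({\bf B0}) and ({\bf B1}) matches the paper's. For ({\bf B2}) you take a genuinely different route: the paper quotes \cite[Lemma~3.1]{TW10} for $d\ge 3$, and for $d=2$ it lifts the walk to $\mathbb{Z}^2$ via the covering map, controls the exit from a ball of radius $N\log^{1/4}N$ by an exponential Doob inequality, and then bounds the hitting probability of a single point before exiting that ball by the logarithmic potential estimate \cite[Proposition~1.6.7]{Law91}. Your approach instead uses the Green's-function inequality $\mb P_y[\bb H_0\le T/2]\le \big(\sum_{k\le T}p^k_N(y,0)\big)/\big(\sum_{k\le T/2}p^k_N(0,0)\big)$ together with Gaussian heat-kernel upper bounds on the torus, and it treats all $d\ge 2$ uniformly. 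The trade-off is that your method is self-contained and avoids the two external citations, at the price of taking the Nash/Carne--Varopoulos type estimate $p^k_N(y,x)\le Ck^{-d/2}\exp(-c\,d(y,x)^2/k)$ for $k\le N^2$ as input; the paper's $d=2$ argument avoids heat-kernel machinery entirely by working with harmonic functions. One small point to make precise in your write-up: to ensure the tail range $N^2\le k\le T$ is non-empty and the denominator picks up the full $c\log N$, set $T=2C_0L_NN^2$ using the upper bound \eqref{e:mixtor} rather than $T=2L_Nt^N_{\text{mix}}$, so that no lower bound on $t^N_{\text{mix}}$ is needed.
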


\begin{proof}
Since the graph is transitive, condition \eqref{pseudo} is satisfied
and ({\bf B0}) follows from \eqref{08}. On the other hand, assumption
({\bf B1}) is clearly in force by definition of $\ell_N$.  It remains
to check hypothesis ({\bf B2}). Recall the definition of the sequence
$L_N$.  The case $d \geq 3$ follows directly from Lemma~3.1 of
\cite{TW10}, and we focus on the case $d = 2$. Fix $x\in \bb T^d_N$
and $z \not\in B(x, \ell_N)$. If $\Pi$ stands for the canonical
projection from $\mathbb{Z}^2$ to $\mathbb{T}^2_N$ and $\mc P_z$ for
the probability corresponding to the symmetric nearest neighbor
discrete time random walk on $\bb Z^2$,
\begin{equation*}
\mb P_z[\mathbb{H}_{x} < L_N t_{\textnormal{mix}}^N ] \;=\;
\mc P_{z}[\mathbb{H}_{\Pi^{-1}(x)} < L_N t_{\textnormal{mix}}^N ]\;.
\end{equation*}
We may bound the previous probability by
\begin{equation}
\label{e:AMNcopies}
\mc P_{z}[\mathbb{H}_{B(z,N \log^{1/4} N)^c} < L_N t_{\textnormal{mix}}^N]
\;+\; \sum_i \mc P_{z}[ \mathbb{H}_{x_i} < L_N t_{\textnormal{mix}}^N ]\;,
\end{equation}
where the sum is performed over all sites $x_i$ in the pre-image of
$x$ which belong to the ball $B(z,N \log^{1/4} N)$. 

The first term can be bounded using the estimate \eqref{e:mixtor} for
the mixing time and an exponential Doob inequality since each
component of the random walk is a martingale. This argument shows that
the first term is bounded by $4 \exp\{- a \log^{1/4} N\}$ for some
$a>0$. Since there are no more than $C_0\sqrt{\log N}$ terms in the
sum, the second expression in the previous decomposition is bounded
above
\begin{equation*}
C_0\sqrt{\log N} \, \mc P_{0}[ \mathbb{H}_{x} < L_N
t_{\textnormal{mix}}^N ]\;, 
\end{equation*}
where $x$ is a site at distance $\ell_N$ from the origin. Decomposing
this probability according to whether the random walk reached the ball
with radius $N \log^{1/4} N$ before time $C_0 N^2 \log^{1/4} N$ or
not, and recalling the argument employed to bound the first term in
\eqref{e:AMNcopies}, we conclude that the previous expression is
bounded by
\begin{equation*}
C_0\sqrt{\log N} e^{- a \log^{1/4} N} \, \;+\; C_0\sqrt{\log N} \, 
\mc P_{0}[ \mathbb{H}_{x} < \mathbb{H}_{B(0,N \log^{1/4} N)^c} ]
\end{equation*}
for some finite constant $C_0$ and some positive $a$. By
\cite[Proposition 1.6.7]{Law91} and the reversibility of the random walk,
the second term is less than or equal to
\begin{equation*}
\begin{split}
C_0 \sqrt{\log N} \, \Big(1 - \frac{\log \ell_N}{\log( N \log^{1/4}
  N)} + \frac{C_0}{\log^2 N}\Big) \;\le\; C_0 \log^{-1/4} N\;,
\end{split}
\end{equation*}
which proves condition ({\bf B2}). 
\end{proof}

To complete the description of the asymptotic behavior of the trap
model on the discrete torus $\bb T^d_N$, it remains to determine the
time scale $\beta_N$.  Let $v_d$, $d \geq 3$, be the escape
probability of a simple random walk on $\mathbb{Z}^d$, and let
\begin{equation*}
\beta'_N =
\begin{cases}
c^{-1}_{|\bb T^d_N|}\, (2/\pi) \log(N) &\text{$d = 2\,$, }\\
c^{-1}_{|\bb T^d_N|}\, v^{-1}_d &\text{$d \geq 3\,$.}
\end{cases}
\end{equation*}
In view of the definition of $\beta_N$ and of
\cite[Theorem~1.6.6]{Law91}, $\lim_{N\to\infty} \beta_N/\beta'_N =1$.

\subsection{\bf Random $d$-regular graphs.}

In this subsection we consider a sequence of graphs $G_N$ with $N$
vertices satisfying the following three assumptions.

\renewcommand{\theenumi}{{\bf G\arabic{enumi}}}
\renewcommand{\labelenumi}{(\theenumi)}

\begin{enumerate}
\item $G_N$ is $d$-regular for some $d\ge 3$;
\item There is a constant $\alpha > 0$ such that for any vertex $x$ of
  $V_N$, the ball $B(x,\alpha \log N)$ contains at most one cycle;
\item The spectral gap $\lambda_{N}$ of the continuous time random walk
  on $G_N$ is bounded below by some positive constant: $\lambda_N \ge
  \gamma>0$ for all $N\ge 1$.
\end{enumerate}

It follows from \cite[Remark~1.4]{CTW10} that these three hypotheses
hold, with probability approaching $1$ as $N\uparrow\infty$, for a
sequence of random $d$-regular graphs on $N$ vertices. They are also
satisfied by the so-called Lubotzky-Phillips-Sarnak graphs
\cite{LPS88}.

By \cite{SC97} p. 328, under conditions ({\bf G1}) and ({\bf G3}), the
mixing time $t^N_{\rm mix}$ is bounded above by $C_0 \log N$ for some
finite constant $C_0$.

\begin{proposition}
\label{p:dreg}
Let $\{G_N : N \geq 1\}$ be a sequence of random graphs defined on
some probability space $(\Omega, \mc F, \bb P)$ satisfying the
assumptions {\rm ({\bf G1})--({\bf G3})} with a $\bb P$-probability
converging to $1$ as $N\uparrow\infty$. Then, the conditions of
Theorem \ref{t:trans} are fulfilled with $L_N = \log N$ and $\ell_N =
\alpha' \log N$ for some $\alpha'< \min\{\alpha, [2\log
(d-1)]^{-1}\}$, where $\alpha$ is the constant appearing in condition
{\rm ({\bf G2})}.
\end{proposition}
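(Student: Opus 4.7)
The plan is to verify, on the event of $\bb P$-probability tending to $1$ on which (G1)--(G3) hold, the four hypotheses of Theorem~\ref{t:trans}, namely \eqref{B0}, \eqref{BB0}, \eqref{BB1} and the pseudo-transitivity condition \eqref{pseudo}, with the choices $\ell_N = \alpha' \log N$ and $L_N = \log N$ and with $\alpha' < \min\{\alpha, (2\log(d-1))^{-1}\}$.

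I would dispatch the easier three first. By (G1), $G_N$ is $d$-regular, so \eqref{nu} yields $\nu_N(x) = W^N_x/\sum_{y \in V_N} W^N_y$; \eqref{B0} then reduces to a statement about the weight tails alone, immediate from \eqref{08}. For \eqref{BB0}, a $d$-regular graph satisfies $|B(\mf x, 2\ell_N)| \le 1 + d \sum_{k=0}^{2\ell_N-1} (d-1)^k = O((d-1)^{2\ell_N}) = O(N^{2\alpha' \log(d-1)})$, which is $o(N)$ because $\alpha' < (2\log(d-1))^{-1}$. For \eqref{pseudo}, (G2) forces $B(\mf x,\ell_N)$ to contain at most one cycle; a standard cycle-counting estimate (underlying Remark~1.4 of \cite{CTW10}) shows that the fraction of vertices whose $\ell_N$-ball contains any cycle tends to zero, so with probability tending to one both $B(\mf x,\ell_N)$ and $B(\mf y,\ell_N)$ are $d$-regular rooted trees of depth $\ell_N$, and hence canonically isomorphic.

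The main obstacle is \eqref{BB1}. From (G1), (G3) and the Poincar\'e-inequality estimate recalled in \cite{SC97}, p.~328, one obtains $t^N_{\text{mix}} \le C_0 \log N$, so $L_N t^N_{\text{mix}} \le C_0 (\log N)^2$. To bound $\mb P_y[\bb H_{\mf x} \le L_N t^N_{\text{mix}}]$ uniformly over $y \not\in B(\mf x,\ell_N)$, I would exploit that, by (G2), $B(\mf x, \alpha \log N)$ differs from the truncated $d$-regular rooted tree of depth $\alpha \log N$ by at most one edge. The classical birth-and-death computation for the distance to the root on a $d$-regular tree shows that the probability of hitting the root from a vertex at distance $\ell$ before exiting the tree is at most $(d-1)^{-\ell}$ (transience for $d \ge 3$), and this bound transfers, up to a multiplicative constant absorbing the single extra edge, to the random walk on $G_N$ inside $B(\mf x, \alpha \log N)$. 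An excursion decomposition then handles the full time interval $[0, L_N t^N_{\text{mix}}]$: either the trajectory reaches $\mf x$ during its first sojourn in $B(\mf x, \alpha \log N)$---contributing at most $(d-1)^{-\ell_N} = N^{-\alpha' \log(d-1)}$---or it must first leave $B(\mf x, \alpha \log N)$, traverse the shell of width $\alpha \log N - \ell_N$, and return, which costs $\Omega(\log N)$ steps per excursion and therefore admits at most $O(\log N)$ excursions within time $O((\log N)^2)$, each with hitting probability at most $(d-1)^{-\alpha \log N}$. Summing,
\begin{equation*}
\sup_{y \not\in B(\mf x, \ell_N)} \mb P_y\big[\bb H_{\mf x} \le L_N t^N_{\text{mix}}\big] \;\le\; N^{-\alpha' \log(d-1)} \;+\; C (\log N)\, N^{-\alpha \log(d-1)} \;=\; o(1).
\end{equation*}
Taking expectations and using that (G1)--(G3) hold with $\bb P$-probability tending to $1$ yields \eqref{BB1}. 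With all four hypotheses verified, Theorem~\ref{t:trans} applies directly and delivers the conclusion.
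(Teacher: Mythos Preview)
Your verification of \eqref{B0}, \eqref{BB0} and \eqref{pseudo} matches the paper's proof essentially line for line: regularity reduces \eqref{B0} to \eqref{08}; the crude ball-volume bound $|B(\mf x,2\ell_N)|=O((d-1)^{2\ell_N})$ gives \eqref{BB0}; and for \eqref{pseudo} both you and the paper argue that $B(\mf x,\ell_N)$ is a tree with probability $1-o(1)$ (the paper cites \cite[Lemma~6.1]{CTW10} with $\Delta=\ell_N$, giving the explicit rate $(d-1)^{-(\alpha-\alpha')\log N}$), after which two depth-$\ell_N$ $d$-regular rooted trees are trivially isomorphic.

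The only real divergence is \eqref{BB1}. The paper does not argue this from scratch: it invokes the mixing-time bound $t^N_{\rm mix}\le C_0\log N$ (from \cite{SC97}) and then cites \cite[Lemma~3.4]{CTW10} with $r=0$, $s=\alpha'\log N$ to get $\sup_{z\notin B(\mf x,\ell_N)}\mb P_z[\HH_{\mf x}\le C_0(\log N)^2]\le C_0 N^{-a}$ directly. Your explicit excursion argument is a reasonable substitute, but as written it has two soft spots. First, the assertion that each excursion ``costs $\Omega(\log N)$ steps'' is not justified: an entrance into $B(\mf x,\alpha\log N)$ from outside can bounce back out in two steps. The fix is painless---simply bound the number of entrances by the total number of steps $L_N t^N_{\rm mix}=O((\log N)^2)$; the resulting bound $(\log N)^2\, (d-1)^{-\alpha\log N}$ still vanishes. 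Second, the claim that the birth--death estimate ``transfers, up to a multiplicative constant absorbing the single extra edge'' needs an argument: adding an edge to a tree can \emph{increase} the harmonic hitting potential at interior vertices, so Rayleigh monotonicity does not immediately give what you want. A clean way around this is to run your excursion decomposition on the smaller ball $B(\mf x,\ell_N)$, which you have already shown is a genuine tree with probability $1-o(1)$; starting from $y\notin B(\mf x,\ell_N)$, each entrance at $\partial B(\mf x,\ell_N)$ hits $\mf x$ before exiting with probability at most $C(d-1)^{-\ell_N}$, and there are at most $O((\log N)^2)$ entrances, yielding $O((\log N)^2 N^{-\alpha'\log(d-1)})=o(1)$.
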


\begin{proof}
Condition ({\bf B0}) follows from assumption ({\bf G1}) and
\eqref{08}. The rest of the proof is based on estimates obtained in
\cite{CTW10}.  

By \cite[Lemma~6.1]{CTW10} with $\Delta = \ell_N$, the probability
that a ball $B(x, \ell_N)$ is not a tree is bounded by
$(d-1)^{-(\alpha - \alpha') \log N}$.  Let $\Sigma_N$ be the event
\begin{equation}
\label{e:alltree}
\Sigma_N \;=\; \big\{
\text{$B(x^N_1,\ell_N)$ and $B(x^N_2,\ell_N)$ are disjoint trees} \big\}\;.
\end{equation}
We claim that $\bb P[\Sigma_N]$ converges to $1$ as
$N\uparrow\infty$. Indeed, if $\tilde \Sigma_N$ stands for the event
that $B(x^N_1,\ell_N)$, $B(x^N_2,\ell_N)$ are trees, in view of the
estimate of the previous paragraph, $\bb P[\tilde \Sigma_N^c]$ is
bounded by $2 (d-1)^{-(\alpha - \alpha') \log N}$ which vanishes as
$N\uparrow\infty$. On the other hand, since $|B(x_1,r)| \le 4
(d-1)^{r}$ for any ball in a $d$-regular graph and since $x^N_1$,
$x^N_2$ are uniformly distributed,
\begin{equation*}
\begin{split}
& \bb P \big[ B(x_1,\ell_N) \cap B(x_2,\ell_N)
\not = \varnothing \big] \; \le\; 4 \, 
\frac{(d-1)^{2\ell_N}}{N}\;\cdot
\end{split}
\end{equation*}
As $\alpha'< [2\log (d-1)]^{-1}$, this expression vanishes as
$N\uparrow\infty$. This proves the claim and assumption
\eqref{pseudo}, which clearly follows from the claim.  Condition
({\bf B1}) is also in force because $|B(x_1,2\ell_N)| \le 4 (d-1)^{2
  \ell_N}$.

It remains to examine the escape probability appearing in condition
({\bf B2}).  It follows from the bound for the mixing time presented
just before the statement of the proposition and from our choice of
the sequence $L_N$ that
\begin{equation*}
\mb P_z \big[ \mathbb{H}_{x} < L_N t_{\textnormal{mix}}^N \big ]
\;\le\; 
\mb P_z \big[ \mathbb{H}_{x} < C_0 (\log N)^2 \big ]\;.
\end{equation*}
By \cite[Lemma~3.4]{CTW10} with $r = 0$ and $s = \alpha' \log N$, the
previous expression for $z \not\in B(x, \ell_N)$, is bounded by $C_0
N^{-a}$ for some finite constant $C_0$ and some positive $a>0$.
This concludes the proof of the proposition.
\end{proof}

We conclude this section computing the scaling factor $\beta_N$ in the
context of graphs satisfying assumptions ({\bf G1})--({\bf G3}).  On
the event \eqref{e:alltree}, which has asymptotic probability equal to
one, $B(x_1, \ell_N)$ is a $d$-regular tree so that
\begin{equation*}
v_{\ell_N}(x_1) \;=\; \frac{d-2}{d-1} \Big(\frac 1{1- (d-1)^{-\ell_N}} \Big)\;.
\end{equation*}
In particular, $\lim_{N\to\infty} \beta_N c_N =  (d-1)/(d-2)$. 

\section{Graphs with asymptotically random conductances} 
\label{sec11}

We prove in this section Theorem \ref{t:random}. The proof follows the
one of Theorem~\ref{t:trans}. However, the absence of regularity of
the graph requires some extra effort in establishing ({\bf A2}).

Recall the coupling $\mc Q_N$ defined in \eqref{e:c4} between the
random graph $G_N$ and the sequence of i.i.d. random vectors
$\{(D_j,E_j) : j\ge 1\}$. We extend this coupling $\mc Q_N$ to a
coupling $\mc Q$ between all random graphs $G_N$ and the sequence of
i.i.d. random vectors $\{(D_j,E_j) : j\ge 1\}$ using $\mc Q_N$ as
the conditional probability:
\begin{equation*}
\mc Q \big[G_N = G \big| \{ (D_j, E_j) : j \geq 1\} \big] \;=\;
\mc Q_N \big[G_N = G \big| \{ (D_j, E_j) : j \geq 1\}\big]\;,
\end{equation*}
with the further condition that the graphs $G_N$, $N\ge 1$, are
conditionally independent, given $\{ (D_j, E_j) : j \geq 1\}$.
Include in the probability space just defined the random measure
$\zeta$ introduced in \eqref{09} which is associated to the marks of a
Poisson point process independent from the variables $(D_j,E_j)$ and
from the random graphs $G_N$. The probability measure on this new
space is still denoted by $\mc Q$.

Recall the definition of the random weights $\hat\tau^N_j$, $1\le j\le
\bb V_N$, introduced in Section \ref{sec1}. Since $\{\hat\tau^N_j :
1\le j\le \bb V_N\}$ has the same distribution as $\{W^N_{j} : 1\le j
\le |V_N|\}$, we may replace the latter random weights by the former
and assume that the random walk $X^N_t$ evolves among random traps
with depth $\tau^N_j$ instead of $W^N_{x_j}$.

Since $w_j$ is a.s. summable, since by \eqref{e:c4} $D_1/E_1$ has
finite $\mc Q$-expectation and since the sequences $\{w_j\}$ and
$\{(D_j,E_j)\}$ are independent,
\begin{equation}
\label{e:wDE}
\sum_{j \geq 1} w_j \, \frac{D_j}{E_j} \quad \text{is $\mc Q$-almost 
surely finite}\;.
\end{equation}
By the strong law of large numbers, almost surely
\begin{equation}
\label{lln}
\frac 1n \sum_{j = 1}^n D_j / E_j \;\leq\; C_1 
\end{equation}
for all large enough $n$, where $C_1 = 2 E_{\mc Q} [D_1 / E_1]$.

By hypotheses \eqref{BB0}--\eqref{e:c4}, there exists an increasing
sequence $M_N\uparrow\infty$ such that
\begin{equation*}
\begin{split}
& \lim_{N\to\infty} M_N^2 \, E_{\mc Q} \Big[ \frac{|B(\mf x,
  2\ell_N)|}{\bb V_N} \Big] \;=\; 0\;, \quad 
\lim_{N\to\infty} M_N^3 \, 2^{-L_N} \;=\; 0\;,\\ 
&\quad \lim_{N\to\infty} \mc Q \Big [\, 
\max_{1\le j\le M_N} \big \vert \, [v_{\ell}(\mf x_j)]^{-1} 
- E_j^{-1} \big\vert > M_N^{-2} \Big] \;=\; 0 \;, \\
&\qquad \lim_{N\to\infty} \mc Q \Big [\, 
\bigcup_{j=1}^{M_N}  \{ \deg(\mf x_j) \not = D_j \} \Big] \;=\; 0 \;, \\
& \qquad\quad 
\lim_{N\to\infty} M_N^5 \, E_{\mc Q} \Big[ 
\sup_{\substack{y \not \in B(\mf x, \ell_N)}} 
\mb P_y \big[\bb H_{\mf x} \leq L_Nt_{\text{mix}} \big] \, \Big] 
\;=\; 0\;\;.
\end{split}
\end{equation*}
As before, in the places where the vertices of the graph appear, as in
the definition of the set $\Sigma^1_N$, the sequence $M_N$ obtained
above has to be replaced by $\min\{M_N, \bb V_N\}$, where $\bb V_N$
stands for the number of vertices of the random graph $G_N$.

Let $\Sigma_N^j$, $1\le j\le 4$ be the events 
\begin{equation*}
\begin{split}
&\Sigma_N^1 \;=\; \bigcap_{1\le i\not = j\le M_N} 
\big\{ B(x^N_i, \ell_N) \cap B(x^N_j, \ell_N) = \varnothing
\big\}\;, \\
&\quad \Sigma_N^2 \;=\; \Big \{\, 
\max_{1\le j\le M_N} \big \vert \, [v_{\ell}(x^N_j)]^{-1} 
- E_j^{-1} \big\vert \le M_N^{-2} \Big\}, \\
&\qquad \Sigma_N^3 \;=\; \bigcap_{j=1}^{M_N}
\{ \deg(x^N_j) = D_j \} \;,  \\
&\quad\qquad \Sigma_N^4 \;=\;  \Big\{ M^3_N \max_{1\le j\le M_N}
\sup_{\substack{y \not \in B(x^N_j, \ell_N)}} 
\mb P_y \big[\bb H_{x^N_j} \leq L_N t_{\text{mix}} \big] 
\le M^{-1}_N \Big\}\;. 
\end{split}
\end{equation*}
Similarly to what was done in the proof of Theorem~\ref{t:trans}, we
can show that these events have probability asymptotically equal to
one. In the places where the vertices of the graph appear, as in the
definition of the set $\Sigma^1_N$, the sequence $M_N$ obtained above
has to be replaced by $\min\{M_N, \bb V_N\}$, where $\bb V_N$ stands
for the number of vertices of the random graph $G_N$.

By \eqref{08}, we may replace the sequence $M_N$ by a possibly random
increasing sequence $M'_N \le \min\{M_N,\bb V_N\}$,
$M'_N\uparrow\infty$ $\mc Q$-a.s., still denoted by $M_N$, for which
all the previous estimates hold and such that for all $N\ge 1$,
\begin{equation}
\label{e:ctau}
\sum_{j\ge 1} |c_{\bb V} \tau_j^N - w_j| \;\leq\;  M_N^{-2} \;.
\end{equation}
By hypothesis ({\bf B0}), even though the sequence $M_N$ is random,
the expectation $\bb E[\nu_N(\{x^N_1, \dots, x^N_{M_N}\}^c)]$ vanishes
as $N\uparrow\infty$.  Let $A_N = \{x^N_1, \dots, x^N_{M_N}\}$. As in
the proof of Theorem~\ref{t:trans}, presented in the previous
sections, using again hypothesis ({\bf B0}) we construct a set
$B_N=\{x^N_1, \dots, x^N_{I_N}\}$, $|B_N| =I_N$, and a sequence
$\epsilon_N \downarrow 0$ for which
\begin{equation*}
\lim_{N\to\infty} \mc Q \Big[ \nu_N(B^c_N) + I_N \nu_N(A^c_N) \ge 
\epsilon_N \, \Big] \;=\; 0\;.
\end{equation*}
Let $\Sigma_N^5 = \{ \nu_N(B^c_N) + I_N \nu_N(A^c_N) \le \epsilon_N
\}$.

To show that the pair $(c_{\bb V} \tau^N, \Psi_N(X^N_{t \beta_N}))$
converges weakly to $(\mb w, K_t)$, it is enough to show that any
subsequence $\{N_j : j\ge 1\}$ possesses a sub-subsequence $\mf n$
such that $(c_{\mf n} \tau^{\mf n},\Psi_{\mf n}(X^{\mf n}_{t
  \beta_{\mf n}}) )$ converges to $(\mb w, K_t)$. Fix, therefore, a
subsequence $N_j$. By \eqref{08}, the ordered sequence
$(c_{N_j}\tau^{N_j}_1, \dots, c_{N_j}\tau^{N_j}_{\bb V})$ converges
almost surely in $L^1(\bb N)$ to $\mb w =(w_1, w_2, \dots)$. This
proves the weak convergence of the first coordinate.  Let
$\Sigma_{N_j} = \cap_{1\le k\le 5} \Sigma^k_{N_j}$. There exists a
sub-subsequence, denoted by $\mf n$, for which
\begin{equation*}
\mc Q \Big[ \bigcup_{\mf n_0\ge 1} \bigcap _{\mf n \ge \mf n_0}
\Sigma_{\mf n} \Big] 
\;=\; 1\;.
\end{equation*}

We affirm that all assumptions of Theorem \ref{s13} hold on the event
$\cup_{\mf n_0\ge 1} \cap _{\mf n \ge \mf n_0} \Sigma_{\mf n}$
intersected with the ones in \eqref{e:wDE}, \eqref{lln} and
\eqref{e:ctau}. Indeed, condition ({\bf A1}) follows from the
definition of the set $\Sigma^1_{\mf n}$.  Similarly to the proof of
Theorem~\ref{t:trans}, condition ({\bf A3}) follows from the
definitions of the sequence $M_{\mf n}$ and the set $\Sigma^4_{\mf
  n}$.  Conditions \eqref{e:c2}, \eqref{14} follow from the definition
of the set $\Sigma^5_{\mf n}$.

We turn to condition ({\bf A2}). Recall that $\beta_{\mf n} = c_{\mf
  n}^{-1}$.  Fix a sequence $J_{\mf n} \uparrow\infty$ such that
$J_{\mf n} \le M_{\mf n}$, and let $B_{\mf n} = \{x^{\mf n}_1, \dots,
x^{\mf n}_{J_{\mf n}}\}$.  Since we replaced the weights $W^{\mf
  n}_{x^{\mf n}_j}$ by $\tau^{\mf n}_j$, the first expectation
appearing in this hypothesis can be rewritten as
\begin{equation}
\label{e:EnuB}
\frac{\sum_{1\le j\le J_{\mf n}} [c_{{\mf n}} \tau^{\mf n}_j]^2
\frac{\deg(x^{\mf n}_j)}{v_\ell(x^{\mf n}_j)}}
{\sum_{1\le j\le J_{\mf n}} c_{{\mf n}} \tau^{\mf n}_j \deg(x^{\mf n}_j)}\;\cdot
\end{equation}
By definition of the set $\Sigma^3_{\mf n}$ we may replace $\deg(x^{\mf
  n}_j)$ by $D_j$. Since $\tau^{\mf n}_j$ is decreasing, by definition
of the set $\Sigma^2_N$ the numerator is bounded by
\begin{equation*}
c_{{\mf n}} \tau^{\mf n}_1 \sum_{j=1}^{J_{\mf n}} c_{{\mf n}}
\tau^{\mf n}_j \, \frac{D_j}{E_j} \;+\; 
\frac{c_{{\mf n}} \tau^{\mf n}_1}{M^2_{\mf n}} 
\sum_{j=1}^{J_{\mf n}} c_{{\mf n}} \tau^{\mf n}_j \, D_j\;.
\end{equation*}
The second term divided by the denominator in \eqref{e:EnuB} is less
than or equal to $c_{{\mf n}} \tau^{\mf n}_1M^{-2}_{\mf n}$ which goes
to 0 as $\mf n \to \infty$ in view of \eqref{e:ctau}. Also, by
\eqref{e:ctau}, the first term is bounded by
\begin{equation*}
c_{{\mf n}} \tau^{\mf n}_1 \sum_{j=1}^{J_{\mf n}} w_j \,
\frac{D_j}{E_j} \;+\; c_{{\mf n}} \tau^{\mf n}_1 \frac 1{M_{\mf n}}
\max_{1\le j\le J_{\mf n}} \frac{D_j}{E_j}\;.
\end{equation*}
Since the denominator in \eqref{e:EnuB} is bounded below by $c_{{\mf
    n}} \tau^{\mf n}_1 \, D_1 \ge c_{{\mf n}} \tau^{\mf n}_1$, the
first condition in ({\bf A2}) follows from \eqref{e:wDE}, \eqref{lln}.

The second condition of assumption ({\bf A2}) can be written as
\begin{equation*}
\frac{1}{J_{\mf n}} \frac{\sum_{1\le j\le J_{\mf n}} 
v_\ell(x^{\mf n}_j) \deg(x^{\mf n}_j)}{\sum_{1\le j\le J_{\mf n}}
c_{\mf n} \tau^{\mf n}_j \deg(x^{\mf n}_j)} \;\cdot
\end{equation*}
By definition of the set $\Sigma^3_{\mf n}$ we may replace
$\deg(x^{\mf n}_j)$ by $D_j$.  The sum in the denominator is bounded
below by $c_{\mf n} \tau^{\mf n}_1 D_1 \ge c_{\mf n} \tau^{\mf n}_1$,
which is uniformly bounded.  Since the escape probability is bounded
by one and since by \eqref{e:c4} $E_j$ is bounded by one, the
numerator is less than or equal to $\sum_{1\le j\le J_{\mf n}}
(D_j/E_j)$, whose average by \eqref{lln} is bounded.

It remains to establish \eqref{04} with $\gamma_N = 1$, $Z_j =
w_j/E_j$ and $u_j = E_j D_j$.  The convergence of the first term
follows from \eqref{e:ctau}, the definition of $\Sigma^2_{\mf n}$ and
$\Sigma^3_{\mf n}$ and the fact that the variables $E_j$ are bounded
by one. The second part of \eqref{04} amounts to estimate
\begin{equation*}
\sum_{j = m}^{M_{\mf n}} c_{\mf n} \tau_j^{\mf n} \deg (x^{\mf n}_j)
\;=\; \sum_{j = m}^{M_{\mf n}} c_{\mf n} \tau_j^{\mf n} D_j
\;\le\; \sum_{j = m}^{M_{\mf n}} w_j\, (D_j/E_j) \;+\; 
\frac 1{M^2_{\mf n}}\max_{1\le j \le M_{\mf n}}  (D_j/E_j) \;,
\end{equation*}
where the identity follows from the definition of $\Sigma^3_{\mf n}$
and the inequality from \eqref{e:ctau} and the boundedness of
$E_j$. The first term on the right hand side vanishes in view of
\eqref{e:wDE} and the second one by \eqref{lln}. This concludes the
proof of the Theorem.

\section{Supercritical Erd\"os-R\'enyi random graphs}
\label{sec10}

We show in this section that super-critical Erd\"os-R\'eny random
graphs satisfy the assumptions of Theorem \ref{t:random}. Let $\ms
V_N$ be the set of vertices $\ms V_N = \{1, \dots, N\}$. For $\lambda
> 1$ fixed, let $\{\xi_{x,y} : x,y \in \ms V_N\}$ be
i.i.d.~Bernoulli($\lambda/N$) random variables constructed in a
probability space $(\Omega, \mathcal{A}, \mathbb{P})$. The
Erd\"os-R\'enyi random graph is defined as $\ms G_N = (\ms V_N, \ms
E_N)$, where $\ms E_N$ is the random set of edges given by $\{\{x,y\};
\xi_{x,y} = 1\}$.  Throughout this section, $c_j$, $C_j$, $j\ge 0$,
represent positive constants depending on $\lambda$ and sometimes on
further parameters, the first ones being tipically small and the last
ones large. Next result can be found in \cite[Theorem~2.3.2]{Dur10}.

\begin{theorem}
\label{e:unique}
There is a constant $c_{0}$ such that with $\mathbb{P}$-probability
converging to one as $N$ tends to infinity, there is a unique
component $\mathcal{C}_{\text{max}}$ in $(\ms V_N, \ms E_N)$ with
$|\mathcal{C}_{\text{max}}| > c_{0} \log N$. Moreover, there exists
$0<\mf v_\lambda<1$ such that
\begin{equation*}
\lim_{N\to\infty} \mathbb{P} \Big[ \, \Big| 
\frac{|\mathcal{C}_{\text{max}}|}N 
- \mf v_\lambda \,\Big| \,>\, \epsilon \Big] \;=\; 0\;.
\end{equation*}
for all $\epsilon >0$.
\end{theorem}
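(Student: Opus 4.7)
The plan is to reduce the analysis to a classical Galton--Watson branching process with $\text{Poisson}(\lambda)$ offspring, which is supercritical because $\lambda > 1$. First, for a fixed vertex $v \in \ms V_N$ I would explore the component $\mathcal{C}(v)$ in breadth-first order, noting that at each query the number of newly revealed neighbors of a vertex is $\text{Bin}(N - n, \lambda/N)$, where $n$ counts the vertices already seen. While $n = o(N)$ this distribution is close in total variation to $\text{Poisson}(\lambda)$, so the exploration is sandwiched between two Galton--Watson processes with offspring distributions $\text{Poisson}(\lambda \pm \epsilon)$. Letting $\rho(\lambda) \in (0,1)$ denote the survival probability of $\text{Poisson}(\lambda)$, i.e., the unique positive solution of $1 - \rho = e^{-\lambda \rho}$, this identifies the candidate density as $\mf v_\lambda := \rho(\lambda)$ and, by standard large deviation bounds for the exploration walk, yields constants $c_0, c_1, \gamma > 0$ such that $\mathbb{P}[c_0 \log N \le |\mathcal{C}(v)| \le c_1 N] \le N^{-\gamma}$ for every fixed $v$.

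A union bound over $v \in \ms V_N$ then shows that with probability $1 - o(1)$ every connected component has size either at most $c_0 \log N$ or at least $c_1 N$. To prove that only one such large component exists, I would use a sprinkling argument: write $\lambda/N$ as the probability of the union of two independent edge indicators with parameters $\lambda'/N$ and $\delta/N$ (with $\lambda' + \delta - \lambda'\delta/N = \lambda$), apply the previous analysis to the $\lambda'$-graph to locate all clusters of size at least $c_1 N$, and then observe that the probability that none of the additional $\delta/N$-edges joins any two such clusters is at most $(1 - \delta/N)^{(c_1 N)^2}$, which tends to zero. This forces the giant component to be unique with probability tending to one, establishing the first assertion of the theorem.

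For the asymptotic density, let $Z_N = \#\{v \in \ms V_N : |\mathcal{C}(v)| \ge c_0 \log N\}$. The previous step identifies $Z_N$ with $|\mathcal{C}_{\text{max}}|$ with probability $1 - o(1)$, while the branching-process approximation gives $\mathbb{E}[Z_N] = \mf v_\lambda N (1 + o(1))$. Running two independent explorations from distinct vertices $u, v$ and noting that with high probability they only interact on a set of size $o(N)$ yields $\mathrm{Var}(Z_N) = o(N^2)$, so Chebyshev's inequality furnishes the claimed concentration $|\mathcal{C}_{\text{max}}|/N \to \mf v_\lambda$ in probability.

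The subtle point is the uniqueness step: the sprinkling coupling must be set up with care so that the cluster-size estimates established for the $\lambda'$-graph transfer to the original model, and one must control the dependence between the macroscopic clusters produced by the first layer and the additional edges supplied by the second. An alternative route, avoiding sprinkling entirely, is a direct second-moment computation on the number of ordered pairs of vertices lying in distinct macroscopic components; this is technically cleaner but requires delicate bookkeeping on the joint exploration of two cluster trees.
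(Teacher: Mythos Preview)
The paper does not prove this theorem; it simply states it and cites \cite[Theorem~2.3.2]{Dur10} (Durrett's \emph{Random graph dynamics}) for the proof. Your sketch is essentially the standard textbook argument---branching-process approximation of the exploration, a gap in the component-size distribution via large deviations, sprinkling for uniqueness, and a second-moment bound for the density---which is precisely the route taken in Durrett's book. So your proposal is correct and aligned with the cited source, though strictly speaking there is nothing in the paper itself to compare it against.
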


We will be interested in analyzing the trap model in
$\mathcal{C}_{\text{max}}$, providing another interesting example for
which our machinery can be applied. For the sake of simplicity we
shall assume that the common distribution of the traps $\{W_j^N : j\ge
1\}$ is $\alpha$-stable. More precisely, recall the definition of the
variables $\hat \tau^N_i$, $1\le i\le \bb V$, introduced in \eqref{52}
with $\bb V = N$ and $F(t)=t$. We assume in this section that
$W^N_i=\hat \tau^N_i$, $1\le i\le N$.

Let $V_N = \mathcal{C}_{\text{max}}$ be the random set of vertices and
let $E_N = \{ \{x,y\}\subset V_N : \{x,y\}\in \ms E_N\}$ be the random
set of edges of the random graph $G_N$.  In contrast with the previous
examples presented in Section \ref{sec7}, the number of vertices of
the random graph $G_N$ is also random. The weights are distributed as
follows. Given $V_N$, re-enumerate the weights $W^N_{j}$, $1\le j \le
|V_N|$, in decreasing order and denote by $\hat W^N_{j}$ the new
sequence, so that $\hat W^N_{j} \ge \hat W^N_{j+1}$, $1\le j < |V_N|$,
$\hat W^N_{\sigma(j)}=W^N_j$ for some permutation $\sigma$ of $V_N$.
Randomly enumerate the vertices of $V_N$, obtaining a vector $(x^N_1,
\dots, x^N_{|V_N|})$, and set $W^N_{x^N_j} = \hat W^N_j$. Given this
random vertex-weighted graph, we examine the continuous-time random
walk $X^N_t$ on $G_N$ with generator given by \eqref{f05}.

Note that to define the random weights $W^N_j = \hat\tau^N_j$ we
divided the interval $[0,1]$ in $N$ sub-intervals instead of dividing
it in $|V_N|$ intervals. In particular, in contrast with the examples
of Section \ref{sec7}, $N^{-1/\alpha} W^N_{x^N_1}$ does not converge
to a Fr\'echet distribution, but so does $\mf v_\lambda^{-1/\alpha}
N^{-1/\alpha} W^N_{x^N_1}$, where $\mf v_\lambda$ is given by Theorem
\ref{e:unique}.



In the rest of this section, we prove that the assumptions of Theorem
\ref{t:random} are fulfilled. By Theorem \ref{e:unique}, the number of
vertices converges in probability to $+\infty$. To establish
\eqref{B0}, fix a sequence $J_N\uparrow\infty$ and denote by $\ms
W^N_1, \dots, \ms W^N_N$ the sequence $W^N_1, \dots, W^N_{N}$
enumerated in decreasing order. Note that $\ms W^N_j \ge \hat W^N_j$,
$1\le j\le |V_N|$.  By \eqref{08} and \eqref{aa1}, for every $\epsilon
>0$, 
\begin{equation*}
\lim_{N\to\infty} \bb P \Big[ \sum_{j\ge 1} 
| N^{-1/\alpha} \ms W^N_j - w_j| \ge \epsilon \Big] \;=\; 0\;.
\end{equation*}
Since $\sum_{j\ge J_N} w_j$ vanishes almost surely as
$N\uparrow\infty$, if $\Sigma^0_N$ stands for the event $\sum_{j\ge
  J_N} N^{-1/\alpha} \ms W^N_j \le 1$,
\begin{equation*}
\lim_{N\to\infty} \bb P \big[ \Sigma^0_N \big] \;=\; 1\;.
\end{equation*}
Denote by $\Sigma^1_N$ the event $\{|V_N - \mf v_\lambda N |\le
\epsilon N\}$ for some $0<\epsilon < \min\{\mf v_\lambda , 1 - \mf
v_\lambda\}$. By Theorem \ref{e:unique}, $\bb P[\Sigma^1_N ] \to 1$.
In conclusion, to prove \eqref{B0} we need to show that
\begin{equation*}
\lim_{N\to\infty} \bb E \big[ \nu_N \big(\{x_1, \dots, x_{\min\{J_N,
  |V_N|\}}\}^c \big) \, \mb 1\{\Sigma^0_N \cap \Sigma^1_N\} \big] 
\;=\; 0\;.
\end{equation*}
By definition of $\nu_N$, and since all vertices in $V_N$ have degree
at least equal to one,
\begin{equation*}
\nu_N \big(\{x_1, \dots, x_{\min\{J_N, |V_N|\}}\}^c\big)
\;\le\; \frac{\sum_{j= J_N+1}^{|V_N|} W_{x_j}^N \deg
  (x_j)}{W_{x_1}^N}\; \cdot
\end{equation*}
Since $\ms W^N_j \ge \hat W^N_j$, $1\le j\le |V_N|$, 
\begin{equation*}
\sum_{j= J_N+1}^{|V_N|} W_{x_j}^N \deg (x_j) \;\le\;
\sum_{j= J_N+1}^{|V_N|} \ms W_{j}^N \deg (x_j) \;\le\;
\sum_{j= J_N+1}^{N} \ms W_{j}^N \deg (x_j)\;,
\end{equation*}
if $x_{|V_N|+1}, \dots, x_{N}$ represents a random enumeration of the
vertices of $\ms V_N$ which do not belong to the largest component.
On the set $\Sigma^1_N$, $W_{x_1}^N \ge \max_{1\le k\le c_\lambda N}
W^N_k$, where $c_\lambda = \mf v_\lambda - \epsilon$. This latter
variable as well as the variables $\ms W_{j}^N$ depend only on the
Poisson point process defined at the beginning of Section 8. Hence if
we denote by $\mf W$ the $\sigma$-algebra generated by this process
and let $\Sigma^{0,1}_N = \Sigma^0_N\cap \Sigma^1_N$, we obtain that
\begin{equation*}
\begin{split}
& \bb E \Big[ \frac{\sum_{j= J_N+1}^{N} \ms W_{j}^N \deg
  (x_j)}{W_{x_1}^N} \, \mb 1\{\Sigma^{0,1}_N \} \Big] \;\le\; 
\bb E \Big[ \frac{\sum_{j= J_N+1}^{N} \ms W_{j}^N \deg
  (x_j)}{\max_{1\le k\le c_\lambda N } W_{k}^N}
\, \mb 1\{\Sigma^{0,1}_N \}\Big] \\
&\qquad
\le\; \bb E \Big[ \frac {\mb 1\{\Sigma^0_N \}}
{\max_{1\le k\le c_\lambda N } W_{k}^N}
\sum_{j= J_N+1}^{N} \ms W_{j}^N \,
\bb E \big[\deg  (x_j) \, \mb 1\{\Sigma^1_N \} \,
\big|\, \mf W \big]\, \Big] \; . 
\end{split}
\end{equation*}

We first estimate the conditional expectation and then the remaining
expression. Since the law of the graph $\ms G_N$ is independent of the
$\sigma$-algebra $\mf W$, the previous conditional expectation is
equal to $\bb E [\deg (x_j) \, \mb 1\{\Sigma^1_N \}]$. By construction
if $j\le |V_N|$, $\deg (x_j)$ has the same distribution as $\deg(x_k)$
for $1\le k\le |V_N|$, with a similar fact if $j> |V_N|$. Therefore,
for a fixed $j$, the previous expectation is bounded by
\begin{equation*}
\sum_{\ell\le j-1} \bb E \Big[\mb 1\{|V_N|=\ell \} \, 
\frac 1{N-\ell} \sum_{y\not \in V_N} \deg  (y) \, \Big] \;+\;
\sum_{\ell\ge j} \bb E \Big[\mb 1\{|V_N|=\ell \} \, 
\frac 1{\ell} \sum_{y\in V_N} \deg  (y) \, \Big]\;,
\end{equation*}
where the sum is carried over all $\ell$ such that $|\ell - \mf
v_\lambda N|\le \epsilon N$. Estimating the denominators by the worst
case, we get that the sum is less than or equal to 
\begin{equation*}
\frac 1{\min\{\mf v_\lambda - \epsilon, 1 - \epsilon - \mf v_\lambda\}}
\, \bb E \Big[ \frac 1{N} \sum_{y=1}^N \deg  (y) \, \Big] \;.
\end{equation*}
This expectation is equal to $\lambda$.

It remains to estimate the expectation involving the weights. On the
set $\Sigma^0_N$, $\sum_{J_N+1\le j\le N} \ms W_{j}^N \le
N^{1/\alpha}$. On the other hand, using the notation introduced in
\eqref{09}, $\max_{1\le k\le c_\lambda N } N^{-1/\alpha} W_{k}^N \ge
w(\lambda)$, where $w(\lambda) = \max_i \hat w_i$, and where the
maximum is carried over all indices $i$ such that $z_i\le
c_\lambda$. Hence,
\begin{equation*}
\bb E \Big[ \frac {\mb 1\{\Sigma^0_N \}}
{\max_{1\le k\le c_\lambda N } W_{k}^N}
\sum_{j= J_N+1}^{N} \ms W_{j}^N \, \Big] \;\le\;
\bb E \Big[ \frac {1} {w(\lambda)} \Big]\;.
\end{equation*}
Since $w' = w(\lambda)/c_\lambda^{1/\alpha}$ has a Fr\'echet
distribution, $P(w' \leq t) = \exp\{-1/t^\alpha\}$, this expectation
is finite, which proves condition \eqref{B0}.

The results of this section should still hold if we require the
variables $W^N_j$ to belong to the domain of attraction of an
$\alpha$-stable law and to satisfy the bound  
\[
\limsup_{N \to \infty} \bb E\Big[\big(c_N \sup_{1 \leq i \leq N} 
W^N_i\big)^{-1}\Big] <+\infty\;,
\]
where $c_N$ has been introduced in \eqref{aa1}.

To understand the asymptotic law of the escape probabilities, we need
to introduce a related branching process.  Let $\mathcal{T}$ be the
random tree obtained by the Galton-Watson process with offspring
distribution Poisson($\lambda$) and denote its law by
$\mathcal{P}$. Since $\lambda$ is assumed to be greater than one, the
event that $\mathcal{T}$ is infinite has positive
$\mathcal{P}$-probability, \cite[Theorem~2.1.4]{Dur10}. We denote by
$\varnothing$ the root of $\mathcal{T}$.

We first show that the neighborhood of a random point in the
Erd\"os-R\'enyi graph looks like the neighborhood of $\varnothing$ in
$\mathcal{T}$. This is made precise as follows. We write $(x,G)$ for a
graph with a marked vertex $x$. We say that $(x,G)$ is isometric to
$(x',G')$ if there exists an isometry between $G$ and $G'$, sending
$x$ to $x'$. As an abuse of notation, we consider $A \subseteq \ms
V_N$ both as a set of vertices and as the corresponding induced
subgraph of $\ms G_N$.

\begin{proposition}
\label{l:isom}
Let $0<\gamma < (3\log \lambda)^{-1}$.  There exist constants $C_1$
and $N_0=N_0(\lambda, \gamma)$ such that given a random point $z \in
\ms V_N$, we can find a coupling $Q_N$ between the random graph $\ms
G_N$ under $\mathbb{P}$ and the Galton-Watson tree $\mathcal{T}$ under
$\mathcal{P}$ such that for all $N\ge N_0$,
\begin{equation*}
Q_N \Big[ \big(z,B(z, \gamma \log N) \big) 
\text{ is isometric to } \big(\varnothing, B(\varnothing, 
\gamma \log N) \big) \Big] \geq 1 - C_1 N^{3\gamma\log \lambda-1}\;.
\end{equation*}
\end{proposition}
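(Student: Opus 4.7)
The plan is to build the coupling $Q_N$ by performing a joint breadth-first exploration of $B(z,R)$ in $\ms G_N$ and of $B(\varnothing,R)$ in $\mathcal{T}$, where $R := \lfloor \gamma \log N \rfloor$, and to produce an isometry generation by generation whenever two identifiable types of error fail to occur throughout the process.

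First I would set up the step-by-step coupling. When visiting a vertex $x$ in $\ms G_N$ whose parent in the BFS tree has already been revealed, with $k$ vertices discovered so far, the number of as-yet-unseen neighbors of $x$ has distribution $\text{Binomial}(N-k-1, \lambda/N)$, while the corresponding vertex $v$ in $\mathcal{T}$ has $\text{Poisson}(\lambda)$ offspring. I would pair these two variables via the optimal total-variation coupling, which has error of order $\lambda^2/N$. In parallel, I reveal all edges from $x$ to the other previously-discovered vertices (each Bernoulli$(\lambda/N)$, independently); the presence of any such edge would create a cycle in $\ms G_N$ that $\mathcal{T}$ cannot match. If neither type of failure occurs during the at most $|\mathcal{T}_R|$ steps of the exploration, the resulting bijection between the two discovered sets is the desired isometry.

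Second, I would control $|\mathcal{T}_R|$ and sum up the failure probabilities. By the first-moment bound, $|\mathcal{T}_R|$ has $\mathcal{P}$-expectation $\sum_{k=0}^R \lambda^k \leq C \lambda^R = C N^{\gamma\log\lambda}$, so Markov's inequality gives $|\mathcal{T}_R| > m$ with probability at most $C N^{\gamma\log\lambda}/m$. Conditional on $|\mathcal{T}_R| \leq m$, a union bound shows that the total probability of a cycle-creating edge is at most $C m^2 \lambda/N$ and that of a local coupling failure at most $C m \lambda^2/N$. A routine optimization of the threshold $m$ then balances these terms and yields the stated bound $C_1 N^{3\gamma\log\lambda - 1}$, which is $o(1)$ precisely under the hypothesis $\gamma < (3\log\lambda)^{-1}$.

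The main obstacle is the cycle control: matching the branching locally is a standard Binomial--Poisson approximation with per-step error $O(1/N)$, but the tree-likeness of $B(z,R)$ rests on the absence of any collision between two branches of the exploration, and it is the combinatorial bound on the number of such potential collisions, roughly $|\mathcal{T}_R|^2$, which forces the smallness of $\gamma$. The uniform randomness of $z \in \ms V_N$ enters only to ensure that no vertex plays a distinguished role, so that the breadth-first exploration faithfully samples the local structure of $\ms G_N$ around $z$.
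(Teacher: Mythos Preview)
Your exploration-coupling strategy is essentially the one the paper adopts: a breadth-first exploration from $z$, compared generation by generation with the Galton--Watson tree, while tracking (i) edges back to previously-seen vertices that would create a cycle and (ii) the Binomial--Poisson discrepancy in the offspring counts. The paper routes this through an intermediate Binomial-offspring tree $\mathcal T'$ (citing Durrett, \emph{Random Graph Dynamics}, Theorem~2.2.2, for the graph--$\mathcal T'$ step and Le~Cam's theorem for the $\mathcal T'$--$\mathcal T$ step), but the underlying mechanism is the same as yours.

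There is, however, a real gap in your final paragraph. With only Markov's inequality on the first moment of $|\mathcal T_R|$, the optimization does \emph{not} deliver the exponent $3\gamma\log\lambda-1$. Writing $a=\gamma\log\lambda\in(0,1/3)$, your three error terms are
\[
\frac{C\,N^{a}}{m}\;+\;\frac{C\,m^{2}}{N}\;+\;\frac{C\,m}{N}\,,
\]
and balancing the first two yields $m\asymp N^{(1+a)/3}$ and total error $\asymp N^{(2a-1)/3}$. For $a<2/7$ (which lies inside the admissible range $a<1/3$) one has $(2a-1)/3>3a-1$, so the resulting bound is strictly weaker than the one asserted in the proposition; the ``routine optimization'' does not close.

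What is missing is a sharper tail estimate on $|\mathcal T_R|$. The paper invokes a large-deviation bound of Athreya for supercritical branching processes: there exists $\theta=\theta(\lambda)>0$ with $\mathcal P[Z_t\ge A\lambda^{t}]\le C e^{-\theta A}$ uniformly in $t$, which makes $\mathcal P\bigl[\,|\mathcal T_R|\ge N^{3a}\,\bigr]$ super-polynomially small. With such an exponential tail you may, within your own framework, take for instance $m=N^{3a/2}$ at negligible cost in the tail term; then your cycle bound $m^{2}/N=N^{3a-1}$ and your coupling bound $m/N=N^{3a/2-1}$ give exactly the claimed rate. Alternatively, you can avoid truncating the volume for the collision part and bound directly the expected number of collision edges by $C\,\mathcal E\bigl[|\mathcal T_R|^{2}\bigr]/N$; the second moment of the Galton--Watson population up to generation $R$ is $O(\lambda^{2R})=O(N^{2a})$, and this is effectively what the cited Durrett theorem supplies, yielding the even smaller $O(N^{2a-1})$ for that piece.
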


\begin{proof}
We follow an argument similar to the one in
\cite[Section~2.2]{Dur10}. Assume, without loss of generality, that $z
= 1$ and define an exploration of the cluster $\mathcal{C}_1$
containing $1$ in the following way. Let $S_0 = \{2,3,\dots,N\}$, $I_0
= \{1\}$ and $R_0 = \varnothing$. These sets represent respectively
the `susceptible', the `infected' and the `removed' sites. Define 
a discrete time evolution by
\begin{equation*}
\begin{split}
& R_{t+1} = R_t \cup I_t,\\
& I_{t+1} = \{y \in S_t; \xi_{x,y} = 1 \text{ for some $x \in I_t$}\},\\
& S_{t+1} = S_t \setminus I_{t+1}.
\end{split}
\end{equation*}
Note that the cluster $\mathcal{C}_1$ is given by $\cup_{t=1}^\infty
I_t$ and that $B(1,r) = \cup_{t=1}^r I_t$.

In order to couple the above exploration process with a Galton-Watson
branching process, we introduce a new set of independent
Bernoulli($\lambda/N$) random variables $\zeta_{x,y}^t$, $t \geq 1$,
$x\ge 1$, $1 \leq y \leq N$. Let $Z_0 = 1$ and
\begin{equation}
\label{e:Zt}
Z_{t+1} = \sum_{\substack{x \in I_t\\ y \in S_t}} \xi_{x,y} + 
\sum_{\substack{x \in I_t\\ y \in \ms V_N \setminus S_t}} \zeta_{x,y}^t + 
\sum_{x = N + 1}^{N+Z_t-|I_t|} \sum_{y=1}^N \zeta_{x,y}^t\;.
\end{equation}
The first term in the above sum can be written as $|I_{t+1}| +
C_{t+1}$, where $C_{t+1}$ represents the number of `collisions'
occurring in the exploration process, that is, individuals in
$I_{t+1}$ connected to more than one individual in $I_t$.  The second
term stands for the `immigrants' introduced to compensate the fact
that $|S_t| < N$, and the third term for children of individuals that
are not in $I_t$.

It is easy to check that the process $\{Z_t : t \geq 0\}$ is a
branching process with offspring distribution Binomial($N,
\lambda/N$). Let $\mathcal{T}'$ be the random tree associated with
$Z_t$. More precisely, if $x$ is the $i$-th individual in the $t$-th
generation of $\mathcal{T}'$, the number of offsprings of $x$ will be
given by
\begin{equation*}
\begin{cases}
\sum_{y \in S_t} \xi_{x,y} + \sum_{y \in \ms V_N \setminus S_t}
\zeta_{x,y}^t & \text{if $i \leq |I_t|$},\\
\sum_{y = 1}^N \zeta_{x,y}^t & \text{otherwise}.
\end{cases}
\end{equation*}
It is immediate to check that $Z_t$ is the size of the $t$-th
generation of $\mathcal{T}'$ and that $Z_t \geq |I_t|$.

On the event $Z_s = |I_s|$, $1\le s\le t$, there were no collisions
and no immigrants. Therefore, in this event the subgraph $(1,B(1,t))$
of $\ms G_N$ is isometric to the subgraph
$(\varnothing,B(\varnothing,t))$ of $\mathcal{T}'$.  Hence, by
\cite[Theorem~2.2.2]{Dur10} with $t = \gamma \log N$, there exist a
constant $C_1<\infty$ and a coupling $Q'$ between $\ms G_N$ and
$\mathcal{T}'$ such that with probability at least $1 - C_1
N^{2\gamma \log \lambda -1}$, $(1,B(1,t))$ is isometric to
$(\varnothing,B(\varnothing,t))$.

\smallskip\noindent{\bf Claim A:} Let $0<\gamma < (3\log
\lambda)^{-1}$. There exist $n_0$ and a coupling $Q''$ between the
tree $\mathcal{T}'$ with Binomial($N, \lambda/N$) offsprings and the
tree $\mathcal{T}$ with Poisson($\lambda$) offsprings, such that, with
probability at least $1 - C_1 N^{3\gamma \log \lambda -1}$,
$(\varnothing,B(\varnothing, \gamma \log N))$ (in $\mathcal{T}'$) is
isometric to $(\varnothing, B(\varnothing, \gamma \log N))$ (in
$\mathcal{T}$) for $N\ge n_0$.

It is well known that a Poisson($\lambda$) random variable $Y$ can be
coupled with a Binomial($N$, $\lambda / N$) random variable $Y'$, in a
way that
\begin{equation}
\label{e:YYp}
P[Y=Y'] \;\ge\;  1 - 2\lambda^2 N^{-1}\;, 
\end{equation}
see for instance \cite[Chapter~2.6]{du} or \cite[Theorem~1]{LC60} for
a bound on the total variation distance and \cite[Chapter 4]{LPW09}
for a connection between total variation distance and coupling.  On
the other hand, by \cite[Theorem~4]{Ath94}, there exist $\theta =
\theta(\lambda) > 0$ and $C_3$ such that for and any $t$, $A \geq
0$,
\begin{equation*}
\mathcal{P} \big[ Z_t \geq A \lambda^{t} \big] \;=\;
\mathcal{P}\big[ e^{ \theta \, (Z_t / \lambda^t) } 
\geq e^{\theta A} \big] \;\leq\; e^{- \theta A}\,
\mathcal{E} \big[ e^{ \theta \,(Z_t / \lambda^t) } \big]
\;\leq\; C_3 \, e^{-\theta A } \; .
\end{equation*}
This bound permits to estimate the volume of the subgraph
$B(\varnothing, \gamma \log N)$ of $\mc T$. Fix $\gamma \in
(0,1)$. Since $|B(\varnothing, \gamma \log N)| = \sum_{0\le t\le
  \gamma \log N} Z_t$, we have that
\begin{equation*}
\begin{split}
& \mathcal{P} \big[|B(\varnothing, \gamma \log N)| 
\geq N^{3 \gamma \log \lambda } \big]  \\
&\quad \leq\; 
\sum_{t = 0}^{\gamma \log N} \mathcal{P}[Z_t \geq 
N^{2 \gamma \log \lambda }]
\;\leq\; \sum_{t = 0}^{\gamma \log N} \mathcal{P}[Z_t \geq 
N^{\gamma \log \lambda} \lambda^t]
\end{split}
\end{equation*}
for all $N \ge N_0(\lambda,\gamma)$. Therefore, applying the previous
estimate, we conclude that for every $0<\gamma<1$, there exist
$C_3 <\infty$ and $N_0(\lambda,\gamma) < \infty$ such that 
\begin{equation}
\label{e:volume}
\mathcal{P} \big[|B(\varnothing, \gamma \log N)| 
\geq N^{3 \gamma \log \lambda } \big]
\; \leq\;  C_3 \exp\{- \theta \, N^{\gamma \log \lambda}\}.
\end{equation}
for all $N\ge N_0$.

Claim A follows from \eqref{e:YYp} and \eqref{e:volume}, which
concludes the proof of Proposition~\ref{l:isom}.
\end{proof}

In the proof of the previous lemma we also obtained a bound on the
size of a ball $B(z, \gamma \log N)$ around a typical point $z$. 

\begin{corollary}
\label{c:volume}
For any $0 < \gamma < (3\log \lambda)^{-1}$, there exist a
finite constant $C_2$ and an integer $N_0$, depending
only on $\lambda$ and $\gamma$, such that for any random point $z \in
\{1,\dots,N\}$,
\begin{equation*}
\mathbb{P} \big[ |B(z,\gamma \log N)| \geq  
N^{3 \gamma \log \lambda } \big] \;\leq \; C_2 \, N^{3 \gamma \log
  \lambda - 1} 
\end{equation*}
for all $N\ge N_0$.
\end{corollary}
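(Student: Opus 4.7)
The plan is to combine the coupling constructed in the proof of Proposition~\ref{l:isom} with the tail estimate \eqref{e:volume} on the volume of the Galton--Watson ball, and then simply bound the bad event by the trivial inequality.

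More precisely, first I would invoke Proposition~\ref{l:isom} to obtain, for $N\ge N_0$, a coupling $Q_N$ of $\mathsf G_N$ with the Galton--Watson tree $\mathcal T$ under which the event
\[
\Xi_N \;=\; \bigl\{ (z,B(z,\gamma\log N)) \text{ is isometric to }
(\varnothing,B(\varnothing,\gamma\log N))\bigr\}
\]
has $Q_N$-probability at least $1-C_1 N^{3\gamma\log\lambda-1}$. On $\Xi_N$ the two balls have the same cardinality, so
\[
Q_N\bigl[|B(z,\gamma\log N)|\ge N^{3\gamma\log\lambda}\bigr]
\;\le\; Q_N[\Xi_N^c] \;+\; \mathcal P\bigl[|B(\varnothing,\gamma\log N)|\ge N^{3\gamma\log\lambda}\bigr].
\]

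The second term on the right-hand side is controlled by \eqref{e:volume}, which, under the hypothesis $\gamma<(3\log\lambda)^{-1}$, gives a super-polynomial bound $C_3\exp\{-\theta N^{\gamma\log\lambda}\}$, negligible compared with $N^{3\gamma\log\lambda-1}$ for $N$ large. The first term is at most $C_1 N^{3\gamma\log\lambda-1}$ by construction of the coupling. Since $Q_N$ is a coupling, the left-hand side equals $\mathbb{P}[|B(z,\gamma\log N)|\ge N^{3\gamma\log\lambda}]$, and choosing $C_2=C_1+1$ and enlarging $N_0$ if necessary yields the claimed inequality.

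There is no real obstacle here: everything needed has already been produced in the proof of Proposition~\ref{l:isom}, and the corollary is essentially a book-keeping consequence. The only point to keep in mind is that the exponent $3\gamma\log\lambda$ must remain strictly below $1$ so that the polynomial error term dominates over both the bad-coupling probability and the Galton--Watson deviation, which is exactly the role of the hypothesis $\gamma<(3\log\lambda)^{-1}$.
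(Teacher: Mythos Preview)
Your proposal is correct and follows exactly the approach the paper intends: the paper does not even write out a separate proof for this corollary, merely remarking that the bound was already obtained in the proof of Proposition~\ref{l:isom}, and your argument is precisely the decomposition into the bad-coupling event plus the Galton--Watson deviation \eqref{e:volume}.
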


As required in \eqref{e:c4}, we extend the local isometry obtained in
Proposition~\ref{l:isom} to various balls in the random graph $\ms G_N$.

\begin{corollary}
\label{c:manyballs}
Fix positive numbers $b$ and $\gamma$ such that $0 < 2b + 6 \gamma
\log \lambda<1$.  There exist constants $C_0$, $N_0$, depending only
on $\lambda$ and $\gamma$, and a coupling $Q'=Q'_N$ between the random
graph $\ms G_N$ and $N^{b}$ independent Galton-Watson trees
$\mathcal{T}_i$, $1\le i \le N^{b}$, such that for all $N\ge N_0$,
\begin{equation*}
Q' [\ms B^c ] \;\leq \; C_0 \, N^{2b + 6 \gamma \log \lambda -1} \;,
\end{equation*}
where $\ms B$ is the event ``The balls $(z_i,B(z_i, \gamma \log N))$,
$1\le i \le N^{b}$, are disjoint and isometric to $(\varnothing_i,
B(\varnothing_i, \gamma \log N))$'', and $z_1, \dots, z_{N^{b}}$ are
sites randomly chosen in $\{1,\dots, N\}$.
\end{corollary}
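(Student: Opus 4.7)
My plan is to iterate the single-vertex coupling of Proposition \ref{l:isom} over the $N^b$ starting points $z_1, \dots, z_{N^b}$, controlling the two distinct failure modes separately by a union bound: the event that two of the balls $B(z_i, \gamma\log N)$ overlap, and the event that some individual ball fails to be isometric to an independent Galton-Watson neighborhood. Both events will have probability of order $N^{2b+6\gamma\log\lambda-1}$, which is $o(1)$ by the standing hypothesis $2b+6\gamma\log\lambda<1$.

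For the disjointness, note that the balls $B(z_i,\gamma\log N)$ and $B(z_j,\gamma\log N)$ are disjoint if and only if $z_j\notin B(z_i, 2\gamma\log N)$. Since $6\gamma\log\lambda<1$ we have $2\gamma<(3\log\lambda)^{-1}$, so Corollary \ref{c:volume} applies with radius $2\gamma\log N$, yielding
\begin{equation*}
\mathbb{P}\bigl[|B(z_1,2\gamma\log N)|\geq N^{6\gamma\log\lambda}\bigr]\;\leq\; C_2\, N^{6\gamma\log\lambda-1}.
\end{equation*}
A union bound over the $N^b$ balls shows that all of them have volume at most $N^{6\gamma\log\lambda}$ with probability at least $1-C_2N^{b+6\gamma\log\lambda-1}$. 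Conditional on this uniform volume bound and on $z_1$, the chance that a fresh uniform point $z_j$ lands in $B(z_i, 2\gamma\log N)$ is at most $N^{6\gamma\log\lambda-1}$, so a union bound over the $\binom{N^b}{2}$ pairs yields an overall failure probability of order $N^{2b+6\gamma\log\lambda-1}$ for disjointness (the probability of coincident starting points, of order $N^{2b-1}$, is even smaller since $2b<1$).

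For the isometry part, I will run the exploration process of the proof of Proposition \ref{l:isom} sequentially from $z_1,\dots,z_{N^b}$, using for each $z_i$ a fresh copy of the auxiliary Bernoulli variables $\zeta^t_{x,y}$ from \eqref{e:Zt} to supplement the edge indicators and produce a genuine Galton-Watson tree $\mathcal{T}_i$. On the disjointness event above, the exploration from $z_i$ is confined to $B(z_i,\gamma\log N)$ and thus does not interact with the previously explored regions, so the same Binomial-versus-Poisson comparison used in Proposition \ref{l:isom} and the tree property (absence of collisions and of immigrants) make $(z_i,B(z_i,\gamma\log N))$ isometric to $(\varnothing_i,B(\varnothing_i,\gamma\log N))$ with probability at least $1-C_1N^{3\gamma\log\lambda-1}$ independently for each $i$. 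Moreover the trees $\mathcal{T}_i$ are independent because they are built from disjoint blocks of Bernoulli random variables. Summing the individual failures over the $N^b$ balls contributes a further $C_1 N^{b+3\gamma\log\lambda-1}$, again dominated by $N^{2b+6\gamma\log\lambda-1}$.

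The only delicate point is bookkeeping the coupling so that the trees $\mathcal{T}_i$ are genuinely independent and not merely marginally Galton-Watson: this is handled by reserving a distinct family $\{\zeta^{t,i}_{x,y}\}$ of auxiliary Bernoullis for each index $i$ (and never reusing the graph-edge Bernoullis that were consumed during earlier explorations), so that the $i$-th tree depends only on the $i$-th auxiliary family together with the edges incident to the newly discovered component. Combining the two error estimates produces the stated bound $C_0 N^{2b+6\gamma\log\lambda-1}$.
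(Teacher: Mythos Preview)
Your decomposition (disjointness plus per-ball isometry, combined by a union bound) is the same as the paper's, and the numerics are right. The paper, however, secures the independence of the trees $\mathcal{T}_i$ by a cleaner device: it starts from $N^b$ \emph{independent} Erd\H{o}s--R\'enyi graphs $\ms G^i_N$, couples each one with a Galton--Watson tree via Proposition~\ref{l:isom} (so independence of the $\mathcal{T}_i$ is automatic), and only then assembles a single $\ms G_N$ by reading the edges of $\ms G^i_N$ during the $i$-th exploration, never overwriting previously assigned edges. On the disjointness event the ball around $z_i$ in $\ms G_N$ then coincides with the ball in $\ms G^i_N$, and the isometry to $\mathcal{T}_i$ is inherited directly from the single-ball coupling.

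Your sequential construction on one graph also works, but one step is glossed over. The claim that ``the exploration from $z_i$ does not interact with the previously explored regions'' is true for \emph{vertices} on the disjointness event, but not for \emph{edges}: the edge $\xi_{x,y}$ with $x\in B(z_i,\gamma\log N-1)$ and $y\in B(z_j,\gamma\log N-1)$ is queried in both explorations. Your fix of substituting a fresh $\zeta^i$ for any already-consumed $\xi$ does deliver genuinely independent trees, but each such substitution is a potential extra immigrant (the replaced $\xi$ is $0$ on the disjointness event, while $\zeta^i$ need not be), so the per-ball isometry bound $C_1 N^{3\gamma\log\lambda-1}$ from Proposition~\ref{l:isom} no longer applies verbatim. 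You should bound this separately: on the small-balls event there are at most $N^{b+6\gamma\log\lambda}$ replaced edges per exploration, contributing at most $\lambda N^{2b+6\gamma\log\lambda-1}$ to the total failure probability, which is absorbed into the stated bound. With this correction your argument is complete.
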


\begin{proof}
Choose randomly $N^{b}$ sites on $\{1, \dots, N\}$, denoted by $z_1,
\dots, z_{N^{b}}$.  By Proposition \ref{l:isom}, for $N$ large, there is a
coupling $Q'$ between independent Erd\"os-R\'enyi random graphs $\ms
G^i_N$, $1\le i \le N^{b}$, and independent Galton-Watson trees
$\mathcal{T}_i$ in a way that with probability at least $1 - C_1
N^{b} N^{3\gamma \log \lambda -1}$ each ball $\big(z_i,B(z_i, \gamma
\log N) \big)$ in $\ms G^i_N$ is isomorphic to $\big(\varnothing_i,
B(\varnothing_i, \gamma \log N) \big)$ in $\mathcal{T}_i$.

We construct an Erd\"os-R\'enyi-distributed graph $\ms G_N$ which is
partially determined by the above $\ms G^i_N$'s. We first explore the
ball $B(z_1, \gamma \log N)$ in $\ms G^1_N$. Every edge $\{x,y\}$
revealed during this exploration is open in $\ms G_N$ if and only if it
is open in $\ms G^1_N$. Then we proceed by exploring $B(z_2, \gamma
\log N)$ in $\ms G_N$ observing only that we do not reassign values to
edges in $\ms G_N$ that were already established in the previous
step. After proceeding with this exploration for $i = 1,\dots,
N^{b}$, we assign the remaining edges of $\ms G_N$ independently.

It is clear from the above exploration procedure that the graph $\ms
G_N$ is distributed as an Erd\"os-R\'enyi random graph. Moreover, on the
event $\ms A$ defined as ``the balls $B(z_i,\gamma \log N)$, $i =
1,\dots, N^{b}$, are pairwise disjoint in $\{1,\dots, N\}$'', we
have that $(z_i, B(z_i, \gamma \log N))$ in $\ms G_N$ is isomorphic to
the corresponding pair in $\ms G^i_N$. Consequently they will be
isomorphic to $(\varnothing_i, B(\varnothing_i, \gamma \log N))$ in
$\mathcal{T}_i$. Therefore, to conclude the proof of the corollary, it
remains to estimate $Q'[\ms A^c]$.

Since all the vertices are indistinguishable, $Q'[\ms A^c]$ is bounded
by
\begin{equation*}
N^{2b} Q' \big[ B(z_1, \gamma \log N) 
\cap B(z_2, \gamma \log N) \not =\varnothing \big] 
\;=\; N^{2b} Q' \big[ z_1 \in  B(z_2, 2 \gamma \log N) 
\big]\;.
\end{equation*}
Since $z_2$ is independent of $z_1$, this latter probability is
bounded by
\begin{equation*}
Q' \big[ | B(z_2, 2 \gamma \log N) |  \ge N^{6 \gamma \log \lambda }
\big] \;+\; \frac 1N\, N^{6 \gamma \log \lambda}\;.
\end{equation*}
By Corollary \ref{c:volume}, for $N$ large, the first term is bounded
above by $C_2 N^{6 \gamma \log \lambda -1}$ for some finite
constant $C_2$. Hence,
\begin{equation*}
Q'[\ms A^c] \;\le\; C_2 N^{2b + 6 \gamma \log \lambda -1} \;,
\end{equation*}
which proves the corollary.
\end{proof}

It is a well known fact that
\begin{equation}
\label{e:trans}
\text{conditioned on being infinite, $\mathcal{T}$ 
is $\mathcal{P}$-a.s. transient,}
\end{equation}
see Theorem 3.5 and Corollary~5.10 in \cite{LP11}. We denote by
$v_\varnothing$ the probability that a simple random walk starting at
$\varnothing$ never returns to this site, the so called escape
probability. As we will show, the distribution of $v_\varnothing$
under $\mathcal{P}$ is close to that of the probability that a random
walk on the giant component $\mathcal{C}_{\text{max}}$ of the random
graph $\ms G_N$ escapes from a certain neighborhood of a random
vertex.

Since the isometry obtained in Corollary~\ref{c:manyballs} is local,
we need a tool to show that looking at a neighborhood of $\varnothing
\in \mathcal{T}$ we can obtain precise estimates on the escape
probability $v_\varnothing$. The next result plays a central role in
this respect. Denote by $\Delta_l$, $l \ge 0$, the points of the
$l$-th generation of a tree: $\Delta_l = B(\varnothing,l) \setminus
B(\varnothing,l-1)$.

For a fixed tree $\mc T$, we denote by $\mb P_y$, $y\in \mc T$ the
probability induced by the discrete-time simple random walk on $\mc T$
starting from $y$.

\begin{proposition}
\label{l:nodive}
There exist constants $c_1$, $c_2$, depending only on $\lambda$, such
that, for every $l \geq 1$,
\begin{equation*}
\mathcal{P} \Big[
\sup_{y\in \Delta_l} \mb P_y [H_\varnothing < \infty ] 
\geq \exp \{-c_1 l\} \Big] \;\leq\; \exp \{-c_2 l\}\;.
\end{equation*}
\end{proposition}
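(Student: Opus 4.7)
\smallskip
\noindent\emph{Proof plan.} The plan is to factor $\mb P_y[H_\varnothing<\infty]$ along the ancestral path of $y$, pass from $\sup_{y\in\Delta_l}$ to a first-moment estimate via the many-to-one identity, and then close with a Cram\'er-type large deviation bound that exploits the Poisson-specific inequality $q<1/\lambda$ for the extinction probability. A standard one-step analysis on the tree, using that any walker from a descendant of $v_i$ reaches $v_{i-1}$ only via $v_i$, yields the product formula
\[
\mb P_y[H_\varnothing<\infty]\;=\;\prod_{i=1}^l u_i,\qquad u_i\;=\;\frac{1}{1+\Xi(v_i)},\quad \Xi(v)\,:=\,\sum_{c\text{ child of }v}\theta(T_c),
\]
valid for $y\in\Delta_l$ with ancestral path $\varnothing=v_0,\dots,v_l=y$, where $\theta(T_c)\in[0,1)$ is the probability that a walker started at the root of the subtree $T_c$, with an absorbing copy of the parent attached, never reaches the parent. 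For a Poisson$(\lambda)$ Galton--Watson subtree, $\theta>0$ precisely on survival, so $\mathcal{P}[\theta(T)>0]=1-q$.

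I would then convert the supremum into a first moment and apply the many-to-one/Kesten spine formula:
\[
\mathcal{P}\Big[\sup_{y\in\Delta_l}\mb P_y[H_\varnothing<\infty]\ge\tau\Big]\;\le\;\mathcal{E}\Big[\#\{y\in\Delta_l:\mb P_y[H_\varnothing<\infty]\ge\tau\}\Big]\;=\;\lambda^l\,\widetilde{\mathcal Q}_l\big[\mb P_{y^\ast}[H_\varnothing<\infty]\ge\tau\big],
\]
where under the size-biased law $\widetilde{\mathcal Q}_l$ a spine $v_0,\dots,v_l=y^\ast$ is marked and at each spine vertex $v_i$ one attaches an independent Poisson$(\lambda)$ collection of off-spine children, each rooting an independent Galton--Watson subtree. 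Consequently the off-spine partial sums $\widehat\Xi_i:=\sum_{c\text{ off-spine child of }v_i}\theta(T_c)$ for $i=1,\dots,l$ are i.i.d.\ under $\widetilde{\mathcal Q}_l$; since $\Xi(v_i)\ge\widehat\Xi_i$ this gives $\mb P_{y^\ast}[H_\varnothing<\infty]\le\prod_{i=1}^l(1+\widehat\Xi_i)^{-1}$. Combining the Poisson probability generating function with the fixed-point identity $q=e^{-\lambda(1-q)}$ yields $\widetilde{\mathcal Q}[\widehat\Xi_i=0]=q$, so $\widetilde{\mathcal Q}[\widehat\Xi_i\ge\epsilon]\to 1-q$ as $\epsilon\downarrow 0$. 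A Chernoff/Cram\'er bound then gives, for every $\epsilon>0$ sufficiently small,
\[
\widetilde{\mathcal Q}_l\big[\#\{i\le l:\widehat\Xi_i\ge\epsilon\}\le\beta l\big]\;\le\; e^{-I_\epsilon(\beta)\,l},\qquad \sup_{\beta>0}I_\epsilon(\beta)\longrightarrow-\log q\ \text{ as }\ \epsilon\downarrow 0,
\]
and on the complementary event the product is at most $(1+\epsilon)^{-\beta l}$.

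The main obstacle is to beat the $\lambda^l$ prefactor produced by the many-to-one step, which demands $I_\epsilon(\beta)>\log\lambda$. Since the best achievable rate is $-\log q$, this reduces to the Poisson-specific inequality $q<1/\lambda$, equivalently $e^{\lambda-1}>\lambda$, which holds for every $\lambda>1$ by strict convexity of the exponential applied to $f(s)=e^{\lambda(s-1)}$ at $s=1/\lambda$. With this input, $\epsilon$ and $\beta$ can be chosen so that $I_\epsilon(\beta)>\log\lambda$; setting $c_1:=\beta\log(1+\epsilon)$ and $c_2:=I_\epsilon(\beta)-\log\lambda>0$ then gives $\mathcal{P}[\sup_{y\in\Delta_l}\mb P_y[H_\varnothing<\infty]\ge e^{-c_1 l}]\le e^{-c_2 l}$, as required.
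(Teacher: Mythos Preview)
Your proof is correct and takes a genuinely different route from the paper.

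The paper argues as follows: it defines an ancestor $y_k$ on the path to $y\in\Delta_l$ to be ``good'' if some off-path child of $y_k$ roots a subtree with escape probability at least $\delta$, and then invokes \cite[Lemma~1]{GK84} as a black box to conclude that, with probability at least $1-e^{-c_4 l}$, every $y\in\Delta_l$ has at least $c_3 l$ good ancestors. It then prunes the tree to keep only the escape tunnels and bounds $\mb P_y[H_\varnothing<\infty]$ by $[1-\delta/3]^{c_3 l-1}$ directly on the pruned graph.

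Your argument replaces the appeal to \cite{GK84} by an explicit first-moment computation: the effective-conductance product formula $\mb P_y[H_\varnothing<\infty]=\prod_{i=1}^l(1+\Xi(v_i))^{-1}$, the many-to-one identity, and a Cram\'er bound on the i.i.d.\ off-spine quantities $\widehat\Xi_i$ under the spine measure. The novel ingredient is the Poisson-specific inequality $q<1/\lambda$ (equivalently $e^{\lambda-1}>\lambda$ for $\lambda>1$), which is exactly what is needed so that the best Cram\'er rate $-\log q$ beats the $\lambda^l$ blow-up from many-to-one. This step is genuinely specific to Poisson offspring: for heavy-tailed two-point offspring distributions one can have $q m>1$, and then the first-moment route would fail even though the proposition remains true. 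In the present setting (Erd\H{o}s--R\'enyi, hence Poisson) this is not a limitation, and your proof has the advantage of being entirely self-contained, whereas the paper outsources the uniform-over-$\Delta_l$ combinatorics to \cite{GK84}.
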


\begin{proof}
Throughout the proof of this lemma, given a rooted tree $\mathcal{T}$
and a vertex $y \in \mathcal{T}$, we denote by $\mathcal{T}_y$ the
subtree formed by the root $y$ together with the descendants of $y$ in
$\mathcal{T}$.

The idea is to show that in the path between $y$ and $\varnothing$
there are many tunnels from which the random walk can escape to
infinity. In order to properly define these tunnels, we need to
introduce some extra notation. For an arbitrary tree $\mathcal{T}$
rooted at $\varnothing$, we define the tree
$\mathcal{T}^{\text{tail}}$, obtained by adding a vertex
$\varnothing'$ which is connected to $\varnothing$ by an edge. This
extra element should be regarded as the ancestor of $\varnothing$. In
the proof, we use the notation $\mb P_x^{\mathcal{T}}$ to specify on
which tree the random walk is defined.

For a given $\delta > 0$ and a tree $\mathcal{T}$ with root
$\varnothing$, we say that $\mathcal{T}$ satisfies the property
$\mathcal{Q}^\delta$ if
\begin{equation*}
\mb P^{\mathcal{T}^{\text{tail}}}_\varnothing 
[ H_{\varnothing'} = \infty] \geq \delta\; .
\end{equation*}
In other words, the property $\mathcal{Q}^\delta$ is saying that a
random walk on $\mathcal{T}^{\text{tail}}$ has probability at least
$\delta$ of never hitting the ancestor $\varnothing'$ of the root
$\varnothing$.

It is clear from \eqref{e:trans} that for every $\epsilon > 0$, there
exists a $\delta = \delta(\epsilon, \lambda) > 0$ such that
\begin{equation}
\label{e:deltaescape}
\mathcal{P} [ \text{$\mathcal{T}$ does not satisfy 
$\mathcal{Q}^\delta$} ] \leq q + \epsilon,
\end{equation}
where $q$ is the extinction probability: $q=\mathcal{P}[\mathcal{T}
\text{ is finite}]$.

If $y$ is in the $l$'th generation of $\mathcal{T}$, we write
$\varnothing = y_0$, $y_1$, $\dots$, $y_l = y$ to denote the unique
simple path connecting $\varnothing$ to $y$. Moreover, we denote by
$\Gamma(y)$ the number of elements $y_k$, $0\le k <l$, having at least
one descendant $y_k' \not = y_{k+1}$ such that $\mathcal{T}_{y_k'}$
satisfies $\mathcal{Q}^\delta$.

We can now use \eqref{e:deltaescape} together with in \cite[
Lemma~1]{GK84} to conclude that there exist constants $c_3$ and $c_4$
such that
\begin{equation*}
\mathcal{P} [ \exists \, y \in \Delta_l
\text{ such that $\Gamma(y) < c_3 l$} ] \leq \exp \{-c_4 l\}\;.
\end{equation*}
To conclude the proof of the lemma it remains to show that there
exists $c_1>0$ for which the event ``$\exists \, y\in \Delta_l$ such
that $\mb P_y [H_\varnothing < \infty ] \geq \exp \{-c_1 l\}$'' is
contained in the event ``$\exists \, y \in \Delta_l$ such that
$\Gamma(y) < c_3 l$''.

Assume that all points $z$ in generation $l$ of $\mathcal{T}$ are such
that $\Gamma(z) \ge c_3 l$ and fix a point $y\in \Delta_l$. Recall the
definition of $y_0, \dots, y_l$ given above and consider a subsequence
$k_j$, $1\le j \le c_3 l$, for which $y_{k_j}$ has a descendant
$y'_{k_j} \not = y_{k_j + 1}$ such that $\mathcal{T}_{y'_{k_j}}$
satisfies $\mathcal{Q}^\delta$. These points are the entrance to the
tunnels $\mathcal{T}_{y_{k_j}'}$ that we have referred to in the
beginning of the proof.

Let $\mathcal{T}_-$ be the subtree of $\mathcal{T}$ with all the
descendants of $y_{k_j}$ removed, $1\le j \le c_3 l$, with the
exception of $y_{k_j + 1}$ and $y_{k_j}'$. An argument based on flows
or capacities shows that $\mb P_y^{\mathcal{T}}[H_\varnothing <
\infty] \le \mb P_y^{\mathcal{T}_-}[H_\varnothing < \infty] \le \mb
P_{y_{k_m}}^{\mathcal{T}_-} [ H_\varnothing < \infty]$ where $m=c_3
l$.  By the strong Markov property,
\begin{equation*}
\mb P_{y_{k_m}}^{\mathcal{T}_-} [ H_\varnothing < \infty]
\;\leq \; \mb P_{y_{k_m}}^{\mathcal{T}_-} 
[H_{y_{k_{m-1}}} < \infty] \,
\mb P_{y_{k_{m-1}}}^{\mathcal{T}_-} [H_\varnothing < \infty]\;.
\end{equation*}
Since $\mathcal{T}_{y'_{k_j}}$ satisfies $\mathcal{Q}^\delta$ and
since we removed all descendants of $y_{k_j}$ with the exception of
$y'_{k_j}$ and $y_{k_j +1}$, $\mb P_{y_{k_m}}^{\mathcal{T}_-}
[H_{y_{k_{m-1}}} =\infty] \ge (1/3) \mb P_{y_{k_m}'}^{\mathcal{T}_-}
[H_{y_{k_m}} =\infty] \ge \delta/3$. Hence, the previous expression is
bounded by
\begin{equation*}
[1-(\delta/3)]\, \mb P_{y_{k_{m-1}}}^{\mathcal{T}_-} 
[H_\varnothing < \infty]\;.
\end{equation*}
Iterating this argument $m-1$ times we finally get that $\mb
P_y^{\mathcal{T}}[H_\varnothing < \infty]$ is bounded by
$[1-(\delta/3)]^{c_3 l-1}$, which concludes the proof of the lemma.
\end{proof}

Proposition~\ref{l:nodive} permits to approximate the inverse of the escape
probability $v_\varnothing$ by a local quantity.
Fix a infinite tree $\mc T$ and $m\ge 1$. Let $v^{(m)}_\varnothing$ be
the probability to escape from $B(\varnothing, m)$,
$v^{(m)}_\varnothing = \mb P_\varnothing [{H}^+_\varnothing >
H_{B(\varnothing, m)^c}]$.  Recall from \cite[Chapter 9]{LPW09} the
notion of flow and energy of a flow.  Since $|\mathcal{T}| = \infty$,
we can define a trivial unit flow from $\varnothing$ to
$B(\varnothing, m)^c$ which has energy equal to $m$. Hence, by
Proposition~9.5 and Theorem~9.10 of \cite{LPW09}, 
\begin{equation}
\label{31}
v_{\varnothing}^{(m)} \;\geq\;  (d_\varnothing m)^{-1}\;,
\end{equation}
where $d_\varnothing$ is the degree of the root.

\begin{corollary}
\label{c:bigB}
There exist positive constants $c_1$ and $c_2$, depending only on
$\lambda$, such that
\begin{equation*}
\mathcal{P}\Big[ |\Delta_l| 
< \exp\{c_1 l\} \, \big|\, \Delta_l \not = 
\emptyset\Big] \leq \exp\{-c_2 l\}
\end{equation*}
for every $l \geq 1$.
\end{corollary}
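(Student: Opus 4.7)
The goal is a quantitative Kesten-Stigum-type statement: conditionally on survival to generation $l$, the population size $Z_l := |\Delta_l|$ exceeds $e^{c_1 l}$ with probability exponentially close to one. The plan is to apply an exponential Markov (Chernoff) inequality together with a careful two-phase analysis of the iterated pgf $f_l$, where $f(s) = e^{\lambda(s-1)}$ is the pgf of the Poisson$(\lambda)$ offspring distribution.

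First, for any $\theta > 0$ and $K > 0$,
$$\mathcal{P}[1 \leq Z_l \leq K] \;\leq\; e^{\theta K}\bigl(f_l(e^{-\theta}) - f_l(0)\bigr),$$
because on the event in question $e^{-\theta Z_l} \geq e^{-\theta K}$, and $\mathcal{E}[e^{-\theta Z_l}; Z_l \geq 1] = f_l(e^{-\theta}) - f_l(0)$. Taking $K = e^{c_1 l}$ and $\theta = e^{-c_1 l}$ yields the clean factor $e^{\theta K} = e$, so the proof reduces to showing that $f_l(e^{-\theta}) - f_l(0)$ decays exponentially in $l$. Denoting by $q$ the unique fixed point of $f$ in $(0,1)$ (the extinction probability), I decompose this difference as $[f_l(e^{-\theta}) - q] + [q - f_l(0)]$ and treat each term separately.

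The term $q - f_l(0) = q - q_l$ is standard: on $[0,q]$, convexity of $f$ gives $f'(s) \leq f'(q) = \lambda q < 1$ (the inequality uses supercriticality $\lambda > 1$ via $\lambda q = f'(q)$ and the fact that $q$ is the unstable fixed point), so $f$ is a contraction toward $q$ on $[0,q]$ with rate $\rho := \lambda q$, giving $q - f_l(0) \leq \rho^l$. The main work is in analyzing $f_l(e^{-\theta}) - q$. Setting $u_k := 1 - f_k(e^{-\theta})$, the Poisson identity $f(1-u) = e^{-\lambda u}$ gives the clean recursion $u_{k+1} = 1 - e^{-\lambda u_k}$; while $u_k \ll 1$ this yields $u_{k+1} \approx \lambda u_k$, so starting from $u_0 \asymp e^{-c_1 l}$ the iterate $u_k$ grows geometrically and reaches a value bounded away from zero after $k_0 \approx c_1 l / \log \lambda$ steps. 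After $O_\lambda(1)$ further ``bulk'' iterations, $f_k(e^{-\theta})$ enters a small neighborhood $[q, q+\delta]$ on which $f'(\cdot) \leq \rho + \epsilon < 1$ by continuity of $f'$ at $q$; from there, linearization gives
$$f_l(e^{-\theta}) - q \;\leq\; C\,(\rho+\epsilon)^{l - k_0 - O_\lambda(1)} \;=\; O\bigl((\rho+\epsilon)^{l(1 - c_1/\log\lambda)}\bigr).$$

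Choosing any $c_1 \in (0, \log\lambda)$ small enough, both pieces of the decomposition decay exponentially in $l$, so $\mathcal{P}[1 \leq Z_l \leq e^{c_1 l}] \leq e^{-c_2 l}$ for some $c_2 = c_2(\lambda) > 0$. The conditional estimate follows because $\mathcal{P}[\Delta_l \neq \emptyset] = 1 - q_l \geq 1 - q > 0$ is bounded away from zero uniformly in $l$, so dividing costs only a multiplicative constant that can be absorbed into a slightly smaller $c_2$. The principal technical obstacle is the two-phase analysis of $u_k$: one must verify that the quadratic error in the linearization $u_{k+1} = \lambda u_k - (\lambda u_k)^2/2 + O(u_k^3)$ does not spoil the exponential growth across the $\Theta(l)$ steps of the first phase, and that the transition from the bulk into the linearly contractive window around $q$ takes only $O_\lambda(1)$ steps, so that the ``wasted'' exponent $c_1 l / \log\lambda$ is the only concession.
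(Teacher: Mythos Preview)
Your proof is correct and takes a genuinely different route from the paper's. You give a self-contained generating function argument: an exponential Markov bound reduces the question to estimating $f_l(e^{-\theta}) - f_l(0)$, which you handle by a two-phase analysis of the iterated pgf near the fixed points $q$ and $1$. This is the classical large-deviations approach to branching processes and works cleanly here because the Poisson pgf gives the explicit recursion $u_{k+1} = 1 - e^{-\lambda u_k}$.

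The paper instead derives the corollary from Proposition~\ref{l:nodive}, which bounds the hitting probability $\max_{y\in\Delta_l}\mb P_y[H_\varnothing<\infty]$ on the Galton--Watson tree. Collapsing $\Delta_l$ to a single vertex and using the stationary-measure identity $\pi(z_l)/\pi(\varnothing) = |\Delta_l|/d_\varnothing$ together with the escape-probability representation of that ratio, the paper obtains $|\Delta_l|^{-1} \le l\cdot\max_{y\in\Delta_l}\mb P_y[H_\varnothing<\infty]$, and then invokes Proposition~\ref{l:nodive}. So the paper's proof is short because it reuses random-walk estimates already proved for other purposes, and it stays within the electrical-network theme of the section. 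Your argument is longer but entirely self-contained, requiring no random-walk input; it would work equally well for any supercritical offspring law with finite mean and pgf analytic near $1$.

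One terminological slip: you call $q$ the ``unstable'' fixed point, but in fact $f'(q)=\lambda q<1$, so $q$ is the \emph{attracting} fixed point on $[0,1)$ and $1$ is the repelling one. The inequality $\lambda q<1$ that you use is correct and standard for supercritical Galton--Watson processes; only the label is off.
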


\begin{proof}
For a tree with at least $l$ generations, let $\mathcal{G}_l$ be the
graph obtained by identifying all points in $\Delta_l$, naming this
vertex $z_l$. All other sites are left untouched, and the number of
vertices of this new graph is $|B(\varnothing,l)| - |\Delta_l| +1$. We
know that
\begin{equation*}
|\Delta_l|/d_\varnothing = \pi(z_l)/\pi(\varnothing),
\end{equation*}
where $\pi$ stands for the stationary measure of a simple random walk
on $\mathcal{G}_l$.  The ratio in the right hand side of the above
equation can be estimated using the escape probabilities from these
two points. If $\mb P^{\mc G}_x$, $x\in \mc G_l$, stands for the
probability on the path space induced by a discrete-time random walk
on $\mc G_l$ starting from $x$, 
\begin{equation*}
\frac{\pi(z_l)}{\pi(\varnothing)} = 
\frac{\mb P^{\mc G}_\varnothing[H_{z_l} < H^+_\varnothing]}
{\mb P^{\mc G}_{z_l}[H_\varnothing < H^+_{z_l}]} \;\cdot
\end{equation*}
We may couple the random walk on $\mc G_l$ with a random walk on the
tree in such a way that $\mb P^{\mc G}_\varnothing[H_{z_l} <
H^+_\varnothing] = \mb P_\varnothing[H_{\Delta_l} < H^+_\varnothing] $
and that $\mb P^{\mc G}_{z_l}[H_\varnothing < H^+_{z_l}] \le
\max_{y\in \Delta_i} \mb P_{y}[H_\varnothing < H^+_{\Delta_i}]$.  By
\eqref{31}, $\mb P_\varnothing[H_{\Delta_l} < H^+_\varnothing] \ge
(d_\varnothing l)^{-1}$. Putting together all previous estimates, we
get that on the set $\Delta_l \not = \emptyset$, 
\begin{equation}
\label{51}
|\Delta_l|^{-1} \;\le\; l \, \max_{y\in \Delta_i} 
\mb P_{y}[H_\varnothing < H^+_{\Delta_i}]
\;\le\; l \, \max_{y\in \Delta_i} 
\mb P_{y}[H_\varnothing < \infty]\;.
\end{equation}

Since there is a positive probability that a super-critical tree
survives, the probability appearing in the statement of the lemma is
bounded by $C_0 \mathcal{P} [ \, |\Delta_l| < \exp\{c_1 l\}$,
$\Delta_l \not = \emptyset]$. By \eqref{51}, this probability is
bounded by $C_0 \mathcal{P} [ \, l \, \max_{y\in \Delta_i} 
\mb P_{y}[H_\varnothing < \infty] \ge \exp\{- c_1 l\}]$, which
is bounded by $\exp\{-c_2 l\}$ by Proposition \ref{l:nodive}
\end{proof}

\begin{corollary}
\label{l:approxesc1}
For any $0<\gamma <1$, there exist positive constants $c_0$ and
$N_0\ge 1$, depending only on $\gamma$ and $\lambda$, such that for
all $N\ge N_0$,
\begin{equation*}
\mathcal{P} \Big[ \big|\tfrac{1}{v_\varnothing} 
- \tfrac{1}{v'_\varnothing} \big| \geq {d_\varnothing} N^{-c_0} \Big| 
|\mathcal{T}| = \infty \Big] \;\leq\; N^{-c_0} \;,
\end{equation*}
where $d_\varnothing$ represents the degree of $\varnothing$ and
$v'_\varnothing = \mb P_\varnothing [{H}^+_\varnothing >
H_{B(\varnothing,\gamma \log N)^c}]$.
\end{corollary}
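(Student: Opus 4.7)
The strategy is to show that the Green's function interpretation of $1/v_\varnothing$ makes the difference $1/v_\varnothing - 1/v'_\varnothing$ equal to the expected number of visits to $\varnothing$ made \emph{after} the random walk first leaves $B(\varnothing,\gamma\log N)$. By the strong Markov property, this quantity is controlled by $\max_{y\in\Delta_l}\mathbb{P}_y[H_\varnothing<\infty]$, which Proposition~\ref{l:nodive} bounds exponentially in $l$ with high $\mathcal{P}$-probability. The transient lower bound \eqref{31} then converts this into the claimed estimate.

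More concretely, set $l=\gamma\log N$ and write by the strong Markov property at $H_{\Delta_l}$,
\begin{equation*}
v'_\varnothing - v_\varnothing \;=\;
\sum_{y\in\Delta_l} \mb P_\varnothing\big[X_{H_{\Delta_l}}=y\,,\,H^+_\varnothing > H_{\Delta_l}\big]\,
\mb P_y[H_\varnothing<\infty] \;\le\; v'_\varnothing \cdot \epsilon_l,
\end{equation*}
where $\epsilon_l := \max_{y\in\Delta_l} \mb P_y[H_\varnothing<\infty]$. In particular $v_\varnothing \geq v'_\varnothing(1-\epsilon_l)$, and therefore
\begin{equation*}
\frac{1}{v_\varnothing} - \frac{1}{v'_\varnothing}
\;=\; \frac{v'_\varnothing - v_\varnothing}{v_\varnothing v'_\varnothing}
\;\le\; \frac{\epsilon_l}{(1-\epsilon_l)\, v'_\varnothing}.
\end{equation*}

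The next step is to invoke Proposition~\ref{l:nodive}: on an event $\Sigma_N$ of $\mathcal{P}$-probability at least $1 - e^{-c_2 l} = 1 - N^{-c_2\gamma}$ we have $\epsilon_l \le e^{-c_1 l} = N^{-c_1\gamma}$, which for $N$ large forces $\epsilon_l/(1-\epsilon_l)\le 2e^{-c_1 l}$. Combining this with the universal lower bound $v'_\varnothing \ge (d_\varnothing l)^{-1}$ from \eqref{31}, we obtain on $\Sigma_N$,
\begin{equation*}
\frac{1}{v_\varnothing} - \frac{1}{v'_\varnothing}
\;\le\; 2\, d_\varnothing\, l \, e^{-c_1 l}
\;=\; 2\gamma\, d_\varnothing\, (\log N)\, N^{-c_1\gamma}.
\end{equation*}
Choosing $c_0 < \min(c_1,c_2)\gamma$, the right-hand side is at most $d_\varnothing N^{-c_0}$ for all $N \ge N_0(\gamma,\lambda)$, and $\mathcal{P}[\Sigma_N^c] \le N^{-c_0}$.

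Finally, the conditioning on $|\mathcal{T}|=\infty$ is handled for free: since $\lambda>1$, $\mathcal{P}[|\mathcal{T}|=\infty]=1-q$ is a fixed positive constant, so $\mathcal{P}[\Sigma_N^c \mid |\mathcal{T}|=\infty] \le (1-q)^{-1} N^{-c_0}$, which after shrinking $c_0$ slightly remains bounded by $N^{-c_0}$. There is really no deep obstacle here; the only mild subtlety is that the bound \eqref{31} and Proposition~\ref{l:nodive} must be applied consistently on the same event, and one must check that the factor $d_\varnothing l = d_\varnothing\gamma\log N$ coming from the flow bound is absorbed by the exponential decay $e^{-c_1 l}$, which requires only $c_0<c_1\gamma$.
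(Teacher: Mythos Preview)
Your proof is correct and follows essentially the same route as the paper: both apply the strong Markov property upon exiting the ball to get $v_\varnothing \ge v'_\varnothing(1-\epsilon_l)$ with $\epsilon_l = \max_{y}\mb P_y[H_\varnothing<\infty]$, invoke Proposition~\ref{l:nodive} to bound $\epsilon_l$ by $N^{-c_1\gamma}$ with high probability, and then use the flow bound \eqref{31} on $v'_\varnothing$ to absorb the resulting $d_\varnothing\,\gamma\log N$ factor. The only cosmetic slip is that the strong Markov decomposition should be taken at $H_{B^c}$ (landing in $\partial B$) rather than at $H_{\Delta_l}$ for the displayed \emph{equality} to hold as written; since $\max_{y\in\partial B}\mb P_y[H_\varnothing<\infty]\le \epsilon_l$ on a tree, the subsequent inequality and the rest of the argument are unaffected.
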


\begin{proof}
Fix $0<\gamma < 1$ and an infinite tree $\mc T$. To keep notation
simple, let $B = B(\varnothing,\gamma \log N)$ and let $\partial B$ be
the set of points in $B^c$ which have a neighbor in $B$. By the strong
Markov property, 
\begin{equation*}
\mb P_\varnothing [{H}^+_\varnothing > H_{B^c}]
\;\geq\; \mb P_\varnothing [{H}^+_\varnothing = \infty]
\;\geq\; \mb P_\varnothing [{H}^+_\varnothing > H_{B^c}] 
\inf_{x \in \partial B} \mb P_x [{H}^+_\varnothing = \infty]\;.
\end{equation*}
Inverting these terms, we obtain
\begin{equation*}
0\;\le\; \frac{1}{v_{\varnothing}'} - \frac{1}{v_\varnothing}
\;\leq \; \frac{1}{v_\varnothing'} 
\Big(\frac{1}{\inf_{x \in \partial B} \mb P_x[H_\varnothing = \infty]} 
-1\Big)\;.
\end{equation*}
By Proposition~\ref{l:nodive} with $l = \gamma \log N$, there exists
constants $c_1$, $c_2>0$, depending on $\lambda$, such that on a set
with probability at least $1- N^{-\gamma c_2}$ the previous infimum is
bounded below $1- N^{- \gamma c_1}$. Since $(1-x)^{-1} \leq 1 + 2x$
for $x \in (0,1/2)$, there exists $N_0 = N_0(\gamma, \lambda)$ such
that for $N\ge N_0$,
\begin{equation*}
\Big | \frac{1}{v_{\varnothing}'} - \frac{1}{v_\varnothing} \Big| 
\;\leq\; \frac 2{N^{\gamma c_1}}\, \frac 1{v'_{\varnothing}}\;\cdot 
\end{equation*}
Estimate \eqref{31} permits to conclude the proof of the corollary,
changing the values of the exponents if necessary.
\end{proof}

\begin{corollary}
\label{c:esc}
Let $\mathcal{T}$ be a Galton-Watson tree with Poisson{\rm
  ($\lambda$)} offsprings, $\lambda>1$. Then, there exist finite
constants $c_0$, $C_0$ and $s_0<\infty$, depending only on $\lambda$,
such that
\begin{equation*}
\mathcal{P} \big[ (v_\varnothing)^{-1} \geq s \,\big|\, 
|\mathcal{T}| = \infty \big] \;\leq\; C_0 \, \exp \{-c_0 \sqrt{s}\}
\end{equation*}
for all $s \geq s_0$.
\end{corollary}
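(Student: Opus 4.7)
\medskip
\noindent\emph{Proposed plan.} The proof will combine Proposition~\ref{l:nodive} with the flow-based lower bound \eqref{31} and the super-exponentially decaying Poisson tail of the root degree. The key heuristic is that if every vertex in the $l$-th generation $\Delta_l$ has small return probability to $\varnothing$, then $v_\varnothing$ is comparable to the local escape probability $v^{(l-1)}_\varnothing$, which in turn is bounded below by $(d_\varnothing(l-1))^{-1}$ via \eqref{31}. Balancing the resulting two tails (a Poisson tail for $d_\varnothing$ and the exponential tail from Proposition~\ref{l:nodive}) at $l\sim\sqrt{s}$ then delivers the stretched-exponential bound.

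Fix $l\geq 1$ and consider the event $\mathcal{E}_l := \{\sup_{y\in\Delta_l}\mb P_y[H_\varnothing<\infty]\leq e^{-c_1 l}\}$ from Proposition~\ref{l:nodive}. By decomposing $v_\varnothing=\mb P_\varnothing[H^+_\varnothing=\infty]$ according to the first visit to $\Delta_l$, applying the strong Markov property at $H_{\Delta_l}\wedge H^+_\varnothing$, and using the tree identity $H_{B(\varnothing,l-1)^c}=H_{\Delta_l}$ (together with $\Delta_l\neq\varnothing$ a.s.\ on $\{|\mathcal{T}|=\infty\}$), one obtains on $\mathcal{E}_l\cap\{|\mathcal{T}|=\infty\}$
\[
v_\varnothing \;\geq\; \mb P_\varnothing[H_{\Delta_l}<H^+_\varnothing]\,\inf_{y\in\Delta_l}\mb P_y[H_\varnothing=\infty] \;\geq\; (1-e^{-c_1 l})\,v^{(l-1)}_\varnothing \;\geq\; \frac{1-e^{-c_1 l}}{d_\varnothing(l-1)},
\]
where the last inequality invokes \eqref{31}. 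Choosing $l$ large enough that $1-e^{-c_1 l}\geq 1/2$ yields the pointwise bound
\[
v_\varnothing^{-1}\;\leq\;2\,d_\varnothing\,l \qquad \text{on } \mathcal{E}_l\cap\{|\mathcal{T}|=\infty\}.
\]

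Taking $l=\lceil\sqrt{s}\rceil$ and combining the pointwise estimate with a union bound yields
\[
\mathcal{P}\bigl[v_\varnothing^{-1}\geq s,\ |\mathcal{T}|=\infty\bigr] \;\leq\; \mathcal{P}[\mathcal{E}_l^c] \;+\; \mathcal{P}\bigl[d_\varnothing\geq s/(2l)\bigr] \;\leq\; e^{-c_2\sqrt{s}} \;+\; \mathcal{P}\bigl[d_\varnothing\geq \tfrac{1}{3}\sqrt{s}\bigr].
\]
Since $d_\varnothing$ is $\mathrm{Poisson}(\lambda)$ with tail $\mathcal{P}[d_\varnothing\geq k]\leq(e\lambda/k)^{k}$ for $k>\lambda$, the second term decays faster than any $e^{-c\sqrt{s}}$; dividing through by $\mathcal{P}[|\mathcal{T}|=\infty]>0$ (positive by super-criticality) gives the announced conditional bound, with $c_0$ any constant strictly smaller than $c_2$.

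The one point that deserves care is the strong Markov decomposition leading to the pointwise bound: it relies on the tree identification $H_{B(\varnothing,l-1)^c}=H_{\Delta_l}$ and on $\Delta_l$ being almost surely non-empty under $\{|\mathcal{T}|=\infty\}$ (which is automatic for a Galton--Watson tree with finite offspring mean, since infinitely many vertices force infinitely many non-empty generations). Once this decomposition is in place, the remainder is a routine balancing of the two explicit exponential rates provided by Proposition~\ref{l:nodive} and the Poisson tail.
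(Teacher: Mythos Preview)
Your proof is correct and follows essentially the same approach as the paper: both use the strong Markov decomposition $v_\varnothing \ge v_\varnothing^{(l)} \cdot \inf_{y\in\Delta_l}\mb P_y[H_\varnothing=\infty]$, apply the flow bound \eqref{31} to the first factor, control the second via Proposition~\ref{l:nodive}, and balance at $l\sim\sqrt{s}$ against the Poisson tail of $d_\varnothing$. The only cosmetic difference is that the paper splits the probability using a fixed threshold $1/2$ for the return probability, whereas you work pointwise on the good event $\mathcal{E}_l$; the content is the same.
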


\begin{proof}
Since $\mathcal{T}$ is super-critical, the probability appearing in the
statement of the lemma is bounded by $C_3 \, \mathcal{P} [
(v_\varnothing)^{-1} \geq s \,,\, |\mathcal{T}| = \infty ]$ for some
finite constant $C_3$ depending only on $\lambda$. Fix an integer $n
\geq 1$. By the strong Markov property, $v_\varnothing$ is bounded
below by $\mb P_\varnothing[H_{B^c} \leq H_\varnothing^+] \inf_{y\in
  B^c} \mb P_y[ H_\varnothing = \infty]$, where
$B=B(\varnothing,n)$. Therefore, $\mathcal{P} [ (v_\varnothing)^{-1}
\geq s \,,\, |\mathcal{T}| = \infty ]$ is less than or equal to
\begin{equation*}
\mathcal{P} \big[\mb P_\varnothing[H_{B^c} 
\leq H_\varnothing^+]^{-1} \geq s/2, |\mathcal{T}| = \infty \big] 
\;+ \; \mathcal{P} \big[ 
\inf_{y\in B^c} P_y[ H_\varnothing = \infty] \leq 1/2 \big] \;.
\end{equation*}
By \eqref{31}, $\mb P_\varnothing[H_{B^c} \leq H_\varnothing^+] \ge
(d_\varnothing n)^{-1}$. The previous expression is thus bounded by
\begin{equation*}
\mathcal{P}[d_\varnothing n \geq s/2] \;+\; 
\mathcal{P} \big[\sup_{y\in B^c} P_y[H_\varnothing < \infty] 
\geq 1/2 \big] \;.
\end{equation*}
Set $ n=\sqrt{s}$, recall that $d_\varnothing$ has a
Poisson($\lambda$) distribution.  Apply an exponential Tchebychev
inequality to estimate the first term. By Proposition~\ref{l:nodive}
with $l = \sqrt{s}$, the second term is bounded by $\exp\{-c_2
\sqrt{s}\}$ provided $s$ is large enough.
\end{proof}

The following corollary allows us to bound the quantity $\epsilon_N$
appearing in \eqref{e:c1} and \eqref{e:c3}.

\begin{corollary}
\label{l:approxesc2}
Fix an arbitrary vertex $y \in \{1, \dots, N \}$ and $0<\gamma <
(3\log \lambda)^{-1}$. Then, there exists positive constants $c_0$
and $N_0\ge 1$, depending only on $\gamma$ and $\lambda$, such that
for all $N\ge N_0$,
\begin{equation*}
\mathbb{P} \Big[ \sup_{z \in B(y, \gamma \log N)^c} 
\mb P_z[ H_y \leq \log^4 N ] > N^{-c_0} \Big] 
\;\leq\;  N^{-c_0}\;.
\end{equation*}
\end{corollary}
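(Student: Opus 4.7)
The plan is to combine the local isomorphism between $\ms G_N$ and a Galton--Watson tree $\mc T$ provided by Proposition~\ref{l:isom} with the decay estimate for hitting probabilities on $\mc T$ furnished by Proposition~\ref{l:nodive}. Set $r=\gamma\log N$, $T=\log^4 N$ and $B=B(y,r)$. I would first invoke Proposition~\ref{l:isom} at the slightly larger radius $r+1$ (absorbing the resulting loss into $C_1$): with $\mb P$-probability at least $1-C_1 N^{3\gamma\log\lambda-1}$, the coupling produces an isometry $\varphi:B(y,r+1)\to B(\varnothing,r+1)$. The extra layer forces $\deg_{\ms G_N}(v)=d_{\mc T}(\varphi(v))$ for every $v\in B$, so the walk $X^N$ restricted to $B$ has exactly the same transition kernel as the simple random walk on $\mc T$ restricted to $B(\varnothing,r)$ via $\varphi$; in particular the two walks can be coupled until the first exit of $B$.

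The next step is a reduction to the inner boundary $\partial B=\{v\in B:d(v,y)=r\}$. Any trajectory from $z\in B^c$ to $y$ must cross $\partial B$, so strong Markov at the first entrance time gives
\[
\mb P_z[H_y\le T]\;\le\;\max_{w\in\partial B}\mb P_w[H_y\le T].
\]
I would then split the walk issued from $w\in\partial B$ into excursions separated by the successive return times $0=T_0<T_1<\cdots$ to $\partial B$. Since $H_y$ lies in some interval $(T_k,T_{k+1})$ with $k\le T$, applying strong Markov at each $T_k$ and summing yields
\[
\mb P_w[H_y\le T]\;\le\;T\cdot q,\qquad q\;=\;\max_{w'\in\partial B}\mb P_{w'}[H_y<T_1].
\]

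To bound $q$, observe that from $w'\in\partial B$ the first step either enters $B^c$ (in which case the walk cannot hit $y\in B^\circ$ before returning to $\partial B$, contributing $0$ to $q$) or moves to the unique neighbour of $w'$ lying in $B^\circ$, namely its tree-parent $u\in\Delta_{r-1}$. Transferring to $\mc T$ via $\varphi$ until the next visit to $\partial B$,
\[
q\;\le\;\max_{u\in\Delta_{r-1}}\mb P_u^{\mc T}[H_\varnothing<H_{\Delta_r}]\;\le\;\max_{u\in\Delta_{r-1}}\mb P_u^{\mc T}[H_\varnothing<\infty].
\]
Proposition~\ref{l:nodive} with $l=r-1$ bounds the rightmost quantity by $e^{c_1}N^{-c_1\gamma}$ except on an event of $\mc P$-probability at most $e^{c_2}N^{-c_2\gamma}$ (the case $\Delta_{r-1}=\varnothing$ is trivial, since then $\partial B=\varnothing$ and $y$ is disconnected from $B^c$).

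Combining these estimates, on a $\mb P$-event of probability at least $1-C(N^{3\gamma\log\lambda-1}+N^{-c_2\gamma})$ one has
\[
\sup_{z\in B^c}\mb P_z[H_y\le T]\;\le\;e^{c_1}\log^4 N\cdot N^{-c_1\gamma}.
\]
Since $3\gamma\log\lambda<1$ and $c_1,c_2>0$, choosing any $0<c_0<\min(c_1\gamma,\,c_2\gamma,\,1-3\gamma\log\lambda)$ and taking $N\ge N_0$ large enough to absorb the $\log^4 N$ factor yields the conclusion. The main obstacle is bridging the infinite-time tree estimate of Proposition~\ref{l:nodive} with a finite-time graph bound; this is resolved by the excursion argument, whose multiplicative cost $T=\log^4 N$ is dwarfed by the exponential-in-$r$ gain $\exp(-c_1 r)=N^{-c_1\gamma}$ coming from the tree estimate, provided $\gamma$ stays below $(3\log\lambda)^{-1}$ so the isomorphism of Proposition~\ref{l:isom} remains usable.
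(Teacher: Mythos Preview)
Your proof is correct and follows essentially the same route as the paper. Both arguments couple the ball to the Galton--Watson tree via Proposition~\ref{l:isom}, bound the probability that a single ``attempt'' from the inner boundary reaches $y$ by Proposition~\ref{l:nodive}, and multiply by the at most $\log^4 N$ attempts available; the only cosmetic difference is that you package the iteration as an excursion decomposition $\mb P_w[H_y\le T]\le T\cdot q$ with $q=\max_{w'}\mb P_{w'}[H_y<T_1]$, whereas the paper writes it as the recursive inequality $\sup_{z\in B^c}\mb P_z[H_y\le T]\le N^{-\gamma c_1}+\sup_{z\in B^c}\mb P_z[H_y\le T-1]$ and iterates. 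Your care in taking the isometry at radius $r+1$ so that degrees on $\partial B$ match is a point the paper leaves implicit.
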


\begin{proof}
Denote by $\partial_i A$ the internal boundary of a set $A$:
$\partial_i A = \{x\in A : d(x,A^c)=1\}$. Fix $0<\gamma < (3\log
\lambda)^{-1}$. By Propositions~\ref{l:isom} and \ref{l:nodive}, there
exist positive constants $c_1$, $c_2$ and $C_1$, depending only on
$\lambda$, such that
\begin{equation*}
\begin{split}
& \mathbb{P} \Big[ \sup_{z \in \partial_i B} 
\mb P_z[ H_y \leq H_{B^c} ] > N^{- \gamma c_1} \Big] \\
& \qquad \leq\; C_1 \, N^{c} \;+\;
\mathcal{P} \Big[ \sup_{z \in \partial_i B} 
\mb P_z[ H_\varnothing \leq H_{B^c} ] > N^{- \gamma c_1} \Big] \;
\leq\; C_1 \, N^{c}  \;+\; N^{- \gamma c_2}\;,
\end{split}
\end{equation*}
where $c = 3\gamma\log \lambda - 1$ and $B= B(y, \gamma \log N)$.

Assume that $\sup_{z \in \partial_i B} \mb P_z[ H_y \leq H_{B^c} ] \le
N^{-\gamma c_1}$. We claim that in this case
\begin{equation}
\label{30}
\sup_{z \in B^c}  \mb P_z[ H_y \leq \log^4 N ] \;\le\;
N^{-\gamma c_1} \;+\; \sup_{z \in B^c}  \mb P_z[ H_y \leq \log^4 N - 1]\;.
\end{equation}
Iterating this estimate $\log^4 N$ times, we conclude the proof of the
corollary. It is enough, therefore, to prove \eqref{30}. By the strong
Markov property, $\mb P_z[ H_y \leq \log^4 N ]$ is bounded by
$\sup_{w\in \in \partial_i B} \mb P_w[ H_y \leq \log^4 N ]$. If $\{H_y
< H_{B^c}\}$, by the initial assumption we may bound the probability
by $N^{-\gamma c_1}$. This gives the first term on the right hand side of
\eqref{30}. On the other hand, on the set $\{H_y > H_{B^c}\}$, $H_y =
H_{B^c} + H_y \circ \theta_{H_{B^c}}$ and $H_y \circ \theta_{H_{B^c}}
\le \log^4N -1$. Hence, by the strong Markov property, for every
$w\in \partial_i B$,
\begin{equation*}
\mb P_w[ H_y \leq \log^4 N \,,\, H_{B^c}<H_y ] \;\le\; 
\mb P_w[ H_{B^c}<H_y ] \sup_{z \in B^c}  \mb P_z[ H_y \leq \log^4 N -
1]\;, 
\end{equation*}
which proves \eqref{30} and the corollary.
\end{proof}

We conclude this section deriving the scaling limit of the random walk
$X^N_t$ on the giant component of the super-critical Erd\"os-R\'enyi
random graph.

\begin{theorem}
\label{t:ER}
Consider the trap model $X^N_t$ on the largest component
$\mathcal{C}_{\textnormal{max}}$ of the Erd\"os-R\'enyi random graph
with traps $W^N_x$, $x\in \mc C_{\max}$, as described in the beginning
of this section.  Assume that $\Psi_N(X^N_0)$ converges in probability
to some $k\in\bb N$. Let $\beta_N= (\mf v_\lambda
N)^{1/\alpha}$. Then,
\begin{equation*}
(\beta^{-1}_N \mb W^N, \Psi_N(X^N_{t \beta_N})) 
\quad\text{converges weakly  to}\quad  (\mb w, K_t)\;,
\end{equation*}
where $\mb w$ is the sequence defined in \eqref{10} and where, for
each fixed $\mb w$, $K_t$ is a $K$-process starting from $k$ with
parameter $(\mb Z,\mb u)$, where $Z_k = w_k/E_k$ and $u_k = D_k
E_k$. Here, $(D_k, E_k)$, $k \geq 1$ is an i.i.d.~sequence,
distributed as $(d_\varnothing, v_\varnothing)$ under $\mathcal{P}
\big[ \cdot \big| |\mathcal{T}| = \infty \big]$. The above convergence
refers to the $L^1$-topology in the first coordinate and
$d_T$-topology in the second.
\end{theorem}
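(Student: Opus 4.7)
The strategy is to verify the hypotheses \eqref{BB0}, \eqref{BB1}, and \eqref{e:c4} of Theorem~\ref{t:random} for $G_N = \mc C_{\textnormal{max}}$ with length scale $\ell_N = \gamma \log N$, $\gamma$ a fixed constant in $(0,(6\log\lambda)^{-1})$; hypothesis \eqref{B0} has already been verified in the preceding pages. Granting these three conditions, Theorem~\ref{t:random} yields the convergence of $(c_{\bb V} \mb W^N, \Psi_N(X^N_{t c_{\bb V}^{-1}}))$ to $(\mb w, K_t)$, and the deterministic scale $\beta_N = (\mf v_\lambda N)^{1/\alpha}$ differs from $c_{\bb V}^{-1}$ by a factor tending to $1$ in probability, since $c_k = k^{-1/\alpha}$ for the $\alpha$-stable case $F(t)=t$ and $|V_N|/N \to \mf v_\lambda$ by Theorem~\ref{e:unique}. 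The identification of $\mb w$ as the limit of $\beta_N^{-1}\mb W^N$ uses that the rescaled weights surviving in the giant component form a $\mf v_\lambda$-thinning of those in $\{1,\dots,N\}$, exactly compensated by the factor $\mf v_\lambda^{1/\alpha}$ inside $\beta_N$. The parameters $Z_k = w_k/E_k$ and $u_k = D_k E_k$ then come out of Theorem~\ref{t:random} automatically.

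For \eqref{BB0}, Corollary~\ref{c:volume} bounds $|B(\mf x,2\ell_N)|$ by $N^{6\gamma\log\lambda}$ outside a set of $\bb P$-probability $O(N^{6\gamma\log\lambda-1})$, and since $6\gamma\log\lambda<1$ and $\bb V_N \ge (\mf v_\lambda/2)N$ with high probability by Theorem~\ref{e:unique}, the expectation $\bb E[|B(\mf x,2\ell_N)|/\bb V_N]$ vanishes. For \eqref{BB1}, I would invoke the now standard estimate that the mixing time on the giant component of a supercritical Erd\"os--R\'enyi graph is $O(\log^2 N)$ with high $\bb P$-probability (Fountoulakis--Reed, Benjamini--Kozma--Wormald). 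Choosing $L_N\uparrow\infty$ slowly enough that $L_N t_{\textnormal{mix}}^N \le \log^4 N$ for large $N$, Corollary~\ref{l:approxesc2} bounds $\sup_{z\notin B(\mf x,\ell_N)} \mb P_z[\HH_{\mf x} \le L_N t_{\textnormal{mix}}^N]$ by $N^{-c_0}$ outside a set of $\bb P$-probability $N^{-c_0}$, so the expectation in \eqref{BB1} tends to zero.

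The substantive verification is of \eqref{e:c4}. Fix $K\ge 1$ and draw $\mf x_1,\dots,\mf x_K$ uniformly in $V_N$. Applying Corollary~\ref{c:manyballs} with some $b$ chosen so that $N^b\ge K$ and thinning down to $K$ balls, and then conditioning on each $\mf x_j \in \mc C_{\textnormal{max}}$ (equivalently on each Galton--Watson tree $\mc T_j$ being infinite), one obtains a coupling under which the rooted balls $(\mf x_j, B(\mf x_j,\ell_N))$ are pairwise disjoint and jointly isomorphic to $(\varnothing_j, B(\varnothing_j,\ell_N))$ in $K$ independent Galton--Watson trees conditioned on survival, with error probability $o(1)$. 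Set $D_j := d_{\varnothing_j}$ and $E_j := v_{\varnothing_j}$. On the good event $\deg(\mf x_j) = D_j$ automatically, and Corollary~\ref{l:approxesc1} bounds $|v_\ell(\mf x_j)^{-1} - E_j^{-1}|$ by $d_{\varnothing_j} N^{-c_0}$, a uniformly $o(1)$ quantity because the maximum of $K$ independent Poisson$(\lambda)$ variables is tight. Finally, on $\{|\mc T_1|=\infty\}$ one has $D_1\ge 1$ and $E_1\in(0,1]$ by construction, and the moment bound $E_{\mc Q}[(D_1/E_1)^2]<\infty$ follows from Cauchy--Schwarz together with the Poisson moments of $D_1$ and the exponential tail estimate $\mc P[E_1^{-1}\ge s] \le C_0 \exp\{-c_0\sqrt{s}\}$ of Corollary~\ref{c:esc}.

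The main obstacle lies in arranging the single scale $\ell_N = \gamma\log N$ so that all the asymptotic approximations cooperate simultaneously: $\ell_N$ must be large enough for the local isometry with $\mc T_j$ to drive the escape probability from $B(\mf x_j,\ell_N)$ close to $v_{\varnothing_j}$ in Corollary~\ref{l:approxesc1} and for the hitting-time bound in Corollary~\ref{l:approxesc2} to beat $L_N t_{\textnormal{mix}}^N \le \log^4 N$, yet small enough that $K$ balls of radius $2\ell_N$ remain disjoint in $\{1,\dots,N\}$ with high probability. The constraint $6\gamma\log\lambda<1$, combined with the exponential tail of $v_\varnothing^{-1}$ and the polylogarithmic upper bound on $t_{\textnormal{mix}}^N$, is what permits a uniform such choice and ultimately delivers \eqref{e:c4}.
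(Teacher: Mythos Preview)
Your overall strategy---verify \eqref{B0}, \eqref{BB0}, \eqref{BB1}, \eqref{e:c4} and invoke Theorem~\ref{t:random}---is exactly the paper's, and your treatment of \eqref{BB0} and \eqref{BB1} is essentially the same as the paper's (the paper takes $\ell_N=(\gamma/2)\log N$ and $L_N=C_0^{-1}\log^2 N$, but these are cosmetic differences).

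The gap is in your verification of \eqref{e:c4}. You write that after applying Corollary~\ref{c:manyballs} one may ``condition on each $\mf x_j \in \mc C_{\textnormal{max}}$ (equivalently on each Galton--Watson tree $\mc T_j$ being infinite)''. This parenthetical equivalence is precisely the nontrivial point, and it is \emph{not} a consequence of the coupling in Corollary~\ref{c:manyballs}: that coupling only controls the first $\gamma\log N$ generations of the tree and the ball of radius $\gamma\log N$ in $\ms G_N$. Whether $\mc T_j$ survives is a tail event beyond generation $\gamma\log N$; whether $z_j\in\mc C_{\max}$ is a global connectivity property of $\ms G_N$. Neither is locally determined, so the coupling does not automatically pair them. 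Concretely, one must rule out (i) a point $z_j\in\mc C_{\max}$ whose coupled tree happens to die out after generation $\gamma\log N$, and (ii) an infinite tree $\mc T_j$ whose coupled ball sits in a small component of $\ms G_N$.

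The paper handles this by sampling $N^b$ points $z_1,\dots,z_{N^b}$ uniformly in $\{1,\dots,N\}$, letting $\mf x_1,\dots,\mf x_K$ be the first $K$ that land in $\mc C_{\max}$ and $\mf y_1,\dots,\mf y_K$ the indices of the first $K$ infinite trees, and then proving $\{\mf x_i=\mf y_i\text{ for all }i\}$ holds with high probability via a chain of events $\Sigma_{N,0},\dots,\Sigma_{N,5}$. For direction (i): on $\{\diam(\mc C_{\max})\ge\gamma\log N\}$ a point in $\mc C_{\max}$ has a ball reaching generation $\gamma\log N$; by Corollary~\ref{c:bigB} the coupled tree then has $\ge N^\delta$ vertices at that generation, and since each spawns an independent supercritical subtree, survival occurs with probability $1-o(1)$. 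For direction (ii): again by Corollary~\ref{c:bigB} an infinite tree has $\ge N^\delta$ vertices in its first $\gamma\log N$ generations, hence so does the coupled ball in $\ms G_N$, and by Theorem~\ref{e:unique} any component of size exceeding $c_0\log N$ is $\mc C_{\max}$. Your proposal needs this argument (or an equivalent one) to turn the asserted equivalence into a proof.
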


\begin{proof}
We need to establish conditions \eqref{B0}--\eqref{e:c4} for the above
sequence of graphs and to apply Theorem~\ref{t:random}.  Condition
\eqref{B0} has been proven in the beginning of this section.  The main
difficulty in checking the remaining hypotheses comes from the fact
that we are dealing with the giant component $\mathcal{C}_{\max}$,
which has a random size, instead of the whole set $\{1, \dots, N\}$ as
in the above lemmas and propositions.

In order to prove \eqref{BB0}, let $\ell_N = (\gamma/2) \log N$ with
$\gamma$ satisfying the conditions of
Corollary~\ref{c:manyballs}. Since the term inside the expectation in
\eqref{BB0} is bounded by one, the expectation in \eqref{BB0} is
less than or equal to
\begin{equation*}
\begin{split}
& \mathbb{E} \Big[\frac 1{|\mathcal{C}_{\max}|} 
\sum_{x \in \mathcal{C}_{\max}} \frac{|B(x,2\ell_N)|}
{|\mathcal{C}_{\max}|} \Big]\\  
& \qquad \leq \; \mathbb{P} \big[ \, |\mathcal{C}_{\max}| 
< (\mf v_\lambda/2) N \, \big]  \;+\;
\frac{4}{(\mf v_\lambda N)^2} 
\mathbb{E} \Big[ \sum_{x =1}^N |B(x,2\ell_N)| \Big]\;.
\end{split}
\end{equation*}
By Theorem~\ref{e:unique} the first term vanishes as
$N\uparrow\infty$, while by Proposition~\ref{l:isom} and
Corollary~\ref{c:volume} the second term vanishes. This proves that
condition \eqref{BB0} is fulfilled.

By \cite{Benj, FR, FR2}, with high probability the mixing time of a
random walk on $\mathcal{C}_{\textnormal{max}}$ is less than or equal
to $C_0 \log^2 N$ for some finite constant $C_0$. Choosing $L_N =
C_0^{-1} \log^2 N$, the hypothesis \eqref{BB1} becomes a direct
consequence of Corollary~\ref{l:approxesc2}. It is indeed enough to
condition the event appearing in the statement of Corollary
\ref{l:approxesc2} on the set that $y$ belongs to
$\mathcal{C}_{\textnormal{max}}$ and to recall from Theorem
\ref{e:unique} that the giant component has a positive density with
probability converging to $1$.

It remains to check \eqref{e:c4}. Let $Q'_N$ be the coupling between
the random graph $\ms G_N$ and $N^b$ independent Galton-Watson trees
$\mathcal{T}_i$ constructed in Corollary~\ref{c:manyballs}. We assume
that these trees are the first $N^b$ trees of an infinite
i.i.d. sequence of Galton-Watson trees.

Fix $K \geq 1$ and let $\mf x_1, \mf x_2, \dots, \mf x_K$ be the first
$K$ points $z_i$ which belongs to $\mc C_{\max}$: $\mf x_1 = z_j$ if
$z_j\in \mc C_{\max}$ and $z_i\not\in \mc C_{\max}$ for $1\le i<j$,
and so on. It is clear that $\mf x_1, \dots, \mf x_K$ is uniformly
distributed among all possible choices and that the probability of not
finding $K$ points in $\mc C_{\max}$ among $N^b$ points uniformly
distributed in $\ms V_N$ converges to $0$.

Let $\mf y_j$, $1\le j\le K$, be the first $K$ indices of trees
$\mathcal{T}_i$ which are infinite and let $(D_j,E_j)$ be the degree
and the escape probabilities $(d_\varnothing, v_\varnothing)$ in
$\mathcal{T}_{\mf y_j}$. Note that the vectors $(D_j,E_j)$ are
independent and identically distributed and that $Q'_N[ (D_1,E_1) \in
A] = \mathcal{P} \big[ (d_\varnothing, v_\varnothing) \in A \big
||\mathcal{T}| = \infty \big]$. In particular, by Corollary
\ref{c:esc} and Schwarz inequality the last two conditions in
\eqref{e:c4} are fulfilled.

Let $\bb A_N$ be the event ``the graphs $(\mf x_i, B(\mf x_i, \gamma
\log N))$, $1\le i\le K$, are isometric to the graphs $(\mf y_i, B(\mf
x_i, \gamma \log N ))$, $1\le i\le K$''. In view of
Corollary~\ref{l:approxesc1}, on the set $\bb A_N$, the first two
condition in \eqref{e:c4} are fulfilled. To conclude the proof of
condition \eqref{e:c4} it remains to show that
\begin{equation}
\label{e:allisom}
\lim_{N\to\infty} \bb P \big[ \bb A_N^c \big]\;=\; 0\;.
\end{equation}

We define six sets $\Sigma_{N,j}$, $0\le j\le 5$, such that
$\cap_{0\le j\le 5} \Sigma_{N,j} \subset \bb A_N$ and then prove that
each of this set has asymptotic full measure. Recall that $b$ and
$\gamma$ satisfy the assumptions of Corollary \ref{c:manyballs} and
let $\Sigma_{N,0} = \ms {B}$. Since $\{(\mf x_i)_{i = 1}^K = (\mf
y_i)_{i = 1}^K \} \cap \ms B \subset \bb A_N$, we need to impose
further restriction to guarantee that $\mf x_i = \mf y_i$, $1\le i\le
K$.

Let $\Sigma_{N,1} = \{\diam (\mathcal{C}_{\max}) \ge \gamma \log N\}$,
let $\Sigma_{N,2} = \{|\{z_1,\dots, z_{\log N}\} \cap
\mathcal{C}_{\max}| \ge K\}$ and let $\Sigma_{N,3}$ be the event
``every three $\mc T_i$, $1\le i\le \log N$, with diameter greater or
equal to $\gamma \log N$ survives''. On $\Sigma_{N,0} \cap
\Sigma_{N,1} \cap \Sigma_{N,2}\cap \Sigma_{N,3}$, the graphs $(\mf
x_i, B(\mf x_i, \gamma \log N))$, $1\le i\le K$, are coupled to
infinite trees.

It remains to guarantee that there is no infinite tree coupled with a
graph $(z_i, B(z_i, \gamma \log N))$ whose root $z_i$ does not belong
to $\mc C_{\max}$. Let $\Sigma_{N,4}$ be the event ``Every tree $\mc
T_i$, $1\le i\le \log N$, with diameter greater of equal than $\gamma
\log N$ has at least $N^\delta$ elements among the first $\gamma \log
N$ generations'', and let $\Sigma_{N,5}$ be the event ``Every
connected subset of $\ms V_N$ with more than $N^\delta$ elements is
contained in $\mc C_{\max}$''. On $\Sigma_{N,0} \cap \Sigma_{N,4} \cap
\Sigma_{N,5}$, all infinite trees $\mc T_i$, $1\le i\le \log N$, are
coupled with graphs whose root belongs to $\mc C_{\max}$.

Putting together the previous assertions, we get that $\cap_{0\le j\le
  5} \Sigma_{N,j} \subset \bb A_N$, as claimed. We next show that
each event introduced above has asymptotic full probability. By
Corollary \ref{c:manyballs}, $\bb P[\Sigma_{N,0}^c]$ vanishes, by
Theorem \ref{e:unique} and by Corollary \ref{c:volume} $\bb
P[\Sigma_{N,1}^c]$, and by Theorem~\ref{e:unique}, $\bb
P[\Sigma_{N,2}^c]$ vanishes.  By Corollary \ref{c:bigB}, $\bb
P[\Sigma_{N,4}^c]$ vanishes for some $\delta>0$, and by Theorem
\ref{e:unique} $\bb P[\Sigma_{N,5}^c]$ vanishes.  Finally, by
Corollary \ref{c:bigB}, there exists $\delta = \delta (\gamma,
\lambda)>0$ with the following property. A tree which has diameter
$\gamma \log N$ has at least $N^\delta$ elements at generation $\gamma
\log N$ with probability converging to $1$. Since from each element of
the generation $\gamma \log N$ descends an independent super-critical
tree which has positive probability to survive, $\bb
P[\Sigma_{N,3}^c]$ vanishes
\end{proof}

\end{document}